\newtheorem{tm}{Theorem}[section]
\newtheorem{lemma}[tm]{Lemma}
\newtheorem{theorem}{Theorem}[section]
\newtheorem{corollary}[theorem]{Corollary}
\newtheorem{definition}[theorem]{Definition}
\newtheorem{example}[theorem]{Example}
\newtheorem{proposition}[theorem]{Proposition}
\newtheorem{remark}[theorem]{Remark}
\newcommand{\beqa}{\begin{eqnarray*}}
\newcommand{\eeqa}{\end{eqnarray*}}
\DeclareMathOperator*{\supp}{supp}
\newcommand{\field}[1]{\mathbb{#1}}
\newcommand{\bR}{\field{R}}    
\newcommand{\bN}{\field{N}}    
\newcommand{\bZ}{\field{Z}}    
\newcommand{\bC}{\field{C}}    
 \def\cF{\mathcal{F}}       
 \def\cS{\mathcal{S}}
 \def\cD{\mathcal{D}}
 \def\cB{\mathcal{B}}
 \def\cG{\mathcal{G}}
 \def\cM{\mathcal{M}}
 \def\cA{\mathcal{A}}
 \def\cI{\mathcal{I}}
 \def\cC{\mathcal{C}}
\def\rd{\bR^d}
\def\rdd{{\bR^{2d}}}
\def\R{\right)}
\def\<{\left<}
\def\>{\right>}
\def\mv1{M_v^1}
\newcommand{\norm}[1]{\lVert#1\rVert}
\def\R{\mathbb{R}}
\def\Ren{\mathbb{R}^d}
\def\Sn2{S_{2}(L^{2}(\Ren))}
\def\S1{S_{1}(L^{2}(\Ren))}
\def\sig00{\sigma_{0,0}}
\newcommand{\A}{\mathcal{A}}
\begin{document}

\title[Metaplectic Wigner distributions]{Metaplectic Wigner distributions}

\author{Gianluca Giacchi}
\address{Università di Bologna, Dipartimento di Matematica,  Piazza di Porta San Donato 5, 40126 Bologna, Italy; Institute of Systems Engineering, School of Engineering, HES-SO Valais-Wallis, Rue de l'Industrie 21, 1950 Sion, Switzerland; Lausanne University Hospital and University of Lausanne, Lausanne, Department of Diagnostic and Interventional Radiology, Rue du Bugnon 46, Lausanne 1011, Switzerland. The Sense Innovation and Research Center, Avenue de Provence 82
1007, Lausanne and Ch. de l’Agasse 5, 1950 Sion, Switzerland.}

\email{gianluca.giacchi2@unibo.it}

\subjclass[2010]{81S30, 47G30, 42B35, 35B65} \keywords{Wigner distribution, metaplectic representation, modulation spaces, pseudodifferential operators, uncertainty principles}

\date{}

\begin{abstract}

Metaplectic Wigner distributions were recently investigated in \cite{Bayer, CR2022, CR2021, CGR2022, Z, ZH} as natural generalizations of the classical Wigner distribution, and provide a wide class of time-frequency representations that exploits the structure of the symplectic group. This work serves as a survey on metaplectic Wigner distributions and their applications to the time-frequency analysis of modulation spaces and pseudodifferential operators, topics that are all still poorly understood. We also give some new results, generalizing Lieb's uncertainty principle to the so-called matrix Wigner distributions and proving the continuity on $M^p_{v_s}(\rd)$ spaces of metaplectic pseudodifferential operators with symbols in $M^{p'}_{v_{-s}}(\rdd)$. 

\end{abstract}

\maketitle

\section{Introduction}

Time and frequency are the main mathematical quantities to describe signals: while the importance of time evolution needs not to be justified, the knowledge of the frequency content of signals, which is encoded by the Fourier transform, is important for many practical reasons: information on the frequency source, propagation of waves through a medium and attenuation are only a few prototypical examples of physical phenomena that can be mainly explained by an insight on the frequency content of signals, cf. \cite{Cohen}. The aim of \textit{time-frequency analysis} is to describe how the frequency content of a signal evolves and to describe the time and the frequency behavior of signals simultaneously, cf. \cite{CRBook}. 

Many quantities can be used to localize signals in the time and frequency domains separately. For instance, one can measure where their energy is mostly \textit{concentrated}. However, whenever a meaningful definition of \textit{concentration} is given, uncertainty principles arise, providing mathematical obstacles to the well localization of time and frequency content of signals, cf. \cite{GrochenigBook}. Basically, shorter signals have more spread frequencies and compactly supported signals cannot be band-limited, i.e. their Fourier transform cannot be compactly supported as well, cf. \cite{PW}. \\

\textit{Time-frequency representations} come from the natural need to treat signals simultaneously in time and frequency, recovering local information that are partially lost by considering them separately. Just as a signal is decomposed into its fundamental frequencies by the Fourier transform, the \textit{short-time Fourier transform} (STFT) uses time-frequency shifts to write signals as a superposition of phase-space atoms. Basically, the idea behind its definition is simple: local frequency information can be obtained by taking the Fourier transform of a signal that is cropped on small intervals of time through a so-called \textit{window function}:
\begin{equation}\label{STFT}
	V_gf(x,\xi)=\int_{\rd}f(t)\overline{g(t-x)}e^{-2\pi i\xi\cdot t}dt, \qquad (f,g\in L^2(\rd)).
\end{equation}

The \textit{Wigner distribution} is another time-frequency representation and its definition comes from quantum mechanics, cf. \cite{Wigner, Ville}. It consists of taking the Fourier transform of the product between a past contribution and a future contribution of the signal as follows:
\[
	Wf(x,\xi)=\int_{\rd}f(x+{t}/{2})\overline{f(x-{t}/{2})}e^{-2\pi i\xi\cdot t}dt, \qquad (f\in L^2(\rd)).
\]
The theory of the STFT, of the Wigner distribution, and of many other important time-frequency representations, such as the \textit{spectrogram} and the \textit{ambiguity function}, can be found in any time-frequency analysis reference book, cf. \cite{Cohen, CRBook, FollandHAPS, GrochenigBook}.  \\

The Wigner distribution plays a central role in the theory of time-frequency representations and the reason is that it enjoys the best time-frequency resolution properties, but its non-linearity makes its analysis more complicated. In \cite{SASK}, the authors analyze the mathematical properties of some of the best known time-frequency representations. Along the years, modifications of the Wigner distribution were proposed to overcome the flaws of the Wigner distribution. Weighting differently the past and the future contributions in the definition of Wigner distribution leads to the so-called \textit{$\tau$-Wigner distributions}, cf. \cite{BDO}; or convolving the Wigner distribution with smooth kernels (\textit{Cohen's class}) may lead to more regular, or even (possibly non-negative) real-valued representations, cf. \cite{Cohen}.

Most of these representations can be related to metaplectic operators using many different approaches. In \cite{CR2022}, the authors use the metaplectic definition of the STFT and $\tau$-Wigner distributions to define the more general $\cA$-\textit{Wigner distributions}, or \textit{metaplectic Wigner distributions}, whose properties have been studied further in \cite{CR2021, CGR2022}. 
The authors suggest that metaplectic operators may turn to be the main protagonists of time-frequency analysis, revealing its true nature. They investigate the properties that metaplectic Wigner distribution shall satisfy to characterize modulation spaces, characterize which of them belong to the Cohen's class and apply the results they get to pseudodifferential operators and Shr\"odinger equations.

The so-called \textit{matrix Wigner distributions} and the related uncertainty principles are also considered in \cite{Bayer, CT2019, Z}. The symplectic matrices giving rise to matrix Wigner distributions are called \textit{totally Wigner-decomposable} in \cite{CGR2022}, where these distributions are studied in their "cross" formulation. Moreover, in \cite{ZH} the authors exploit metaplectic operators in a different way and define \textit{free metaplectic Wigner distributions} using the properties of free symplectic matrices, and study the uncertainty principles that arise in this context. \\

This work serves as a survey on metaplectic Wigner distribution theory as developed by E. Cordero, G. Giacchi, Y. He, L. Rodino and Z. Zhang in their recent works cited above and most of the results mentioned in this work have been proved in \cite{Bayer, CT2019, CR2021, CR2022, CGR2022}. In Section \ref{sec:2} we establish the preliminaries and the notation that will be used in the following; Section \ref{sec:MWD} is devoted to the definition and the first properties of metaplectic Wigner distribution as defined in \cite{CR2022}. In Section \ref{sec:CC} the Cohen's class of metaplectic Wigner distributions is characterized in terms of \textit{covariance}. In \cite{CR2021}, the authors theorize that the so-called \textit{shift-invertibility} condition is the key to characterize modulation spaces in terms of metaplectic Wigner distributions. In Section \ref{sec:MS} we report on the state-of-the art on this conjecture, as developed in \cite{CR2021, CGR2022}. Section \ref{sec:UP} is devoted to the generalization of Lieb's uncertainty principle to a particular class of metaplectic Wigner distribution, while in Section \ref{sec:MPO} we report on metaplectic pseudodifferential operators and prove that metaplectic pseudodifferential operators with symbols in certain weighted modulation spaces can be extended continuously on weighted modulation spaces. As a byproduct, we obtain a Calderon-Vaillancourt type theorem for metaplectic pseudodifferential operators.

\section{Preliminaries and notation}\label{sec:2}
\subsection{Multi-indices notation}
For a vector $\alpha\in\bN_0^d$ and $x\in\rd$, we set $x^\alpha:=x_1^{\alpha_1}\cdot\ldots\cdot x_d^{\alpha_d}$ and $|\alpha|:=\alpha_1+\ldots+\alpha_d$. If $f:\rd\to\bC$, we denote with $D^\alpha f(x):=\frac{\partial^{|\alpha|}}{\partial x_1^{\alpha_1}\ldots\partial x_d^{\alpha_d}}f(x)$.

\subsection{Test functions and distributions}
	We set
	\[
		\cS(\rd):=\Big\{f\in\cC^\infty(\rd) \ : \ \rho_{\alpha,\beta}(f):=\sup_{x\in\rd}|x^\alpha D^\beta f(x)|<\infty \ \forall \alpha,\beta\in\bN_0^d\Big\}
	\]
	the space of Schwarz functions, which is a topological vector space with the initial topology associated to the family of seminorms $\{\rho_{\alpha,\beta}\}_{\alpha,\beta\in\bN_0}$. The topological dual of $\cS(\rd)$ is called the space of tempered distribution and it is equipped with the weak-$\ast$ topology. For all $0<p\leq\infty$, $ \cS(\rd)\hookrightarrow L^p(\rd)\hookrightarrow  \cS'(\rd)$ and the inclusions are dense if $p\neq\infty$.
	
	We consider the sesquilinear duality pairing between $\cS'(\rd)$ and $\cS(\rd)$, that is the unique extension to $\cS'(\rd)\times\cS(\rd)$ of the sesquilinear inner product of $L^2(\rd)$. Namely, for all $(f,\varphi)\in\cS'(\rd)\times \cS(\rd)$,
	\[
		\langle f,\varphi\rangle:=f(\bar\varphi).
	\] 
	The reader may refer to \cite{Folland} for the main definitions and properties of operations on $\cS'$, such as differentiation and convolution. We will denote with $\langle\cdot,\cdot\rangle_{L^2(\bR^k)}$ the sesquilinear inner product on $L^2(\bR^k)$ and write simply $\langle\cdot,\cdot\rangle$ instead of $\langle\cdot,\cdot\rangle_{L^2(\rd)}$.
	
	
	\subsection{Tensor products}
		For $f,g:\rd\to\bC$, we set $f\otimes g(x,y):=f(x)g(y)$, the \textit{tensor product} of $f$ and $g$.
		The operation $\otimes$ is continuous on $L^2(\rd)$ and $\cS(\rd)$. The space $$ L(\cS)= \Big\{\sum_{j=1}^Nc_jf_j\otimes g_j, \ N\in\bN, \ c_j\in\bC, \ f_j,g_j\in\cS(\rd)\Big\} $$ is dense in $\cS(\rdd)$. If $f,g\in\cS'(\rd)$, $f\otimes g$ is defined as the unique tempered distribution characterized by its action on $L(\cS)$ by:
	\[
		\langle f\otimes g,\sum_{j=1}^Nc_j\varphi_j\otimes\psi_j\rangle=\sum_{j=1}^N\overline{c_j}\langle f,\varphi_j\rangle\langle g,\psi_j\rangle.
	\]
	The operation $\otimes:\cS'(\rd)\times\cS'(\rd)\to\cS'(\rdd)$ is continuous and the space $$ L(\cS') := \Big\{\sum_{j=1}^Nc_jf_j\otimes g_j, \ N\in\bN, \ c_j\in\bC, \ f_j,g_j\in\cS'(\rd)\Big\} $$ is dense in $\cS'(\rdd)$. Moreover $L(\cS)$ (as well as $L(L^p)$, defined analogously) is also dense in $L^p(\rdd)$ for all $1\leq p<\infty$. 
	
	
\subsection{Fourier transform}\label{subsec:FT}
	For $f\in\cS(\rd)$, we define the Fourier transform of $f$ as the function
	\begin{equation}\label{FT}
		\hat f(\xi):=\int_{\rd}f(x)e^{-2\pi i\xi\cdot x}dx.
	\end{equation}
	The Fourier transform operator $\cF f:=\hat f$ is a surjective isomorphism of $\cS(\rd)$ to itself and extends to a surjective isomorphism of $\cS'(\rd)$ to itself by duality as
	\[
		\langle \hat f,\varphi \rangle = \langle f,\cF^{-1}\varphi \rangle.
	\]
	Moreover, for all $1\leq p\leq 2$, $\cF:L^p(\rd)\to L^{p'}(\rd)$ and if $p=2$, $\cF$ is a unitary operator. If $f\in \cS(\rdd)$, we define the partial Fourier transform $\cF_2$ of $f$ as
	\[
		\cF_2 f(x,\eta):=\int_{\rd}f(x,y)e^{-2\pi i\eta\cdot y}dy.
	\]
	Clearly, if $f,g\in \cS(\rd)$, $\cF_2(f\otimes g)=f\otimes\hat g$. The partial Fourier transform $\cF_2$ is also defined on $\cS'(\rdd)$ as
	\[
		\langle\cF_2 f,\varphi \rangle:=\langle f,\cF_2^{-1}\varphi\rangle, \qquad \varphi\in\cS(\rdd).
	\] 

\subsection{Time-frequency shifts}
	If $x,\xi\in\rd$ and $f:\rd\to\bC$ is measurable, we set
	\[
		T_xf(t):=f(t-x) \qquad \text{and} \qquad M_\xi f(t):=e^{2\pi i\xi\cdot t}f(t).
	\]
	$T_x$ is called \textit{translation operator} and $M_\xi$ is called \textit{modulation operator}. It is immediate to verify that for all $x,\xi\in\rd$, $T_x$ and $M_\xi$ are unitary operators on $L^2(\rd)$. Moreover, if $f\in\cS'(\rd)$, $T_xf$ and $M_\xi f$ are the tempered distribution characterized by their action on $\varphi\in\cS(\rd)$ as
	\[
		\langle T_xf,\varphi\rangle:= \langle f,T_{-x}\varphi\rangle  \qquad \text{and} \qquad \langle M_\xi f,\varphi\rangle:=\langle f,M_{-\xi}\varphi\rangle.
	\]
	Finally, if $f\in\cS'(\rd)$, $\cF(T_xf)=M_{-x}\hat f$ and $\cF(M_\xi f)=T_{\xi}f$. For this reason, the operators $\pi(x,\xi):=M_\xi T_x$ are called \textit{time-frequency shifts}.
	\subsection{Mixed norm spaces}
	For $s\in\bR$, we set $v_s(z):=(1+|z|^2)^{s/2}$ ($z\in\rd$). We say that a non-negative function $m$ on $\rd$ is \textbf{$v_s$-moderate} if $m(z+w)\lesssim v_s(z)m(w)$ for all $z,w\in\rd$.
	
	For $0<p,q\leq\infty$ and $f:\rdd\to\bC$ measurable, we set 
	\[
		\Vert f\Vert_{L^{p,q}}:=\left(\int_{\rd}\left(\int_{\rd}|f(x,y)|^pdx\right)^{q/p}dy\right)^{1/q},
	\]
	with the obvious modification for the case in which $\max\{p,q\}=\infty$. If $m$ is $v_s$-moderate for some $s\in\bR$, we set $L^{p,q}_m(\rdd)$ as the space of all measurable functions $f$ on $\rdd$ such that $\Vert mf\Vert_{L^{p,q}}<\infty$.

	\subsection{Modulation spaces}\label{subsec:MS}
	Let $g\in\cS(\rd)\setminus\{0\}$. We define the \textbf{short-time Fourier transform} (STFT) of a tempered distribution $f\in\cS'(\rd)$ with respect to the window $g$ as the function (and tempered distribution)
	\[
		V_gf(x,\xi):=\langle f,M_\xi T_xg\rangle.
	\]
	It follows directly by the definition that if $f,g\in L^2(\rd)$ and $x,\xi\in\rd$, then $|V_gf(x,\xi)|\leq\norm{f}_2\norm{g}_2$.
	
	For $0<p,q\leq\infty$ and $m:\rdd\to\bC$ $v_s$-moderate, the \textbf{modulation space} $M^{p,q}_m$ is defined as
	\[
		M^{p,q}_m(\rd):=\Big\{f\in\cS'(\rd):V_gf\in L^{p,q}_m(\rdd)\Big\}.
	\]
	If $p=q$, we simplify the notation writing $M^p_m$ instead of $M^{p,p}_m$. These are Banach spaces (quasi-Banach if $0<\min\{p,q\}<1$) with respect to the (quasi) norm $\norm{f}_{M^{p,q}_m}=\norm{V_gf}_{L^{p,q}_m}$ and different windows $g\in\cS(\rd)\setminus\{0\}$ define equivalent (quasi) norms. It can be proved that $\cS(\rd)\subseteq M^{p,q}_m(\rd)$ and we set $\cM^{p,q}_m(\rd)$ to denote the closure of $\cS(\rd)$ in $M^{p,q}_m(\rd)$, which coincides with the latest if $p,q\neq\infty$. Moreover, $M^{p_1,q_1}_m(\rd)\hookrightarrow M^{p_2,q_2}_m(\rd)$ if $0<p_1\leq p_2\leq\infty$ and $0<q_1\leq q_2\leq\infty$ and $M^2(\rd)=L^2(\rd)$. Moreover, $(M^{p,q}_m(\rd))'=M^{p',q'}_{1/m}(\rd)$ for $1\leq p,q<\infty$. We refer to \cite{CRBook} for the other properties of modulation spaces, such as convolution relations.
	
	We recall other inclusion properties, in terms of weights and $L^{p,q}$ spaces, that will be used in this work. Namely, if two moderate weights $m_1,m_2$ satisfy $m_1\lesssim m_2$ and $0<p,q\leq\infty$, then $M^{p,q}_{m_2}(\rd)\hookrightarrow M^{p,q}_{m_1}(\rd)$. Here, the notation $m_1\lesssim m_2$ means that there exists $C>0$ such that $m_1(x)\leq Cm_2(x)$ for all $x$. Finally, if $0<p\leq 2$, then $M^p_{v_s\otimes 1}(\rdd)\hookrightarrow L^p_{v_s}(\rdd)$. 
	
	\subsection{Symplectic group}
	In this work, we set 
	\begin{equation}\label{defJ}
		J_d=\begin{pmatrix}
			0_{d\times d} & I_{d\times d}\\
			-I_{d\times d} & 0_{d\times d}
		\end{pmatrix},
	\end{equation}
	where $I_{d\times d}\in\bR^{d\times d}$ is the identity matrix and $0_{d\times d}$ is the matrix of $\bR^{d\times d}$ having all zero entries. When $d$ is clear from the context, we omit all the subscripts.
	
	A matrix $\cA\in\bR^{2d\times 2d}$ is symplectic, and we write $\cA\in Sp(d,\bR)$, if $\cA^TJ\cA=J$. Writing $\cA$ as a block matrix, i.e. 
	\begin{equation}\label{blocksA}
		\cA=\begin{pmatrix} A & B\\
		C & D\end{pmatrix},
	\end{equation}
	($A,B,C,D\in\bR^{d\times d}$) it is immediate to verify that $\cA\in\bR^{2d\times 2d}$ is symplectic if and only if the following conditions hold:
	\begin{equation}\label{carSyymp}
		\begin{cases}
			AC^T=A^TC,\\
			BD^T=B^TD,\\
			A^TD-C^TB=I.
		\end{cases}
	\end{equation}
	If $\cA\in Sp(d,\bR)$, then $\det(\cA)=1$ and the inverse of $\cA$ is explicitly given in terms of the blocks of $\cA$ as
	\[
		\cA^{-1}=\begin{pmatrix} D^T & -B^T\\
		-C^T & A^T\end{pmatrix}.
	\]
	If $\cA\in Sp(d,\bR)$ and $\det(B)\neq0$, $\cA$ is called \textbf{free}. For $L\in GL(d,\bR)$ and $C\in\bR^{d\times d}$, $C$ symmetric, we define
	\begin{equation}\label{defDLVC}
		\cD_L:=\begin{pmatrix}
			L^{-1} & 0\\
			0 & L^T
		\end{pmatrix} \qquad \text{and} \qquad V_C:=\begin{pmatrix}
			I & 0\\ C & I
		\end{pmatrix}.
	\end{equation}
	$J$ and the matrices in the form $V_C$ ($C$ symmetric) and $\cD_L$ ($L$ invertible) generate the group $Sp(d,\bR)$. Also, if $\cA\in Sp(d,\bR)$, there exist $\cA_1,\cA_2\in Sp(d,\bR)$ free such that $\cA=\cA_1\cA_2$. 

\subsection{Metaplectic operators}
Let $\rho$ be the Schr\"odinger representation of the Heisenberg group, that is $$\rho(x,\xi,\tau)=e^{2\pi i\tau}e^{\pi i\xi\cdot x}\pi(x,\xi),$$ for all $x,\xi\in\rd$, $\tau\in\bR$. For all $\cA\in Sp(d,\bR)$, $\rho_\cA(x,\xi,\tau):=\rho(\cA (x,\xi),\tau)$ defines another representation of the Heisenberg group that is equivalent to $\rho$, i.e. there exists a unitary operator $\mu(\cA):L^2(\rdd)\to L^2(\rdd)$ such that
\begin{equation}\label{muAdef}
	\mu(\cA)\rho(x,\xi,\tau)\mu(\cA)^{-1}=\rho(\cA(x,\xi),\tau) \qquad (\forall x,\xi\in\rd, \ \tau\in\bR).
\end{equation}
This operator is not unique, but if $\mu'(\cA)$ is another unitary operator satisfying (\ref{muAdef}), then $\mu'(\cA)=c\mu(\cA)$, for some unitary constant $c\in\bC$, $|c|=1$. Any of the operators $\mu(\cA)$ is called \textbf{metaplectic operator} associated to $\cA$. 

The choice of $c$ is relevant when composing metaplectic operators: if $\cA,\cB\in Sp(d,\bR)$, then $\mu(\cA)\mu(\cB)=c\mu(\cA\cB)$ for some $c\in\bC$ with $|c|=1$ and, in general, $c\neq1$. In this work, we avoid the technicality of choosing the correct constant and just write \textit{up to a unitary phase factor} whenever its presence is left implicit. For instance, the relation (\ref{muAdef}) is equivalent to
\begin{equation}\label{muAdefP}
	\mu(\cA)\pi(x,\xi)\mu(\cA)^{-1}=\pi(\cA(x,\xi)) \qquad (\forall x,\xi\in\rd)
\end{equation}
up to a unitary factor. 

\begin{proposition}{\cite[Proposition 4.27]{FollandHAPS}}\label{Folland427}
	For all $\cA\in Sp(d,\bR)$, the operator $\mu(\cA)$ maps $\cS(\rd)$ isomorphically to $\cS(\rd)$ and it extends to an isomorphism on $\cS'(\rd)$.
\end{proposition}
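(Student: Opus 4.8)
The plan is to prove Proposition \ref{Folland427} by reducing to the generators of $Sp(d,\bR)$ listed in the preliminaries, namely $J$, the shears $V_C$ with $C=C^T$, and the dilations $\cD_L$ with $L\in GL(d,\bR)$. Since $\mu$ is multiplicative up to a unitary phase factor and the phase factor is irrelevant to mapping properties on $\cS(\rd)$ and $\cS'(\rd)$, it suffices to exhibit, for each generator, an explicit unitary operator satisfying the intertwining relation \eqref{muAdefP} and to check that this explicit operator preserves $\cS(\rd)$ and extends to $\cS'(\rd)$. First I would record the standard explicit formulas: up to a unitary phase, $\mu(J)=\cF$ (the Fourier transform), $\mu(V_C)f(x)=e^{\pi i Cx\cdot x}f(x)$ is multiplication by a unimodular Gaussian-type chirp, and $\mu(\cD_L)f(x)=|\det L|^{1/2}f(L^Tx)$ is a normalized change of variables. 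One verifies \eqref{muAdefP} for each of these by a direct computation using $\cF(T_x f)=M_{-x}\hf$, $\cF(M_\xi f)=T_\xi f$, and the elementary commutation relations between $T_x$, $M_\xi$ and multiplication/dilation operators; this is routine and I would not grind through it.

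The heart of the matter is then the mapping statement for the generators. For $\mu(J)=\cF$ this is exactly the classical fact, recalled in Subsection \ref{subsec:FT}, that $\cF$ is a surjective isomorphism of $\cS(\rd)$ onto itself and extends by duality $\langle\hf,\varphi\rangle=\langle f,\cF^{-1}\varphi\rangle$ to an isomorphism of $\cS'(\rd)$. For $\mu(\cD_L)$, the dilation $f\mapsto f(L^T\cdot)$ is a linear change of variables by an invertible matrix, which clearly maps $\cS(\rd)$ isomorphically onto itself (the seminorms $\rho_{\alpha,\beta}$ are equivalent before and after the substitution, by the chain rule and the product rule, with constants depending on $L$ and $L^{-1}$), and its transpose action defines the extension to $\cS'(\rd)$. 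For $\mu(V_C)$, multiplication by the smooth function $x\mapsto e^{\pi i Cx\cdot x}$, I would note that this function and all its derivatives have polynomial growth — in fact each derivative is a polynomial times the same unimodular exponential — so by the Leibniz rule it is a multiplier of $\cS(\rd)$ whose inverse (multiplication by $e^{-\pi i Cx\cdot x}$) is of the same type, hence it is an isomorphism of $\cS(\rd)$; the extension to $\cS'(\rd)$ is again the transpose.

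Finally I would assemble the general case: given $\cA\in Sp(d,\bR)$, write $\cA$ as a finite product of generators, $\cA=G_1\cdots G_N$ with each $G_j\in\{J,V_{C_j},\cD_{L_j}\}$, so that $\mu(\cA)$ equals $\mu(G_1)\cdots\mu(G_N)$ up to a unitary phase. A composition of finitely many isomorphisms of $\cS(\rd)$ is an isomorphism of $\cS(\rd)$, and since each factor extends to $\cS'(\rd)$ via its transpose and transposition reverses composition, the composite extends to an isomorphism of $\cS'(\rd)$ as well; multiplying by a unimodular constant changes nothing. This proves the proposition. The only genuine obstacle is the bookkeeping needed to see that the explicit operators above really do satisfy \eqref{muAdefP} and hence are legitimate representatives of $\mu$ on each generator — once that is granted, the $\cS$ and $\cS'$ statements are soft. (Alternatively, one may simply cite Folland's proof of his Proposition 4.27, which is exactly this argument; the excerpt invokes it as a black box, so for the purposes of this survey a short indication along the above lines, or a bare reference, is all that is needed.)
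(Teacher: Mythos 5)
Your proposal is correct and is essentially the standard argument: the paper itself offers no proof of Proposition \ref{Folland427}, citing Folland's Proposition 4.27 as a black box, and your reduction to the generators $J$, $V_C$, $\cD_L$ (explicit formulas as in Example \ref{es22}, each an isomorphism of $\cS(\rd)$ extending by transposition to $\cS'(\rd)$, assembled via multiplicativity of $\mu$ up to a phase) is exactly the route taken in the cited source. The only quibble is a harmless convention slip: with the paper's definition of $\cD_L$ one has $\mu(\cD_L)f=|\det L|^{1/2}f(L\cdot)$ rather than $f(L^T\cdot)$, which does not affect the mapping argument.
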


\begin{example}\label{es22} For particular choices of $\cA\in Sp(d,\bR)$, $\mu(\cA)$ is known. Let $J$, $\cD_L$ and $V_C$ be defined as in (\ref{defJ}) and (\ref{defDLVC}) respectively. Moreover, define for $C\in\R^{d\times d}$,
\begin{equation}\label{chirp}
	\Phi_C(t)=e^{i\pi t\cdot Ct}.
\end{equation}
Recall that if $C$ is also symmetric and invertible, then $\widehat{\Phi_C}=|\det(C)|\Phi_{-C^{-1}}$. Then, up to a unitary factor,
\begin{itemize}	
\item[{(1)}] $\mu(J)f=\cF f$;
\item[{(2)}] $\mu(\cD_L)f=|\det(L)|^{1/2} f(L\cdot)$;
\item[{(3)}] $\mu(V_C)f=\Phi_C\cdot f$ and $\mu(V_C^T)f=\widehat{\Phi_{-C}}\ast f$. 
\item[{(4)}] If $\cA\in Sp(d,\bR)$ is free with block decomposition (\ref{blocksA}), then
\[
	\mu(\cA)f(x)=(\det(B))^{-1/2}\Phi_{-DB^{-1}}(x)\int_{\rd}f(y)e^{2\pi i B^{-1}x\cdot y}\Phi_{-B^{-1}A}(y)dy.
\]
\item[{(5)}] If $\cA_{FT2}\in Sp(2d,\bR)$ is the $4d\times4d$ matrix with block decomposition
\[
	\cA_{FT2}:=\begin{pmatrix}
	I_{d\times d} & 0_{d\times d} & 0_{d\times d} & 0_{d\times d}\\
	0_{d\times d} & 0_{d\times d} & 0_{d\times d} & I_{d\times d} \\
	0_{d\times d} & 0_{d\times d} & I_{d\times d} & 0_{d\times d}\\
	0_{d\times d} & -I_{d\times d} & 0_{d\times d} & 0_{d\times d}
	\end{pmatrix},
\]
then $\mu(\cA_{FT2})=\cF_2$.
\end{itemize}
\end{example}

%

We conclude this paragraph recalling a continuity result on modulation spaces of metaplectic operators. We refer to \cite[Theorem 2.13]{CR2021} for its proof.

\begin{theorem}\label{thm213}
	Assume $s\in\bR$, $\cA\in Sp(d,\bR)$ and $0<p<\infty$. Then, $\mu(\cA):\cS(\rd)\to\cS'(\rd)$ extends to a continuous operator on $M^p_{v_s}(\rd)$ and on $\cM^\infty_{v_s}(\rd)$.
\end{theorem}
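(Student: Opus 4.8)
The plan is to reduce the statement to known building blocks: the factorization of any symplectic matrix into the generators $J$, $\cD_L$, $V_C$, and the explicit form of the corresponding metaplectic operators given in Example \ref{es22}, combined with the boundedness of each such elementary operator on the weighted modulation spaces $M^p_{v_s}(\rd)$ and on $\cM^\infty_{v_s}(\rd)$. Since $\mu(\cA_1\cA_2)=c\,\mu(\cA_1)\mu(\cA_2)$ up to a unitary factor and the spaces are closed under composition of bounded operators, it suffices to prove the claim for $\cA$ equal to each generator. The unitary phase factor is irrelevant for boundedness, so the ambiguity in the definition of $\mu(\cA)$ causes no trouble.

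The key steps, in order. First, handle $\mu(J)=\cF$: the Fourier transform is bounded on $M^p_{v_s}(\rd)$ and on $\cM^\infty_{v_s}(\rd)$ because it intertwines the STFT according to $V_{\hat g}\hat f(x,\xi)=e^{-2\pi i x\cdot\xi}V_gf(-\xi,x)$, so it just permutes and reflects the time-frequency plane; since $v_s$ is radial, the weight is preserved and the $L^{p}$ (resp. $L^\infty$) norm of the STFT is unchanged. Second, handle $\mu(\cD_L)f=|\det L|^{1/2}f(L\cdot)$: a linear change of variables on $\rd$ induces a linear change of variables on $\rdd$ at the STFT level, namely $V_{g(L\cdot)}(f(L\cdot))(x,\xi)=|\det L|^{-1}V_gf(Lx, L^{-T}\xi)$ up to a phase; since $v_s$ is moderate and a linear bijection distorts it only by a constant factor, and the Jacobian is constant, both the $L^p$ and $L^\infty$ norms are controlled. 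Third, handle $\mu(V_C)f=\Phi_C\cdot f$: multiplication by the chirp $\Phi_C$ is bounded on every $M^{p,q}_m$ with moderate $m$ — this is the standard fact that chirp multiplication is a metaplectic operator and preserves modulation spaces, provable directly because $V_{\Phi_C g}(\Phi_C f)(x,\xi)=V_gf(x,\xi-Cx)$ up to a phase, and the shear $(x,\xi)\mapsto(x,\xi-Cx)$ distorts $v_s$ by a bounded factor. (The transpose case $\mu(V_C^T)=\widehat{\Phi_{-C}}\ast(\cdot)$ then follows by conjugating with $\cF$, or directly.) Fourth, assemble: write $\cA$ as a finite product of these generators, invoke the composition property of metaplectic operators, and note that a finite composition of operators each bounded on $M^p_{v_s}(\rd)$ (resp. $\cM^\infty_{v_s}(\rd)$) is bounded there. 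Finally, one should check that $\mu(\cA)$ maps $\cS(\rd)$ into itself and into each $M^p_{v_s}$, so that the extension statement makes sense and the density $\overline{\cS(\rd)}=\cM^\infty_{v_s}(\rd)$ can be used to define the extension on the closure; but this is precisely Proposition \ref{Folland427}.

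The main obstacle — the only step requiring genuine care rather than bookkeeping — is tracking how each elementary operator transforms the STFT and verifying that the induced affine or chirp-shear map of the phase space $\rdd$ distorts the weight $v_s$ only by a multiplicative constant. The radiality of $v_s$ and the moderateness estimate $v_s(z+w)\lesssim v_s(z)v_s(w)$ together with $v_s(Tz)\asymp v_s(z)$ for any fixed invertible linear $T$ do the job, but one must be attentive that in the $p=\infty$ case the \emph{plain} $M^\infty_{v_s}$ is not reached by density; this is why the statement restricts to $\cM^\infty_{v_s}(\rd)$, the closure of $\cS(\rd)$, and one should observe that each elementary operator maps $\cS(\rd)$ to $\cS(\rd)$ (Proposition \ref{Folland427}), hence maps the closure into the closure. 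For $0<p<1$ the same STFT-covariance computations go through verbatim since the quasi-norm arguments only use the pointwise transformation law for $|V_gf|$ and a constant change of variables, so no separate treatment is needed beyond replacing "norm" by "quasi-norm". This is exactly the route taken in \cite[Theorem 2.13]{CR2021}.
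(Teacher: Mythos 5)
Your argument is sound, but note that the paper itself contains no proof of this statement: Theorem \ref{thm213} is only recalled, with the proof delegated to \cite[Theorem 2.13]{CR2021}, so there is nothing internal to compare against line by line. Your route --- factor $\cA$ into the generators $J$, $\cD_L$, $V_C$ (a fact the paper records), compute how each elementary metaplectic operator transforms $|V_gf|$ (rotation, linear change of variables, shear of $\rdd$), and use that $v_s$ is radial and satisfies $v_s(Tz)\asymp v_s(z)$ for fixed invertible $T$, that the window change $g\mapsto \hat g,\ g\circ L,\ \Phi_C g$ stays in $\cS(\rd)$ (so norm equivalence holds also for $0<p<1$), and density of $\cS(\rd)$ in $M^p_{v_s}(\rd)$ for $p<\infty$ resp.\ in $\cM^\infty_{v_s}(\rd)$ --- is correct and complete in outline. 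It is worth pointing out a shorter argument that avoids the generator decomposition entirely and handles all $\cA$ at once: up to a phase, $V_g(\mu(\cA)f)(z)=\langle f,\mu(\cA)^{-1}\pi(z)g\rangle=V_{\mu(\cA)^{-1}g}f(\cA^{-1}z)$ by \eqref{muAdefP}, and since $\mu(\cA)^{-1}g\in\cS(\rd)$ is an admissible window and $v_s\circ\cA\asymp v_s$, the $L^p_{v_s}$ quasi-norm is preserved up to constants; this covariance computation is essentially the proof in the cited reference and explains why the diagonal spaces $M^p_{v_s}$ (and not mixed $M^{p,q}$) appear in the statement. On that point, one parenthetical claim in your write-up is false as stated: multiplication by the chirp $\Phi_C$ is \emph{not} bounded on every $M^{p,q}_m$ with $p\neq q$, because the shear $(x,\xi)\mapsto(x,\xi-Cx)$ does not preserve mixed $L^{p,q}$ norms; only the diagonal case $p=q$, which is all your proof actually uses, is true. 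With that overclaim removed, the proposal stands.
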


\section{Gabor Frames}
Given a discrete subset $\Lambda\subseteq\rdd$ and a window function $g\in L^2(\rd)$, the  \textbf{Gabor system} $\cG(g,\Lambda)$ is the family
\[
	\cG(g,\Lambda):=\{\pi(\lambda)g, \ \lambda\in\Lambda\}.
\]
A Gabor system $\cG(g,\Lambda)$ defines a \textbf{Gabor frame} if there exist $A,B>0$ such that
\[
	A\norm{f}_2^2\leq\sum_{\lambda\in\Lambda}|\langle f,\pi(\lambda)g\rangle|^2\leq B\norm{f}_2^2
\]
holds for all $f\in L^2(\rd)$. It is common practice to consider lattices $\Lambda=A\bZ^{2d}$, for $A\in GL(2d,\bR)$.

\section{Metaplectic Wigner distributions}\label{sec:MWD}
\subsection{Definition and first properties} Many of the most used time-frequency representations are related to metaplectic operators. 
\begin{example}\label{exDist} In what follows, $f,g\in L^2(\rd)$.
	\begin{itemize}
		\item[{(1)}] The STFT of $f$ with respect to the window $g$, defined as in (\ref{STFT}),  satisfies 
		\begin{equation}\label{metaSTFT}
			V_gf=\mu(A_{ST})(f\otimes \bar g),
		\end{equation}
		where $A_{ST}\in Sp(2d,\bR)$ is the symplectic matrix whose $d\times d$ blocks are given by
		\[
			A_{ST}=\begin{pmatrix}
				I & -I & 0 & 0\\
				0 & 0 &I & I\\
				0 & 0 & 0 & -I\\
				-I & 0 & 0 & 0
			\end{pmatrix}.
		\]
		\item[{(2)}] For $\tau\in[0,1]$, the (cross) $\tau$-Wigner distribution,
		\[
			W_\tau (f,g)(x,\xi)=\int_{\rd}f(x+\tau t)\overline{g(x-(1-\tau)t)}e^{-2\pi i\xi t}dt,
		\]
		satisfies
		\begin{equation}\label{metaWtau}
			W_\tau(f,g)=\mu(A_\tau)(f\otimes \bar g),
		\end{equation}
		where
		\begin{equation}\label{Atau}
			A_\tau=\begin{pmatrix}
				(1-\tau)I & \tau I & 0 & 0\\
				0 & 0 & \tau I & -(1-\tau)I\\
				0 & 0 & I & I\\
				-I & I & 0 & 0
			\end{pmatrix}.
		\end{equation}
		This case covers the classical (cross)-Wigner distribution ($\tau=1/2$):
		\begin{equation}\label{WignerClassica}
			W(f,g)(x,\xi)=\int_{\rd}f\left(x+\frac{t}{2}\right)\overline{g\left(x-\frac{t}{2}\right)}e^{-2\pi it\cdot\xi}dt,
		\end{equation}
		and the (cross)-Rihacek distribution $$W_0(f,g)(x,\xi)=f(x)\overline{\hat g(\xi)}e^{-2\pi i\xi\cdot x},$$ that are used as time-frequency representations in the definition of Weyl and Kohn-Nirenberg quantizations for pseudodifferential operators, respectively. The Wigner distribution is the cross-Wigner distribution when $f=g$, and analogously for the other representations. However, as when it does not cause confusion, we drop the term "cross" also when $f\neq g$.
	\end{itemize}
\end{example}
Metaplectic time-frequency representations are defined as generalizations of (\ref{metaSTFT}) and (\ref{metaWtau}).

\begin{definition}
	Let $\cA\in Sp(2d,\bR)$. The (cross) \textbf{metaplectic Wigner distribution} (or $\cA$-Wigner distribution) with symplectic matrix $\cA$ is defined as
	\begin{equation}
		W_\cA(f,g)(x,\xi):=\mu(\cA)(f\otimes \bar g)(x,\xi) \qquad (x,\xi\in \rd, \ f,g\in L^2(\rd)).
	\end{equation}
\end{definition}

By the continuity of the tensor product and Proposition \ref{Folland427}, we have an immediate boundedness result for these time-frequency representations on $\cS$, $L^2$ and $\cS'$.

\begin{proposition}
	Let $\cA\in Sp(d,\bR)$. \\
	(i) $W_\cA:L^2(\rd)\times L^2(\rd)\to L^2(\rdd)$ is continuous;\\
	(ii) $W_\cA:\cS(\rd)\times\cS(\rd)\to \cS(\rdd)$ is continuous;\\
	(iii) $W_\cA:\cS'(\rd)\times\cS'(\rd)\to\cS'(\rdd)$ is continuous. 
\end{proposition}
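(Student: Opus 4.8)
The plan is to deduce all three continuity statements from the two facts already available: the continuity of the tensor product $\otimes$ on each of the three spaces $\cS(\rd)$, $L^2(\rd)$, $\cS'(\rd)$ (recorded in Section~\ref{sec:2}), and the mapping properties of the metaplectic operator $\mu(\cA)$ given by Proposition~\ref{Folland427} together with the unitarity of $\mu(\cA)$ on $L^2(\rdd)$. The key observation is simply that $W_\cA(f,g) = \mu(\cA)\circ(f\otimes\bar g)$ is a composition of two continuous maps, so it suffices to track the domains and codomains correctly through the composition, noting that conjugation $g\mapsto\bar g$ is an (anti-linear) topological isomorphism on each of $\cS(\rd)$, $L^2(\rd)$, and $\cS'(\rd)$.

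For part (i), I would note that $(f,g)\mapsto f\otimes\bar g$ maps $L^2(\rd)\times L^2(\rd)$ continuously into $L^2(\rdd)$ with $\norm{f\otimes\bar g}_{L^2(\rdd)} = \norm{f}_2\norm{g}_2$, and then $\mu(\cA)$ is unitary on $L^2(\rdd)$ by construction; hence $\norm{W_\cA(f,g)}_{L^2(\rdd)} = \norm{f}_2\norm{g}_2$, giving boundedness (hence continuity) of the bilinear map. For part (ii), the tensor product is continuous from $\cS(\rd)\times\cS(\rd)$ to $\cS(\rdd)$, and by Proposition~\ref{Folland427} $\mu(\cA)$ is a topological isomorphism of $\cS(\rdd)$ onto itself; composing gives continuity into $\cS(\rdd)$. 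For part (iii), likewise $\otimes:\cS'(\rd)\times\cS'(\rd)\to\cS'(\rdd)$ is continuous (jointly, in the sense made precise in Section~\ref{sec:2}, i.e.\ separately continuous in each weak-$\ast$ argument, which is what is meant here) and $\mu(\cA)$ extends to an isomorphism of $\cS'(\rdd)$ by Proposition~\ref{Folland427}, so the composition is again continuous for the weak-$\ast$ topologies.

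I do not expect any real obstacle here; the statement is essentially a formal consequence of two previously established facts. The only point requiring a little care is the precise meaning of ``continuous'' in each case: in (i) it is joint continuity of a bounded bilinear map on Hilbert spaces; in (ii) it is joint continuity for the Fréchet topology of $\cS$; and in (iii) it should be read as separate continuity in each variable for the weak-$\ast$ topology, matching the continuity statement for $\otimes$ on $\cS'$ given earlier. A minor bookkeeping remark worth including is that the index in the statement should be $\cA\in Sp(2d,\bR)$ (consistent with the definition of $W_\cA$), but this does not affect the argument. No further computation is needed.
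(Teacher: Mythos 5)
Your proposal is correct and follows exactly the paper's own argument: the paper deduces the proposition immediately from the continuity of the tensor product on $\cS$, $L^2$, $\cS'$ together with Proposition \ref{Folland427} and the unitarity of $\mu(\cA)$ on $L^2(\rdd)$, just as you do by writing $W_\cA(f,g)=\mu(\cA)(f\otimes\bar g)$ as a composition of continuous maps. Your added remarks (the Moyal-type norm identity in (i), the reading of continuity on $\cS'$, and the $Sp(2d,\bR)$ index correction) are accurate but do not change the route.
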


In what follows, we will decompose symplectic $4d\times4d$ matrices into blocks in different ways. Generally, if $\cA\in Sp(2d,\bR)$, we can write its block decomposition into the blocks $A_{ij}\in\bR^{d\times d}$ ($i,j=1,\ldots,4$) as
\begin{equation}\label{blockDA}	
\cA=\begin{pmatrix}
	A_{11} & A_{12} & A_{13} & A_{14}\\
	A_{21} & A_{22} & A_{23} & A_{24}\\
	A_{31} & A_{32} & A_{33} & A_{34}\\
	A_{41} & A_{42} & A_{43} & A_{44}
\end{pmatrix}.
\end{equation}

The first properties of metaplectic Wigner distributions were studied in \cite{CR2021,CR2022, CGR2022}. We begin with the proving that the fundamental identity of time-frequency analysis, that holds for the STFT, holds for all metaplectic Wigner distributions as well, under the following form, cf. \cite[Proposition 2.7]{CR2022}.

\begin{proposition}
	Let $\cA\in Sp(2d,\bR)$ have block decomposition (\ref{blockDA}). Then, for all $f,g\in L^2(\rd)$, up to a phase factor,
	\[
		W_{\cA}(\hat f,\hat g)=W_{\cA'}(f,g),
	\]
	where $\cA'$ is the matrix with block decomposition
	\begin{equation}
		\cA'=\begin{pmatrix}
	-A_{13} & A_{14} & A_{11} & -A_{12}\\
	-A_{23} & A_{24} & A_{21} & -A_{22}\\
	-A_{33} & A_{34} & A_{31} & -A_{32}\\
	-A_{43} & A_{44} & A_{41} & -A_{42}
\end{pmatrix}.
	\end{equation}
\end{proposition}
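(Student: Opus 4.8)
The plan is to relate $W_{\cA}(\hat f,\hat g)$ to $W_{\cA}(f,g)$ using the Fourier transform's metaplectic realization $\mu(J) = \cF$ from Example~\ref{es22}(1), together with the way partial Fourier transforms interact with the tensor product.

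First I would write $\hat f \otimes \hat g = \cF_2 \cF_1 (f\otimes g)$, where $\cF_1$ denotes the Fourier transform in the first $d$ variables and $\cF_2$ the one in the last $d$ variables. Each of these partial Fourier transforms is itself metaplectic: $\cF_2 = \mu(\cA_{FT2})$ with $\cA_{FT2}$ as in Example~\ref{es22}(5), and similarly $\cF_1 = \mu(\cA_{FT1})$ for the analogous $4d\times 4d$ matrix that performs $J_d$ on the first/third $d$-blocks (the $x$- and $\xi$-components corresponding to the first slot) and the identity elsewhere. However, there is one subtlety: $f\otimes\bar g$, not $f\otimes g$, appears in $W_{\cA}$. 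Since $\overline{\hat g} = \widehat{\check{\bar g}}$ where $\check{h}(t)=h(-t)$, the complex conjugate forces a reflection. So I would instead track $\overline{\hat g}$ directly: $\overline{\hat g}(\xi) = \int \overline{g(t)} e^{2\pi i \xi\cdot t}\, dt$, which is the \emph{inverse} Fourier transform of $\bar g$, i.e. $\overline{\hat g} = \cF^{-1}\bar g = \mu(J)^{-1}\bar g = \mu(J^{-1})\bar g$ up to a phase. Hence $\hat f \otimes \overline{\hat g} = (\mu(J)\otimes\mu(J^{-1}))(f\otimes\bar g)$, and the operator $\mu(J)\otimes\mu(J^{-1})$ is, up to a phase, $\mu(\cB)$ for the block-diagonal-type symplectic matrix $\cB\in Sp(2d,\bR)$ that acts as $J_d$ on the coordinates of the first slot and as $J_d^{-1}=-J_d$ on those of the second.

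Then $W_{\cA}(\hat f,\hat g) = \mu(\cA)(\hat f\otimes\overline{\hat g}) = \mu(\cA)\mu(\cB)(f\otimes\bar g) = \mu(\cA\cB)(f\otimes\bar g) = W_{\cA\cB}(f,g)$, all up to phase factors, using the composition rule for metaplectic operators. So the statement reduces to the purely linear-algebra verification that $\cA\cB = \cA'$, i.e. that right-multiplication by $\cB$ implements precisely the block rearrangement $(A_{11},A_{12},A_{13},A_{14})\mapsto(A_{13},-A_{14},A_{11},-A_{12})$ on each block row, and likewise for rows $2,3,4$. One must identify $\cB$ correctly: it should be the permutation-with-signs matrix that, acting on column 4-blocks, sends the third block-column into the first position, negates and moves the fourth into the second, moves the first into the third, and negates-and-moves the second into the fourth — which is exactly what $\mathrm{diag}(J_d,J_d^{-1})$ does once one writes out $J_d=\left(\begin{smallmatrix}0&I\\-I&0\end{smallmatrix}\right)$ in the interleaved $4d$-coordinate ordering $(x_1,\xi_1,x_2,\xi_2)$ versus the block ordering used in \eqref{blockDA}.

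The main obstacle I anticipate is bookkeeping of coordinate orderings: the $4d\times 4d$ matrices in \eqref{blockDA} are written in the ordering where the four $d$-blocks are (first-slot position, second-slot position, first-slot frequency, second-slot frequency) — as the structure of $A_{ST}$ and $A_\tau$ shows — whereas the naive tensor $\mu(J)\otimes\mu(J^{-1})$ lives most naturally in the ordering (first-slot position, first-slot frequency, second-slot position, second-slot frequency). Reconciling these by a fixed permutation matrix $P$ and checking $\cA\cdot (P^{-1}\mathrm{diag}(J_d,-J_d)P) = \cA'$ is the one genuinely error-prone computation; everything else is an assembly of Example~\ref{es22}, the tensor-product compatibility of metaplectic operators, and the composition law $\mu(\cA)\mu(\cB)=c\,\mu(\cA\cB)$. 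I would close by remarking that since $\cA\in Sp(2d,\bR)$ and $\cB\in Sp(2d,\bR)$, the product $\cA'=\cA\cB$ is automatically symplectic, so $W_{\cA'}$ is a bona fide metaplectic Wigner distribution.
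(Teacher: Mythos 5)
Your proposal is correct and essentially the paper's own argument: the operator $\cF\otimes\cF^{-1}$ that you apply to $f\otimes\bar g$ is exactly the metaplectic operator $\mu(\cD_S)\mu(J)$ (the full Fourier transform on $\rdd$ followed by reflection in the second variable, $S=\mathrm{diag}(I,-I)$) used in the paper, so both proofs reduce to the same composition law $\mu(\cA)\mu(\cB)=c\,\mu(\cA\cB)$ followed by the same block-matrix multiplication. The paper also leaves that final multiplication as a one-line observation ("it remains to observe that $\cA\cD_S J=\cA'$"), so your level of detail, including flagging the coordinate-ordering bookkeeping as the only delicate point, matches the paper's proof.
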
 
\begin{proof}
	Let $S=\begin{pmatrix} I & 0\\ 0 & -I
	\end{pmatrix}$. Then, up to a unitary factor,
	\[\begin{split}
		W_\cA(\hat f,\hat g)&=\mu(\cA)(\hat f\otimes \overline{\hat g})=\mu(\cA)(\hat f\otimes \mathcal{I}\hat{\bar g})=\mu(\cA)\mu(\cD_S)\mu(J)(f\otimes \bar g)\\
		&=\mu(\cA\cD_S J)(f\otimes \bar g)=W_{\cA\cD_S J}(f,g),
	\end{split}\]
	where $\cI \varphi(t):=\varphi(-t)$ is the reflection operator. It remains to observe that $\cA\cD_S J=\A'$, since $S^{-1}=S^T=S$.
\end{proof}

Also, Moyal's identity extends to metaplectic Wigner distributions as a direct consequence of the unitarity of $\mu(\cA)$, cf. \cite[Proposition 2.9]{CR2022}:

\begin{proposition}
	Let $\cA\in Sp(2d,\bR)$. Then, for all $f_1,f_2,g_1,g_2\in L^2(\rd)$,
	\begin{equation}\label{moyal}
		\langle W_\cA(f_1,g_1),W_\cA(f_2,g_2)\rangle_{L^2(\rdd)}=\langle f_1,f_2\rangle_{L^2(\rd)}\overline{\langle g_1,g_2\rangle_{L^2(\rd)}}.
	\end{equation}
	In particular, for all $f,g\in L^2(\rd)$,
	\[
		\Vert W_\cA(f,g)\Vert_{L^2(\rdd)}=\Vert f\Vert_{L^2(\rd)}\Vert g\Vert_{L^2(\rd)}.
	\]
\end{proposition}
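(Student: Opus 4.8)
The plan is to reduce everything to two facts already at our disposal: $\mu(\cA)$ is a \emph{unitary} operator on $L^2(\rdd)$, and the inner product of tensor products factorizes. Starting from the definition $W_\cA(f_i,g_i)=\mu(\cA)(f_i\otimes\bar g_i)$, I would write
\[
	\langle W_\cA(f_1,g_1),W_\cA(f_2,g_2)\rangle_{L^2(\rdd)}=\langle \mu(\cA)(f_1\otimes\bar g_1),\mu(\cA)(f_2\otimes\bar g_2)\rangle_{L^2(\rdd)}=\langle f_1\otimes\bar g_1, f_2\otimes\bar g_2\rangle_{L^2(\rdd)},
\]
the last equality because a unitary operator preserves the inner product. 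Note that although $\mu(\cA)$ is only defined up to a unimodular constant $c$, the \emph{same} operator occurs in both slots, so the factor $c\bar c=1$ cancels and the identity holds exactly; this is why no phase factor appears in the statement.

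Next I would compute the tensor-product inner product by Fubini's theorem:
\[
	\langle f_1\otimes\bar g_1, f_2\otimes\bar g_2\rangle_{L^2(\rdd)}=\int_{\rd}\int_{\rd} f_1(x)\bar g_1(y)\overline{f_2(x)\bar g_2(y)}\,dx\,dy=\langle f_1,f_2\rangle_{L^2(\rd)}\,\langle \bar g_1,\bar g_2\rangle_{L^2(\rd)}.
\]
Finally, $\langle\bar g_1,\bar g_2\rangle_{L^2(\rd)}=\int_{\rd}\bar g_1(y) g_2(y)\,dy=\overline{\langle g_1,g_2\rangle_{L^2(\rd)}}$, which gives (\ref{moyal}). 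Specializing to $f_1=f_2=f$ and $g_1=g_2=g$ and taking square roots yields $\Vert W_\cA(f,g)\Vert_{L^2(\rdd)}=\Vert f\Vert_{L^2(\rd)}\Vert g\Vert_{L^2(\rd)}$.

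There is essentially no obstacle here: the only points requiring minor care are the bookkeeping of complex conjugates (so that $\overline{\langle g_1,g_2\rangle}$ appears rather than $\langle g_1,g_2\rangle$) and the remark that the phase indeterminacy of $\mu(\cA)$ is harmless because it gets squared away in the inner product. All the substance is carried by the unitarity of metaplectic operators, already recorded above.
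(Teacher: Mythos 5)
Your proof is correct and follows exactly the route the paper indicates (the result is presented there as a direct consequence of the unitarity of $\mu(\cA)$, with the tensor-product factorization and the conjugate bookkeeping left implicit). Your added remark that the phase ambiguity of $\mu(\cA)$ cancels in the inner product is a sound observation consistent with the paper's treatment.
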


We conclude this section by proving a relation between the STFT and the metaplectic Wigner distributions in term of the inner product of $L^2$ and a continuity result on modulation spaces.

\begin{proposition}\label{prop36}
	Let $f,g_1,g_2,g_3\in L^2(\rd)$, $\cA\in Sp(2d,\bR)$ and assume that $\langle g_1,g_2\rangle\neq0$. Then, for all $w\in\rdd$,
	\[
		V_{g_3}f(w)=\frac{1}{\langle g_2,g_1\rangle}\langle W_\cA(f,g_1),W_\cA(\pi(w)g_3,g_2)\rangle_{L^2(\rdd)}.
	\]
\end{proposition}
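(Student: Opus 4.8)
The plan is to reduce the claimed identity to the Moyal-type formula (\ref{moyal}) for $W_\cA$ by choosing the four entries of the inner product on the right-hand side appropriately. Concretely, I would first recall that time-frequency shifts intertwine with $W_\cA$ nicely: since $\pi(w)g_3$ is just the window $g_3$ shifted in phase space, one expects $W_\cA(\pi(w)g_3,g_2)$ to be a phase-space-shifted and modulated copy of $W_\cA(g_3,g_2)$. But in fact I don't even need a covariance statement here — the cleaner route is to apply (\ref{moyal}) directly with $f_1=f$, $g_1=g_1$, $f_2=\pi(w)g_3$, $g_2=g_2$, which gives
\[
	\langle W_\cA(f,g_1),W_\cA(\pi(w)g_3,g_2)\rangle_{L^2(\rdd)}=\langle f,\pi(w)g_3\rangle_{L^2(\rd)}\,\overline{\langle g_1,g_2\rangle_{L^2(\rd)}}.
\]
Now $\langle f,\pi(w)g_3\rangle_{L^2(\rd)}=V_{g_3}f(w)$ by the very definition of the STFT given in Subsection \ref{subsec:MS}, and $\overline{\langle g_1,g_2\rangle}=\langle g_2,g_1\rangle$. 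Dividing by $\langle g_2,g_1\rangle$ (which is nonzero by hypothesis, being the conjugate of $\langle g_1,g_2\rangle\neq 0$) yields exactly the asserted formula.

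So the key steps, in order, are: (1) observe that $\pi(w)g_3\in L^2(\rd)$, so that all four arguments lie in $L^2(\rd)$ and (\ref{moyal}) applies; (2) substitute into (\ref{moyal}); (3) recognize $\langle f,\pi(w)g_3\rangle$ as $V_{g_3}f(w)$; (4) rewrite $\overline{\langle g_1,g_2\rangle}=\langle g_2,g_1\rangle$ and divide. Each step is essentially a one-line verification.

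There is, honestly, no serious obstacle here: the statement is a direct corollary of Moyal's identity for metaplectic Wigner distributions together with the definition of the STFT. The only minor points to be careful about are the placement of complex conjugates — making sure the sesquilinearity conventions of (\ref{moyal}) match the convention $V_gf(x,\xi)=\langle f,M_\xi T_xg\rangle$ used throughout, and that one divides by $\langle g_2,g_1\rangle$ rather than $\langle g_1,g_2\rangle$ — and noting that the identity holds for \emph{every} $w\in\rdd$ pointwise (not merely almost everywhere), which is fine since both sides are continuous functions of $w$ when the windows are in $L^2$. If one instead wanted a ``covariance-flavoured'' proof, one could alternatively write $W_\cA(\pi(w)g_3,g_2)$ explicitly via the intertwining relation (\ref{muAdefP}) and reduce to the case $w=0$, but this is a longer and unnecessary detour compared to the direct application of (\ref{moyal}).
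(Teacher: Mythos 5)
Your proof is correct and is essentially identical to the paper's: both apply Moyal's identity (\ref{moyal}) with $f_2=\pi(w)g_3$, identify $\langle f,\pi(w)g_3\rangle$ with $V_{g_3}f(w)$, and divide by $\langle g_2,g_1\rangle=\overline{\langle g_1,g_2\rangle}\neq 0$. No gaps; the extra remarks about conjugate placement and pointwise validity are fine but not needed.
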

\begin{proof}
	Let $f,g_1,g_2,g_3\in L^2(\rd)$ satisfy the hypothesis. Using (\ref{moyal}),
	\begin{align*}
		V_{g_3}f(w)\overline{\langle g_1,g_2\rangle}&=\langle f,\pi(w)g_3\rangle\overline{\langle g_1,g_2\rangle}=\langle W_\cA(f,g_1),W_\cA(\pi(w)g_3,g_2)\rangle,
	\end{align*}
	and the assertion follows.
\end{proof}
%

\begin{theorem}\label{thm37}
	Assume $f,g\in M^p_{v_s}(\rd)$, $0<p\leq\infty$ and $s\geq0$. For any $\cA\in Sp(2d,\bR)$, $W_\cA(f,g)\in M^p_{v_s}(\rdd)$ with
	\begin{equation}\label{conMpWA}
		\norm{W_\cA(f,g)}_{M^p_{v_s}}\lesssim\norm{f}_{M^p}\norm{g}_{M^p_{v_s}}+\norm{g}_{M^p}\norm{f}_{M^p_{v_s}}.
	\end{equation}
\end{theorem}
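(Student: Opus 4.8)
The plan is to reduce the statement about the metaplectic Wigner distribution $W_\cA(f,g) = \mu(\cA)(f\otimes\bar g)$ to two facts established earlier in the excerpt: the continuity of the tensor product on modulation spaces, and Theorem \ref{thm213}, which gives continuity of metaplectic operators $\mu(\cA)$ on $M^p_{v_s}$ (and on $\cM^\infty_{v_s}$). The first step is therefore to prove the tensor-product estimate
\[
	\norm{f\otimes g}_{M^p_{v_s\otimes v_s}(\rdd)} \lesssim \norm{f}_{M^p_{v_s}(\rd)}\norm{g}_{M^p_{v_s}(\rd)},
\]
or, more precisely, the ``Leibniz-type'' version adapted to the weight $v_s$ on $\rdd$: since $v_s(x,\xi) \leq v_s(x) \cdot v_s(\xi) \lesssim v_s(x)\,v_0(\xi) + v_0(x)\,v_s(\xi)$ holds pointwise for $s \geq 0$ (because $(1+|x|^2+|\xi|^2)^{s/2} \lesssim (1+|x|^2)^{s/2} + (1+|\xi|^2)^{s/2}$), one expects
\[
	\norm{f\otimes g}_{M^p_{v_s\otimes 1}} \lesssim \norm{f}_{M^p_{v_s}}\norm{g}_{M^p} + \norm{f}_{M^p}\norm{g}_{M^p_{v_s}},
\]
and similarly with the roles of the two tensor factors interchanged, giving a bound in $M^p_{1\otimes v_s}$. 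This is proved directly at the level of STFTs: choosing a window $\Phi = \varphi_1\otimes\varphi_2$ on $\rdd$ with $\varphi_i\in\cS(\rd)\setminus\{0\}$, one has the exact factorization $V_\Phi(f\otimes g)(x,y,\xi,\eta) = V_{\varphi_1}f(x,\xi)\,V_{\varphi_2}g(y,\eta)$, and then one estimates the $L^p$ quasi-norm over $\rd^{4d}$ of the product against the weight, using the pointwise split of $v_s$ above and the (quasi-)triangle inequality.

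The second step is to combine this with Theorem \ref{thm213}. Write $v_s(z) = v_s(z)$ for $z\in\rdd$; by the pointwise comparison $v_s(x,\xi) \lesssim v_s(x)\,v_s(\xi)$ one only needs to control $W_\cA(f,g)$ in the (larger, i.e. smaller weight requires weaker control — careful) space $M^p_{v_s\otimes 1}(\rdd)$ and in $M^p_{1\otimes v_s}(\rdd)$, and intersect. More concretely: the weight $v_s$ on $\rdd$ satisfies $v_s \lesssim (v_s\otimes 1) + (1\otimes v_s)$, so $M^p_{v_s\otimes 1}(\rdd) \cap M^p_{1\otimes v_s}(\rdd) \hookrightarrow M^p_{v_s}(\rdd)$ with $\norm{h}_{M^p_{v_s}} \lesssim \norm{h}_{M^p_{v_s\otimes 1}} + \norm{h}_{M^p_{1\otimes v_s}}$ (for $p\geq 1$ by the triangle inequality, for $0<p<1$ using the $p$-triangle inequality and $a+b \lesssim (a^p+b^p)^{1/p}$). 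Now apply Theorem \ref{thm213}: $\mu(\cA)$ is bounded on $M^p_{v_s}(\rdd)$, hence $\norm{\mu(\cA)h}_{M^p_{v_s}} \lesssim \norm{h}_{M^p_{v_s}} \lesssim \norm{h}_{M^p_{v_s\otimes 1}} + \norm{h}_{M^p_{1\otimes v_s}}$. Taking $h = f\otimes\bar g$ and inserting the tensor estimate from Step 1 yields
\[
	\norm{W_\cA(f,g)}_{M^p_{v_s}} = \norm{\mu(\cA)(f\otimes\bar g)}_{M^p_{v_s}} \lesssim \norm{f}_{M^p}\norm{g}_{M^p_{v_s}} + \norm{g}_{M^p}\norm{f}_{M^p_{v_s}},
\]
which is exactly (\ref{conMpWA}), after noting $\norm{\bar g}_{M^p_{v_s}} = \norm{g}_{M^p_{v_s}}$ (conjugation and the reflection-free symmetry of the weight).

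\textbf{Main obstacle.} The genuinely nontrivial point is the tensor-product estimate of Step 1 in the quasi-Banach range $0<p<1$ and, more importantly, making sure Theorem \ref{thm213} is applicable in the form needed: Theorem \ref{thm213} as stated covers $M^p_{v_s}$ for $0<p<\infty$ and $\cM^\infty_{v_s}$ for $p=\infty$. For $p=\infty$ one must therefore check that $f\otimes\bar g \in \cM^\infty_{v_s}(\rdd)$ (the $\cS$-closure) rather than merely $M^\infty_{v_s}(\rdd)$ when $f,g\in M^\infty_{v_s}(\rd)$ — but this may fail, since $\cM^\infty$ is not stable under tensor products in an obvious way, so the $p=\infty$ case likely needs either the weak-$\ast$ density argument or a direct verification that $W_\cA$ maps into the closure; this is the subtle endpoint. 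The other mild technical care is bookkeeping the weight splitting $v_s(x,\xi)\lesssim v_s(x)+v_s(\xi)$ with the correct implicit constants when $s\geq 0$, and tracking the quasi-norm constants uniformly in $\cA$ — but the constant in (\ref{conMpWA}) is allowed to depend on $\cA$, $p$, $s$ and the auxiliary windows, so no uniformity is required.
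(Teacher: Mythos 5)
Your argument is essentially the paper's own proof: bound $\norm{W_\cA(f,g)}_{M^p_{v_s}}=\norm{\mu(\cA)(f\otimes\bar g)}_{M^p_{v_s}}\lesssim\norm{f\otimes\bar g}_{M^p_{v_s}}$ via Theorem \ref{thm213}, split the weight using $v_s\asymp v_s\otimes 1+1\otimes v_s$, and conclude with the STFT tensor factorization $V_{\varphi_1\otimes\varphi_2}(f\otimes\bar g)=V_{\varphi_1}f\cdot V_{\varphi_2}\bar g$ — your reordering of the two steps changes nothing of substance. The $p=\infty$ subtlety you flag (whether $f\otimes\bar g$ lands in $\cM^\infty_{v_s}$ so that Theorem \ref{thm213} applies) is glossed over in the paper's proof as well, so it does not distinguish your route from theirs.
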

\begin{proof}
	By Theorem \ref{thm213}, $\mu(\cA):M^p_{v_s}(\rdd)\to M^p_{v_s}(\rdd)$ is continuous. Hence,
	\[
		\norm{W_\cA(f,g)}_{M^p_{v_s}}=\norm{\mu(\cA)(f\otimes\bar g)}_{M^p_{v_s}}\lesssim\norm{f\otimes\bar g}_{M^p_{v_s}}.
	\]
	Since $v_s(x,y)\asymp (v_s\otimes 1)(x,y)+(1\otimes v_s)(x,y)$, it follows that
	\begin{align*}
		\norm{W_\cA(f,g)}_{M^p_{v_s}}&\lesssim\norm{f\otimes\bar g}_{M^p_{v_s\otimes1+1\otimes v_s}}\lesssim \norm{f\otimes\bar g}_{M^p_{v_s\otimes 1}}+\norm{f\otimes\bar g}_{M^p_{1\otimes v_s}}\\
		&=\norm{f}_{M^p_{v_s}}\norm{g}_{M^p}+\norm{f}_{M^p}\norm{g}_{M^p_{v_s}}.
	\end{align*}
\end{proof}


\section{Cohen's class of metaplectic Wigner distributions}\label{sec:CC}
A time-frequency representation $Q=Q(f,g)$ belongs to the Cohen's class if it can be written as a convolution of the classical Wigner distribution $W$ with a so-called Cohen's kernel $\sigma\in\cS'(\rd)$. Namely,
\begin{equation}\label{cohen}
	Q(f,g)(x,\xi)=(W(f,g)\ast \sigma)(x,\xi) \qquad f,g\in L^2(\rd), \ x,\xi\in\rd.
\end{equation}
We may refer to these representations as to as \textbf{Cohen's distributions}.

Time-frequency representations with different degrees of regularity can be produced properly choosing the kernel $\sigma$. In this section we characterize the metaplectic time-frequency representations that satisfy (\ref{cohen}) and their respective kernels. Namely, we show that $\cA\in Sp(2d,\bR)$ defines a metaplectic Wigner distribution that belongs to the Cohen's class if and only if $W_\cA$ is covariant, according to the following definition.

\begin{definition}
	Let $\cA\in Sp(2d,\bR)$. We say that $W_\cA$ is \textbf{covariant} if 
	\[
		W_\cA(f,g)(\pi(z)f,\pi(z)g)=T_zW_\cA(f,g)
	\]
	holds for all $f,g\in L^2(\rd)$ and all $z\in\rdd$.
\end{definition}

We have a complete characterization of symplectic matrices that give rise to covariant metaplectic Wigner distributions. We refer to \cite[Proposition 4.4]{CR2022} and \cite[Proposition 2.10]{CR2021}.

\begin{theorem}\label{charcovidddi}
	Let $\cA\in Sp(2d,\bR)$. $W_\cA$ is covariant if and only if the block decomposition (\ref{blockDA}) is in the form
	\[
		\cA=\begin{pmatrix}
		A_{11} & I-A_{11} & A_{13} & A_{13}\\
		A_{21} & -A_{21} & I-A_{11}^T & -A_{11}^T\\
		0 & 0 & I & I\\
		-I & I & 0 & 0
		\end{pmatrix},
	\]
	with $A_{13}=A_{13}^T$ and $A_{21}=A_{21}^T$.
\end{theorem}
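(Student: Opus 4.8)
The plan is to translate the covariance identity directly into a statement about the metaplectic operator $\mu(\cA)$ and then unwind it through the known action of metaplectic operators on time-frequency shifts. First I would observe that $W_\cA(\pi(z)f,\pi(z)g) = \mu(\cA)(\pi(z)f \otimes \overline{\pi(z)g})$, and that the tensor product of time-frequency shifts on $\rd$ is again a time-frequency shift on $\rdd$: writing $z = (z_1,z_2)$ and $w = (w_1,w_2)$, one has $\pi(z)f \otimes \overline{\pi(w)g} = c\,\pi(z_1,w_1,z_2,-w_2)(f\otimes\bar g)$ for a suitable unimodular constant $c$ (the conjugation flips the sign of the frequency component). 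Specializing to $w = z$, this gives $W_\cA(\pi(z)f,\pi(z)g) = c\,\mu(\cA)\pi(\zeta_z)(f\otimes\bar g)$ where $\zeta_z = (z_1,z_1,z_2,-z_2) \in \rdd\times\rdd$, i.e.\ $\zeta_z = E z$ for the fixed $4d\times 4d$ matrix $E$ with the block pattern sending $(z_1,z_2)\mapsto(z_1,z_1,z_2,-z_2)$.

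Next I would use the intertwining relation (\ref{muAdefP}), $\mu(\cA)\pi(\zeta)\mu(\cA)^{-1} = \pi(\cA\zeta)$ up to a phase, to move $\mu(\cA)$ past the time-frequency shift: $\mu(\cA)\pi(\zeta_z)(f\otimes\bar g) = c'\,\pi(\cA E z)\,\mu(\cA)(f\otimes\bar g) = c'\,\pi(\cA E z)\,W_\cA(f,g)$. On the other hand, the right-hand side of the covariance identity is $T_z W_\cA(f,g)$, where $T_z$ acts on $\rdd$, and $T_z = \pi(z,0) = \pi(z_1,z_2,0,0)$ in the $\pi$-notation on $\rdd$; write $z = (z_1,z_2) \mapsto (z_1,z_2,0,0) = Pz$ for the obvious $4d\times 4d$ projection-type matrix $P$. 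So covariance holds (up to the unavoidable phase, which matches because both sides reduce to $f\otimes\bar g$ composed with honest unitary operators and one checks the constants are equal) if and only if $\pi(\cA E z)$ and $\pi(Pz)$ act identically on every element of the range of $W_\cA$ for all $z$. Since that range is dense enough (it contains $\mu(\cA)$ applied to all $f\otimes\bar g$, hence a total set), this forces $\pi(\cA E z) = \pi(P z)$ for all $z\in\rdd$ up to phase, which by injectivity of $\pi$ modulo phase on the first $2d$ coordinates is equivalent to the linear condition $\cA E = P$ as $4d\times 2d$ matrices.

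Finally I would solve the linear system $\cA E = P$ together with the constraint $\cA \in Sp(2d,\bR)$. Writing $\cA$ in the block form (\ref{blockDA}) and spelling out $E$ (columns: first $2d$ columns have $I$ in rows $1$--$d$ and rows $d{+}1$--$2d$; last $2d$ columns have $I$ in rows $2d{+}1$--$3d$ and $-I$ in rows $3d{+}1$--$4d$) against $P$ (which has $I$ in the $(1,1)$ and $(2,2)$ block positions in a $2\times 2$ coarse blocking and zeros elsewhere, reading it as the map to $(z_1,z_2,0,0)$), the equation $\cA E = P$ fixes exactly the columns-combinations: $A_{11}+A_{12} = I$, $A_{21}+A_{22}=0$, $A_{31}+A_{32}=0$, $A_{41}+A_{42}=0$ from the first block of columns, and $A_{13}-A_{14}=0$, $A_{23}-A_{24}=0$, $A_{33}-A_{34}=I$ (after the $T_z$-side is carefully identified — here I must be careful which coarse block of $P$ is the identity), $A_{43}-A_{44}=0$ from the second. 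This already pins down the third and fourth block rows as $(0,0,I,I)$ and $(-I,I,0,0)$ after also invoking one or two of the symplectic identities (\ref{carSyymp}) to fix, e.g., that $A_{31}=0$ rather than merely $A_{31}=-A_{32}$; then the remaining symplectic conditions $A^TD - C^TB = I$ etc.\ force $A_{23} = I - A_{11}^T$, $A_{24} = -A_{11}^T$ and the symmetry relations $A_{13}=A_{13}^T$, $A_{21}=A_{21}^T$. The main obstacle I anticipate is purely bookkeeping: correctly identifying the matrices $E$ and $P$ (in particular the sign from conjugating $g$ and the exact placement of $T_z$ in $\pi$-coordinates on $\rdd$) and then not drowning in the $4\times4$ block algebra when combining $\cA E = P$ with the symplectic relations — the conceptual content is entirely in the first two paragraphs, and the third is a finite, if tedious, linear-algebra verification.
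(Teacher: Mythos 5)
Your strategy is the same as the paper's: rewrite covariance as $\pi(z)f\otimes\overline{\pi(z)g}=\pi(z_1,z_1,z_2,-z_2)(f\otimes\bar g)$, use the intertwining relation (\ref{muAdefP}) to reduce it to the linear condition $\cA(z_1,z_1,z_2,-z_2)=(z_1,z_2,0,0)$ for all $z$, and then intersect with the symplectic conditions; your added care about why the operator identity forces the matrix identity (totality of the $W_\cA(f,g)$ and injectivity of $\pi$ up to phase) is a point the paper passes over silently, and is welcome. The one concrete error sits exactly where you flagged it: since $T_z=\pi(z_1,z_2,0,0)$ (as you correctly state, with $P$ having identity blocks in positions $(1,1)$ and $(2,2)$), the second group of columns yields $A_{13}=A_{14}$, $A_{23}-A_{24}=I$, $A_{33}=A_{34}$, $A_{43}=A_{44}$; you instead wrote $A_{23}=A_{24}$ and $A_{33}-A_{34}=I$, which contradicts your own description of $P$ and, if carried through, would not produce the matrix in the statement. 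With that corrected, the symplectic relations (most cleanly in the form $AD^T-BC^T=I$ and $AB^T=BA^T$ for the coarse $2d\times 2d$ blocks of $\cA$) force $A_{31}=0$, $A_{41}=-I$, $A_{33}=I$, $A_{43}=0$ and then $A_{23}=I-A_{11}^T$, $A_{24}=-A_{11}^T$, $A_{13}=A_{13}^T$, $A_{21}=A_{21}^T$, exactly as claimed, so the rest of your outline goes through as the paper's proof does.
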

\begin{proof}
	Up to a phase factor, using the fact that $\pi(z)f\otimes\overline{\pi(z)g}=\pi(z_1,z_1,z_2,-z_2)(f\otimes\bar g))$ and (\ref{muAdefP}), we have
	\begin{align*}
		W_\cA(\pi(z)f,\pi(z)g)&=\mu(\cA)(\pi(z)f\otimes\overline{\pi(z)g})=\mu(\cA)(\pi(z_1,z_1,z_2,-z_2)(f\otimes\bar g))\\
		&=\pi(\cA(z_1,z_1,z_2,-z_2))W_\cA(f,g)
	\end{align*}
	for all $z=(z_{1},z_2)\in \rdd$ and all $f,g\in L^2(\rd)$. Since $T_z=\pi(z_1,z_2,0,0)$, we have the equivalence between covariance and
	\[
		\cA(z_1,z_1,z_2,-z_2)=(z_1,z_2,0,0),
	\]
	which gives
	\begin{equation}\label{covinter}
		\cA=\begin{pmatrix}
			A_{11} & I-A_{11} & A_{13} & A_{13}\\
			A_{21} & -A_{21} & I- A_{11}^T & A_{11}^T\\
			A_{31} & -A_{31} & A_{33} & A_{33}\\
			A_{41} & -A_{41} & A_{43} & A_{43}
		\end{pmatrix}.
	\end{equation}
	The assertion follows imposing conditions (\ref{carSyymp}) to (\ref{covinter}).
\end{proof}

\begin{theorem}
	Let $\cA\in Sp(2d,\bR)$ have block decomposition (\ref{blockDA}). $W_\cA$ belongs to the Cohen's class if and only if $W_\cA$ is covariant, in the which case
	\begin{equation}\label{covCohen}
		W_\cA(f,g)=W(f,g)\ast \Sigma_\cA,
	\end{equation}
	with $\Sigma_\cA=\cF^{-1}(\Phi_{-B_\cA})$ and
	\begin{equation}\label{defBA}
		B_\cA:=\begin{pmatrix}
			A_{13} & \frac{1}{2}I-A_{11}\\
			\frac{1}{2}I-A_{11}^T & -A_{21}
		\end{pmatrix}.
	\end{equation}
\end{theorem}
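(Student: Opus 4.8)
The plan is to prove the two directions separately, leveraging the covariance characterization already established in Theorem \ref{charcovidddi}. For the ``only if'' direction, I would first observe that any Cohen's distribution $Q(f,g) = W(f,g) \ast \sigma$ automatically inherits covariance from the classical Wigner distribution: since $W(\pi(z)f, \pi(z)g) = T_z W(f,g)$ and convolution commutes with translations, we get $Q(\pi(z)f,\pi(z)g) = T_z(W(f,g)) \ast \sigma = T_z(W(f,g)\ast\sigma) = T_z Q(f,g)$. Hence if $W_\cA$ belongs to the Cohen's class it is covariant, so by Theorem \ref{charcovidddi} its block decomposition has the stated special form with $A_{13}=A_{13}^T$ and $A_{21}=A_{21}^T$.

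For the ``if'' direction and the explicit formula, I would assume $W_\cA$ is covariant, so $\cA$ has the form given in Theorem \ref{charcovidddi}. The key idea is to factor the relation between $W_\cA$ and $W=W_{A_{1/2}}$: since both are metaplectic Wigner distributions, $W_\cA(f,g) = \mu(\cA)(f\otimes\bar g)$ and $W(f,g) = \mu(A_{1/2})(f\otimes\bar g)$, so $W_\cA(f,g) = \mu(\cA A_{1/2}^{-1}) W(f,g)$ up to a phase factor, where $A_{1/2}$ is the matrix $A_\tau$ from \eqref{Atau} at $\tau=1/2$. The plan is to compute the symplectic matrix $\cM := \cA A_{1/2}^{-1}$ explicitly using the block form of $\cA$ and the known inverse of $A_{1/2}$, and to show that $\cM$ is of the form $V_C^T$ (in the $2d$-dimensional sense, i.e. a ``lower-left'' free-type generator) composed with possibly the identity, so that $\mu(\cM) = \widehat{\Phi_{-C}} \ast \,\cdot\,$ by Example \ref{es22}(3). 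Concretely, I expect $\cM$ to turn out to be $V_{B_\cA}^T$ with $B_\cA$ as in \eqref{defBA}, so that $\mu(\cM)F = \widehat{\Phi_{-B_\cA}}\ast F = \cF^{-1}(\Phi_{-B_\cA}) \ast F = \Sigma_\cA \ast F$, yielding \eqref{covCohen}. The identification of the $\tfrac12 I - A_{11}$ and $A_{13}$, $A_{21}$ entries in $B_\cA$ should fall out of the matrix multiplication once the covariant form is substituted.

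The main obstacle I anticipate is the bookkeeping in the $4d\times 4d$ block computation of $\cA A_{1/2}^{-1}$: one must carefully invert $A_{1/2}$ (using the general formula $\cA^{-1} = \begin{pmatrix} D^T & -B^T\\ -C^T & A^T\end{pmatrix}$ applied to the $2d$-level block structure, being careful about which $d\times d$ sub-blocks play the roles of $A,B,C,D$), then multiply and verify that the upper-left $2d\times 2d$ block of the product is the identity, the upper-right is zero, the lower-right is the identity, and the lower-left is exactly $-2B_\cA$ or $B_\cA$ (the sign and factor depending on the precise normalization of $V_C$ in dimension $2d$ versus the convolution kernel convention). A secondary subtlety is that $V_C^T$ in Example \ref{es22}(3) is stated for $\mu(V_C^T)f = \widehat{\Phi_{-C}}\ast f$, so I must match conventions so that the kernel comes out as $\cF^{-1}(\Phi_{-B_\cA})$ rather than $\cF(\Phi_{-B_\cA})$; since $\Phi_{-C}$ is even, $\cF^{-1}\Phi_{-C} = \cF\Phi_{-C}$ up to the reflection, which resolves this harmlessly. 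Finally, one should note that all identities hold up to a unitary phase factor, which is irrelevant for the convolution identity \eqref{covCohen} since a global unimodular constant can be absorbed; I would remark on this explicitly to close the argument.
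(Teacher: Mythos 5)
Your proposal is correct and follows essentially the same route as the paper: the Cohen-class direction uses covariance of $W$ plus the fact that convolution commutes with translations, and the converse rests on the factorization $\cA=V_{B_\cA}^{T}A_{1/2}$ (your $\cM=\cA A_{1/2}^{-1}=V_{B_\cA}^{T}$) combined with Example \ref{es22}(3) and Theorem \ref{charcovidddi}, exactly as in the paper. One bookkeeping remark: $V_{B_\cA}^{T}=\bigl(\begin{smallmatrix} I & B_\cA\\ 0 & I\end{smallmatrix}\bigr)$ carries its nonzero off-diagonal block in the upper-right (not lower-left) position and the block is $B_\cA$ itself, but this is precisely the verification you flag and does not affect the argument.
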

\begin{proof}
	Since $W$ is covariant, if $W_\cA$ belongs to the Cohen's class, then
	\begin{align*}
		W_\cA(\pi(z)f,\pi(z)g)(x,\xi)&=W(\pi(z)f,\pi(z)g)\ast\Sigma_\cA(x,\xi)=(T_zW(f,g))\ast\Sigma_\cA(x,\xi)\\
		&=T_z(W(f,g)\ast\Sigma_\cA)(x,\xi).
	\end{align*}
	This proves also that, in general, Cohen's distributions are covariant, so it remains to prove that every covariant metaplectic Wigner distribution can be written as in (\ref{covCohen}). For, let $\cA\in Sp(2d,\bR)$ define a covariant $W_\cA$. An easy computation shows that 
	\begin{equation}\label{Cov1dec}
	\cA=V_{B_\cA}^TA_{1/2}. 
	\end{equation}
	Formula \ref{covCohen} follows using Example (\ref{es22}) (3), (\ref{Atau}) with $\tau=1/2$ and Theorem \ref{charcovidddi}. 
\end{proof}

\section{Characterization of Modulation Spaces via Metaplectic Wigner distributions}\label{sec:MS}
	Modulation spaces are usually introduced in terms of the STFT, as in paragraph \ref{subsec:MS}. However, as it is proved in \cite{MDG2011}, the classical Wigner distribution can be used for the purpose as well, i.e.
	\[
		f\in M^{p,q}(\rd) \qquad \Longleftrightarrow \qquad W(f,g)\in L^{p,q}(\rdd)
	\] 
	for some (hence, all) $g\in \cS(\rd)\setminus\{0\}$. In \cite{CR2022}, the authors prove that a particular classes of covariant metaplectic Wigner distributions, namely the covariant \textit{shift-invertible} Wigner distributions, play the same role in the more general case of weighted modulation spaces and suggest that shift-invertibility is the key property for a metaplectic Wigner distribution to characterize modulation spaces, but the proof of this conjecture is known only for modulation spaces $M^p_{v_s}$, $p\in[1,2]$ and $s\geq0$. 
	Here, we follow the pattern in \cite{CGR2022}, where the proof is extended to the \textit{metaplectic Wigner distributions of the classic type}. 
	
	\begin{remark}\label{rmark}
		The $\tau$-Wigner distribution $W_\tau$ of Examples \ref{exDist} (2) is shift-invertible if and only if $\tau\in(0,1)$. Actually, the Rihacek $W_0$ and the conjugate Rihacek distribution $W_1$ do not characterize modulation spaces, but 
		\[
		W_0(f,g)\in L^{p,q}_{v_s}(\rdd) \qquad \Longleftrightarrow \qquad f\in L^p_{v_s}(\rd) 
		\]
		and 
		\[
		W_1(f,g)\in L^{p,q}_{v_s}(\rdd) \qquad \Longleftrightarrow \qquad f\in \cF L^p_{v_s}(\rd).
		\]
		This justifies the intuition that shift-invertibility shall be fundamental for a metaplectic Wigner disribution to define modulation spaces.
		
		More generally, for given symplectic matrix $\cA\in Sp(2d,\bR)$, window $g\in \cS(\rd)\setminus\{0\}$, $p,q\in(0,+\infty]$ and $s\in\bR$, one can define
		\[
			M_{\cA,v_s}^{p,q}(\rd):=\Big\{f\in \cS'(\rd) : \norm{W_\cA(f,g)}_{L^{p,q}_{v_s}}<\infty \Big\}.
		\]
		Then, the question we address in this Section becomes whether $M_{\cA,v_s}^{p,q}(\rd)=M^{p,q}_{v_s}(\rd)$ for $\cA\in Sp(2d,\bR)$.
	\end{remark}
	
	\begin{definition}\label{defSI}
		Let $\cA\in Sp(2d,\bR)$. We say that $W_\cA$ is \textbf{shift-invertible} if
		\begin{equation}\label{defSIf}
			|W_\cA(\pi(w)f,g)|=|T_{E_\cA(w)}W_\cA(f,g)|, \qquad f,g\in L^2(\rd), \quad w\in \rdd
		\end{equation}
		for some matrix $E_\cA\in GL(2d,\bR)$, with
		\[
			T_{E_\cA(w)}W_\cA(f,g)(z):=W_\cA(f,g)(z-E_\cA(w)), \qquad z,w\in\rdd.
		\]
	\end{definition}
	
	We stress that Definition \ref{defSI} reflects the property of modulation spaces of being invariant with respect to time-frequency shifts. Moreover, if $W_\cA$ is shift-invertible with $\cA$ having decomposition (\ref{blockDA}), the direct computation of (\ref{defSIf}) shows that the matrix $E_\cA$ must be
	\begin{equation}\label{defEcA}
		E_\cA=\begin{pmatrix}
			A_{11} & A_{13}\\
			A_{21} & A_{23}
		\end{pmatrix}.
	\end{equation}
	In particular, if $\cA$ is also covariant, it must be
	\begin{equation}\label{EAcov}
		E_\cA=\begin{pmatrix}
			A_{11} & A_{13}\\
			A_{21} & I-A_{11}^T
		\end{pmatrix}.
	\end{equation}

	
	In \cite[Theorem 2.22]{CR2022} the authors show the following theorem for shift-invertible Wigner distributions as a first step towards the conjecture that shift-invertible Wigner distributions define modulation spaces. We point out that this result is partial in two senses: first, it only concerns $M^p_{v_s}$ spaces, and secondly the characterization is complete only for $1\leq p\leq2$. 
	
	\begin{theorem}\label{thrm}
		
		Let $g\in\cS(\rd)\setminus\{0\}$ and $\cA\in Sp(2d,\bR)$. \\
		(i) For $0<p\leq2$, if $f\in M^p_{v_s}(\rd)$, then $W_\cA(f,g)\in L^p_{v_s}(\rdd)$.\\
		(ii) If $W_\cA$ is also shift-invertible, then,\\
		(iia) For $s\geq0$, $1\leq p\leq2$, 
		\[
			f\in M^p_{v_s}(\rd) \qquad \Longleftrightarrow \qquad W_\cA(f,g)\in L^p_{v_s}(\rdd),
		\]
		with equivalence of norms $\norm{f}_{M^p_{v_s}}\asymp\norm{W_\cA(f,g)}_{L^p_{v_s}}$.\\	
		(iib) For $1\leq p\leq\infty$, if $W_\cA(f,g)\in L^p_{v_s}(\rdd)$, then $f\in M^p_{v_s}(\rd)$.\\
		(iic) For $0<p<1$, if $W_\cA(f,g)\in L^p_{v_s}(\rdd)$ and there exists a Gabor frame $\cG(\gamma,\Lambda)$ for $L^2(\rd)$ with $\gamma\in\cS(\rd)$ such that the sequence $W_\cA(f,\gamma)(\lambda)\in\ell^p_{v_s}(\Lambda)$, then $f\in M^p_{v_s}(\rd)$.
	\end{theorem}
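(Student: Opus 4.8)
The plan is to reduce everything to the classical case via a metaplectic change of variables, exploiting the structure theory of symplectic matrices recalled in the preliminaries. First I would establish part (i): for $0<p\le 2$, by Theorem~\ref{thm37} we already have $W_\cA(f,g)\in M^p_{v_s}(\rdd)$ whenever $f,g\in M^p_{v_s}(\rd)$, and the continuous embedding $M^p_{v_s\otimes 1}(\rdd)\hookrightarrow L^p_{v_s}(\rdd)$ valid for $0<p\le 2$ (recalled at the end of paragraph~\ref{subsec:MS}) finishes the job, since a Schwartz window $g$ lies in every $M^p_{v_s}$. Actually one must be slightly careful with the weight: $v_s$ on $\rdd$ (where now the ambient dimension is $2d$, so phase space is $\bR^{4d}$) dominates $v_s\otimes 1$, so $M^p_{v_s}(\rdd)\hookrightarrow M^p_{v_s\otimes 1}(\rdd)\hookrightarrow L^p_{v_s}(\rdd)$, giving (i).

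For part (ii), the key is that shift-invertibility encodes exactly the invariance of the $L^p_{v_s}$-norm of $W_\cA(f,g)$ under translations $f\mapsto\pi(w)f$ (up to the reshuffling matrix $E_\cA\in GL(2d,\bR)$, which is moderate-weight compatible because $v_s$ is $v_s$-moderate). The strategy for (iia) is: having (i), it suffices to prove the reverse inequality $\norm{f}_{M^p_{v_s}}\lesssim\norm{W_\cA(f,g)}_{L^p_{v_s}}$ for $1\le p\le 2$. I would use Proposition~\ref{prop36} to write the STFT $V_{g_3}f(w)$ as an inner product $\langle W_\cA(f,g_1),W_\cA(\pi(w)g_3,g_2)\rangle$, then move the time-frequency shift onto the first slot using shift-invertibility: $|W_\cA(\pi(w)g_3,g_2)(z)|=|W_\cA(g_3,g_2)(z-E_\cA(w))|$. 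This turns the STFT into a convolution-type expression $|V_{g_3}f(w)|\lesssim \big(|W_\cA(f,g_1)|\ast |W_\cA(g_3,g_2)|^\vee\big)(E_\cA(w))$ or similar, after an appropriate change of variable $z\mapsto z-E_\cA(w)$ under the integral. One then takes $L^p_{v_s}$ norms in $w$, changes variables $w\mapsto E_\cA^{-1}(w)$ (a linear isomorphism, absorbing $|\det E_\cA|$ and replacing $v_s$ by an equivalent moderate weight), and applies Young's inequality $L^p * L^1\subseteq L^p$ for $1\le p\le 2$ — with the weighted version using $v_s$-moderateness of the weight to split $v_s(a+b)\lesssim v_s(a)v_s(b)$ — noting that $W_\cA(g_3,g_2)\in L^1_{v_s}(\rdd)$ because $g_2,g_3\in\cS$ so by part (i) applied with a large exponent (or directly, Schwartz-ness of $W_\cA(g_3,g_2)$ from Proposition on $\cS$-boundedness) the kernel is in the weighted $L^1$. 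Combined with (i) this yields the norm equivalence.

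Parts (iib) and (iic) are the one-directional refinements and I would handle them by the same convolution estimate but pushing the integrability of the window harder: for (iib), $1\le p\le\infty$, Young still gives $\norm{V_{g_3}f}_{L^p_{v_s}}\lesssim\norm{W_\cA(f,g_1)}_{L^p_{v_s}}\norm{W_\cA(g_3,g_2)}_{L^1_{v_s}}<\infty$ since the second factor is finite for Schwartz windows and $1\le p\le\infty$ is exactly the range where $L^p\ast L^1\subseteq L^p$ holds, so $f\in M^p_{v_s}$. For (iic), $0<p<1$, Young's inequality fails, so one instead discretizes: expand $f$ over the Gabor frame $\cG(\gamma,\Lambda)$, use the frame expansion together with the hypothesis $W_\cA(f,\gamma)(\lambda)\in\ell^p_{v_s}(\Lambda)$ and the $\ell^p$-summability ($\ell^p\ast\ell^1\subseteq\ell^p$ holds for $0<p\le 1$ too, being essentially the triangle inequality for the $p$-norm) of the Gram-type coefficients $\langle\pi(\lambda)\gamma,\pi(\mu)\gamma\rangle$, which decay rapidly because $\gamma\in\cS$. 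This is the standard $0<p<1$ argument.

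The main obstacle is bookkeeping the translation-reshuffling matrix $E_\cA$ correctly through the change of variables and verifying that after substituting $w\mapsto E_\cA^{-1}(w)$ the weight $v_s\circ E_\cA$ is genuinely equivalent (as a moderate weight) to $v_s$ — this uses $|E_\cA(w)|\asymp|w|$ since $E_\cA$ is invertible, hence $v_s(E_\cA w)\asymp v_s(w)$. A secondary subtlety is that when $0<p<1$ the modulation ``norm'' is only a quasi-norm and the embedding/Young steps must be replaced by their $p<1$ analogues throughout, which is exactly why (iia) stops at $p\ge 1$ and (iic) needs the extra Gabor-frame hypothesis; being honest that the two-sided characterization is genuinely open for $p>2$ (only (i) goes that way) and for $0<p<1$ without the frame hypothesis is part of the statement and needs no proof.
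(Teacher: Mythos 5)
Your treatment of (i), (iia) and (iib) is essentially the paper's own proof: (i) is Theorem \ref{thm37} combined with $M^p_{v_s}(\rdd)\hookrightarrow M^p_{v_s\otimes 1}(\rdd)\hookrightarrow L^p_{v_s}(\rdd)$ for $0<p\le 2$, and the converse direction uses Proposition \ref{prop36} plus shift-invertibility to bound $|V_{g_1}f(w)|$ by the convolution $\bigl(|W_\cA(f,g_1)|\ast|\cI W_\cA(g_1,g_2)\circ E_\cA|\bigr)$ evaluated at $E_\cA w$, followed by weighted Young's inequality $L^p_{v_s}\ast L^1_{v_s}\hookrightarrow L^p_{v_s}$ for $1\le p\le\infty$ (this is (iib), and (iia) is then (i) together with (iib)); your remarks that $v_s\circ E_\cA\asymp v_s$ and that $W_\cA(g_1,g_2)\in\cS(\rdd)\subseteq L^1_{v_s}(\rdd)$ are exactly the ingredients the paper uses.

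The one genuine problem is in your sketch of (iic): the inequality $\ell^p\ast\ell^1\subseteq\ell^p$ is \emph{false} for $0<p<1$ (take $a=\delta_0\in\ell^p$ and $b\in\ell^1\setminus\ell^p$; then $a\ast b=b\notin\ell^p$), so as written that step fails. The inequality available in this range is $\ell^p_{v_s}\ast\ell^p_{v_s}\hookrightarrow\ell^p_{v_s}$ for $0<p\le 1$, $s\ge 0$, and this is what the paper uses: one invokes the discrete norm equivalence $\norm{f}_{M^p_{v_s}}\asymp\norm{V_\gamma f}_{\ell^p_{v_s}(\Lambda)}$, samples the same shift-invertibility estimate (Proposition \ref{prop36} with $g_3=\gamma$) on the lattice, and convolves the hypothesis sequence $W_\cA(f,\gamma)(\lambda)\in\ell^p_{v_s}(\Lambda)$ against the sampled kernel $W_\cA(\gamma,g_2)$, which lies in every $\ell^p_{v_s}(\Lambda)$ because $W_\cA(\gamma,g_2)\in\cS(\rdd)$. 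Note also that the relevant kernel is this sampled cross-Wigner distribution, not the Gram coefficients $\langle\pi(\lambda)\gamma,\pi(\mu)\gamma\rangle$ of the frame: the Gram matrix governs a change of window for STFT coefficients, whereas the crux here is converting sampled $W_\cA$ data into sampled STFT data, which only shift-invertibility provides. With $\ell^p\ast\ell^p$ in place of your $\ell^p\ast\ell^1$ and the kernel identified correctly, your discretization coincides with the paper's argument.
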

	\begin{proof} $(i)$ follows directly by Theorem \ref{thm37}, using the continuous inclusions $M^p_{v_s}(\rd)\hookrightarrow M^p(\rd)$ ($s\geq0$), $M^p_{v_s}(\rdd)\hookrightarrow M^p_{v_s\otimes 1}(\rdd)\hookrightarrow L^p_{v_s}(\rdd)$, the last inclusion holding for $0<p\leq2$. \\
	$(ii)$ Let $W_\cA$ be shift-invertible. We first prove $(iib)$, $(iia)$ follows directly by $(i)$ and $(iib)$. For, assume that $W_\cA(f,g)\in L^p_{v_s}(\rdd)$. Applying Proposition \ref{prop36} with $g_3=g_1$, for all $w\in \bR^{2d}$,
	\begin{align*}
		|V_{g_1}f(w)|&\lesssim \frac{1}{|\langle g_2,g_1 \rangle|}|\langle W_\cA(f,g_1),W_\cA(\pi(w)g_1,g_2) \rangle_{L^2(\rdd)}|\\
		&\lesssim \int_{\rdd}|W_\cA(f,g_1)(u)||W_\cA(\pi(w)g_1,g_2)(u)|du\\
		&\lesssim \int_{\rdd}\int_{\rdd}|W_\cA(f,g_1)(u)||W_\cA(g_1,g_2)(u-E_\cA w)|du\\
		&\lesssim \int_{\rdd}|W_\cA(f,g_1)(u)||\cI W_\cA(g_1,g_2)(E_\cA w-u)|du.
	\end{align*}
	Therefore, Young's inequalities for $1\leq p\leq\infty$ and the invertibility of $E_\cA$ give
	\begin{align*}
		\norm{f}_{M^p_{v_s}}&\asymp\norm{V_{g_1}f}_{L^p_{v_s}}\lesssim \norm{W_\cA(f,g_1)\ast(\cI W_\cA(g_1,g_2)\circ E_\cA)}_{L^p_{v_s}}\\
		&\leq\norm{W_\cA(f,g_1)}_{L^p_{v_s}}\norm{W_\cA(g_1,g_2)}_{L^1_{v_s}},
	\end{align*}
	where $\norm{W_\cA(g_1,g_2)}_{L^1_{v_s}}<\infty$ since $W_\cA(g_1,g_2)\in \cS(\rdd)\subseteq L^1_{v_s}(\rdd)$.\\
	$(iic)$ Using $\norm{f}_{M^p_{v_s}}\asymp\norm{V_\gamma f}_{\ell^p_{v_s}}$ and arguing analogously to $(iib)$, with $g_3=\gamma$ instead of $g_3=g_1$, we get
	\[
		\norm{f}_{M^p_{v_s}}\lesssim\norm{W_\cA(f,\gamma)\ast(\cI W_\cA(\gamma,g_2)\circ E_\cA)}_{\ell^p_{v_s}}.
	\]
	Then, using the discrete Young's convolution inequality ($\ell^p_{v_s}\ast\ell^p_{v_s}\hookrightarrow\ell^p_{v_s}$, $0<p\leq1$, $s\geq0$), we get
	\[
		\norm{f}_{M^p_{v_s}}\lesssim\norm{W_\cA(f,\gamma)}_{\ell^p_{v_s}}\norm{W_\cA(\gamma,g_2)}_{\ell^p_{v_s}}
	\]
	and the assertion follows, since $W_\cA(\gamma,g_2)\in \cS(\rdd)$ implies $\norm{W_\cA(\gamma,g_2)}_{\ell^p_{v_s}}<\infty$.
	\end{proof}
	
	A complete characterization of $M^{p,q}_{v_s}$ spaces in terms of metaplectic Wigner distributions can be given in terms of the so-called \textit{Wigner-decomposable matrices}, cf. \cite{CGR2022}. Namely, if we require $\cA$ to be Wigner-decomposable, then shift-invertibility allows to characterize all $M^{p,q}_{v_s}$ spaces.
	
	\begin{definition}
		A matrix $\cA\in Sp(2d,\bR)$ is \textbf{Wigner-decomposable} if $\cA=V_C\cA_{FT2}\cD_L$ for some $C\in \bR^{2d\times2d}$ symmetric and $L\in GL(2d,\bR)$. The corresponding metaplectic Wigner distribution $W_\cA$ is \textbf{of the classic type}.
	\end{definition}
	
	\begin{remark}
		The term \textit{of the classic type} refers to the fact that these metaplectic Wigner distributions are immediate generalization of the classical Wigner distribution, which is of the classic type with $C=0$ and 
		\[
			L=\begin{pmatrix}
				I & \frac{1}{2}I\\
				I & -\frac{1}{2}I
			\end{pmatrix}.
		\]
		In \cite{CGR2022}, the authors define totally Wigner-decomposable matrices as Wigner-decomposable matrices with $C=0$. In \cite{CT2019, Z}, the same distributions are called \textbf{matrix Wigner distributions}.
	\end{remark}
	
	We turn to modulation spaces. We first need a definition.
	\begin{definition} A matrix $L\in \bR^{2d\times 2d}$ with block decomposition
	\begin{equation}\label{defL}
		L=\begin{pmatrix}
			L_{11} & L_{12}\\
			L_{21} & L_{22}
		\end{pmatrix},
	\end{equation}
	 is \textbf{right-regular} if $L_{12},L_{22}\in GL(d,\bR)$. \end{definition}
	 We compare right-regularity condition for matrices with shift-invertibility for metaplectic Wigner distributions.
	 
	 \begin{remark}
	 	If $L$ is right-regular, $L^{-T}$ is left-regular. Indeed, writing $L$ as in (\ref{defL}), by \cite[Theorem 2.1]{LS} with $L_{22}$ invertible,
		\[
			L^{-T}=\begin{pmatrix}
				(L_{11}-L_{12}L_{22}^{-1}L_{21})^{-T} & -(L_{11}-L_{12}L_{22}^{-1}L_{21})^{-T}L_{21}^TL_{22}^{-T}\\
				-L_{22}^{-T}L_{12}^T(L_{11}-L_{12}L_{22}^{-1}L_{21})^{-T} & L_{22}^{-T}+L_{12}^T(L_{11}-L_{12}L_{22}^{-1}L_{21})^{-T}L_{21}^TL_{22}^{-T}
			\end{pmatrix}
		\]
		with $\det(L)=\det(L_{22})\det(L_{11}-L_{12}L_{22}^{-1}L_{21})$. The invertibility of $L$ together with this determinant identity give the invertibility of $L_{11}-L_{12}L_{22}^{-1}L_{21}$, hence of the upper left-block. Since also $L_{12}$ is invertible, we get the invertibility of the lower-left block of $L^{-1}$ as well. We point out that the converse is also true, cf. \cite{Bayer}.
	 \end{remark}
	 
	 \begin{remark}\label{remprop43}
	 	Let $L\in GL(2d,\bR)$. Writing $L$ as in (\ref{defL}), computing explicitly $\cA_{FT2}\cD_L$, using conditions (\ref{carSyymp}) and comparing the blocks, it is possible to verify that the matrix $\cA=\cA_{FT2}\cD_L$ has block decomposition 
		\begin{equation}
			\cA=\begin{pmatrix}
				A_{11} & A_{12} & 0 & 0\\
				0 & 0 & A_{23} & A_{24}\\
				0 & 0 & A_{33} & A_{34}\\
				A_{41} & A_{42} & 0 & 0
			\end{pmatrix}
		\end{equation}
		with
		\begin{equation}\label{DeffL}
			L=\begin{pmatrix}
				A_{33}^T & A_{23}^T\\
				A_{34}^T & A_{24}^T
			\end{pmatrix}
		\end{equation}
		and that
		\begin{equation}\label{Lm1}
			L^{-1}=\begin{pmatrix}
				A_{11} & A_{12}\\
				-A_{41} & -A_{42}
			\end{pmatrix}.
		\end{equation}
		We refer to \cite[Proposition 4.3]{CGR2022} for the details.
	 \end{remark}
	
	\begin{lemma}\label{lemmaG}
		Let $\cA\in Sp(2d,\bR)$ be a Wigner-decomposable matrix with $\cA=V_C\cA_{FT2}\cD_L$ and block decomposition (\ref{blockDA}). The following statements are equivalent.\\
		(i) $L$ is right regular,\\
		(ii) $W_\cA$ is shift-invertible.\\
		In this case,
		\begin{equation}\label{espWAWD}
			W_\cA(f,g)(x,\xi)=\sqrt{|\det(L)|}|\det(A_{23})|^{-1}\Phi_C(x,\xi)e^{2\pi iA_{23}^{-1}\xi\cdot A_{33}^Tx}V_{\tilde g}f(c(x),d(\xi)),
		\end{equation}
		where
		\[
			\tilde g(t)=g(A_{24}^TA_{23}^{-T}t), \quad c(x)=(A_{33}-A_{34}A_{24}^{-1}A_{23})^Tx \quad and \quad d(\xi)=A_{23}^{-1}\xi.
		\]
	\end{lemma}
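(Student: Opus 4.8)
The plan is to reduce everything to an explicit computation of $W_\cA(f,g)=\mu(V_C)\mu(\cA_{FT2})\mu(\cD_L)(f\otimes\bar g)$ using the known formulas from Example \ref{es22}, and then read off both the shift-invertibility statement and the closed form \eqref{espWAWD} from the resulting integral. First I would apply $\mu(\cD_L)$ to $f\otimes\bar g$: by item (2) of Example \ref{es22}, $\mu(\cD_L)(f\otimes\bar g)(x,\xi)=|\det(L)|^{1/2}(f\otimes\bar g)(L(x,\xi))$, and using the block decomposition \eqref{defL} this is $|\det(L)|^{1/2}f(L_{11}x+L_{12}\xi)\overline{g(L_{21}x+L_{22}\xi)}$. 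Next I would apply $\mu(\cA_{FT2})=\cF_2$ (item (5)), i.e. a partial Fourier transform in the second $\rd$-variable; after a linear change of variables in the integration variable (legitimate because $L_{22}\in GL(d,\bR)$, which is exactly where right-regularity enters) this produces a partial Fourier transform of the translate/dilate of $\bar g$ against a modulated dilate of $f$, which after collecting phases is recognizable as a short-time Fourier transform $V_{\tilde g}f$ evaluated at a linearly transformed point, with $\tilde g(t)=g(A_{24}^TA_{23}^{-T}t)$ as claimed (here the identifications $L_{12}=A_{23}^T$, $L_{22}=A_{24}^T$ from Remark \ref{remprop43} are used). Finally $\mu(V_C)$ merely multiplies by the chirp $\Phi_C$ (item (3)), contributing the $\Phi_C(x,\xi)$ factor.

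To pin down the constants and the arguments $c(x),d(\xi)$, I would keep careful track of the Jacobian $|\det(L_{22})|^{-1}=|\det(A_{24})|^{-1}$ arising from the change of variables, combine it with the $|\det(L)|^{1/2}$ prefactor, and use the symplectic determinant identity $\det(L)=\det(L_{22})\det(L_{11}-L_{12}L_{22}^{-1}L_{21})$ from the right-regularity remark together with the explicit block relations in Remark \ref{remprop43}. The Schur-complement combination $L_{11}-L_{12}L_{22}^{-1}L_{21}$ translated through \eqref{DeffL}--\eqref{Lm1} is what yields $c(x)=(A_{33}-A_{34}A_{24}^{-1}A_{23})^Tx$, while $d(\xi)=A_{23}^{-1}\xi$ comes from inverting the map in the frequency slot and using $L_{12}=A_{23}^T$; the cross-term phase $e^{2\pi i A_{23}^{-1}\xi\cdot A_{33}^Tx}$ is the leftover quadratic phase from completing the partial Fourier integral. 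The normalization constant $|\det(A_{23})|^{-1}$ should emerge after rewriting $|\det(A_{24})|^{-1}|\det(L)|^{1/2}$ via these determinant identities; this is a bookkeeping step I would do at the end rather than inline.

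For the equivalence (i)$\Leftrightarrow$(ii), once \eqref{espWAWD} is established the implication (i)$\Rightarrow$(ii) is immediate: replacing $f$ by $\pi(w)f$ shifts the STFT argument $V_{\tilde g}f(c(x),d(\xi))$ by a fixed vector depending linearly and invertibly on $w$ (because the STFT is covariant up to phase and both $c$ and $d$ are invertible linear maps when $L$ is right-regular — invertibility of $c$ following from invertibility of the Schur complement, invertibility of $d$ from invertibility of $A_{23}=L_{12}^T$), and the chirp prefactors contribute only a unimodular phase, so $|W_\cA(\pi(w)f,g)|=|T_{E_\cA(w)}W_\cA(f,g)|$ with $E_\cA$ the invertible matrix from \eqref{defEcA}. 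For (ii)$\Rightarrow$(i) I would argue contrapositively: if $L$ is not right-regular, then $L_{12}$ or $L_{22}$ is singular, which by Remark \ref{remprop43} forces $A_{23}$ or $A_{24}$ singular; inspecting \eqref{defEcA} and the structure of $\cA=\cA_{FT2}\cD_L$ one sees $E_\cA$ fails to be invertible, and in fact the dependence on $w$ degenerates so that \eqref{defSIf} cannot hold for any $E_\cA\in GL(2d,\bR)$ — one can exhibit a direction $w$ along which $|W_\cA(\pi(w)f,g)|$ is unchanged, contradicting shift-invertibility.

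The main obstacle will be the constant- and phase-chasing through the two successive linear substitutions, especially reconciling the Jacobian $|\det(A_{24})|^{-1}$ with the stated $|\det(A_{23})|^{-1}$; this requires the symplectic relations \eqref{carSyymp} applied to the block form in Remark \ref{remprop43} and is the one place where a sign or a transpose is easy to misplace. The structural parts — that right-regularity is precisely what makes the relevant substitution and the maps $c,d$ invertible, and hence equivalent to shift-invertibility — are conceptually clean once the explicit formula is in hand.
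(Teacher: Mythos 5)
Your proposal is correct and follows essentially the same route as the paper: both rest on writing out $\mu(V_C)\circ\cF_2\circ\mu(\cD_L)$ applied to $f\otimes\bar g$ to obtain (\ref{espWAWD}), on reducing shift-invertibility to the invertibility of the matrix $E_\cA$ in (\ref{defEcA}), and on the block identities of Remark \ref{remprop43} (your map $c$ being invertible is the invertibility of the Schur complement, i.e.\ of $A_{11}$). The one step you gloss over in the converse direction --- that $A_{24}$ singular with $A_{23}$ invertible already forces $A_{11}$, hence $E_\cA$, to be singular --- is exactly the identity $A_{11}A_{23}^T=-A_{12}A_{24}^T$ obtained from $L^{-1}L=I$, which is precisely how the paper's proof closes that implication.
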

	\begin{proof}
	Observe that $W_{V_C\cA_{FT2}\cD_L}$ is shift-invertible if and only if $W_{\cA_{FT2}\cD_L}$ is shift-invertible, since the modulus in the definition of shift-invertibility drops the chirp function given by $\mu(V_C)$. Therefore, to prove the equivalence between $(i)$ and $(ii)$ we assume $L=0$ without loss of generality.
	
	The direct computation in \cite{CR2021} shows that for all $w=(w_1,w_2)\in\rdd$ and all matrices $\cA\in Sp(2d,\bR)$, 
	\[
		|W_\cA(\pi(w)f,g)|=|T_{(A_{11}w_1+A_{13}w_2,A_{21}w_1+A_{23}w_2)}W_\cA(f,g)|=|T_{E_\cA(w)}W_\cA(f,g)|.
	\]
	Hence, $W_\cA$ is shift-invertible if and only if $E_\cA$ defined as in (\ref{defEcA}) is invertible. Also, Remark \ref{remprop43} tells that $L$ is right-regular if and only if $A_{23}$ and $A_{24}$ are invertible. We have all the ingredients to prove the equivalence.\\
	$(i)\Longrightarrow(ii)$ Assume that $A_{23}$ and $A_{24}$ are invertible. $W_\cA$ is shift-invertible if and only if $E_\cA$ is invertible. The matrix $E_\cA$ for totally-Wigner decomposable matrices is 
	\[
		E_\cA=\begin{pmatrix}
			A_{11} & 0\\
			0 & A_{23}
		\end{pmatrix}
	\]
	and $A_{23}$ is invertible, so it remains to show that $A_{11}$ is also invertible. Since $L$ is right-regular, $L^{-T}$ is left-regular and the assertion follows by the fact that $A_{11}^T$ is one of the left blocks of $L^{-T}$, by (\ref{Lm1}).\\
	$(ii)\Longrightarrow$ To prove the converse, assume that $W_\cA$ is shift-invertible or, equivalently, that $E_\cA$ is invertible. Then, $A_{11}$ and $A_{23}$ are invertible. By the identity $A_{11}A_{23}^T=-A_{12}A_{24}^T$ (that follows by $L^{-1}L=I$ using Remark \ref{remprop43}), it follows that $A_{12}$ and $A_{24}$ are invertible. Hence, $A_{23}$ and $A_{24}$ are both invertible.
	
	Finally, formula (\ref{espWAWD}) follows writing explicitly the operators $\mu(V_C),\cF_2$ and $\mu(\cD_L)$.

	\end{proof}
	
		 Not only right-regularity characterizes shift-invertibility, but it also characterizes matrix Wigner distributions that are continuous functions. For the following result we refer to \cite[Corollary 1.2.6 and Theorem 1.2.9]{Bayer}
	 
	 \begin{theorem}
	 	If $L$ is right-invertible and $\cA=\A_{FT2}\cD_L$ is a matrix Wigner distribution, then $W_\cA(f,g)$ is uniformly continuous for all $f,g\in L^2(\rd)$. Moreover, if $\det(L_{22})\neq0$, but $\det(L_{12})=0$, then there exist $f,g\in L^2(\rd)$ such that $W_\cA(f,g)$ is not continuous. 
	 \end{theorem}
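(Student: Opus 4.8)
The plan is to treat the two assertions separately, using closed-form expressions for matrix Wigner distributions; write $\cA=\cA_{FT2}\cD_L$ throughout and decompose $L$ into $d\times d$ blocks as in (\ref{defL}). For the first assertion I would read uniform continuity directly off formula (\ref{espWAWD}) in Lemma \ref{lemmaG}, which under right-regularity of $L$ (and with $C=0$) represents $W_\cA(f,g)(x,\xi)$ as a nonzero constant times the unimodular continuous chirp $e^{2\pi i A_{23}^{-1}\xi\cdot A_{33}^T x}$ times $V_{\tilde g}f(c(x),d(\xi))$, where $\tilde g\in L^2(\rd)$ and $(x,\xi)\mapsto(c(x),d(\xi))$ is an invertible linear map of $\rdd$. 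The only external input needed is that $V_{\tilde g}f\in C_0(\rdd)$ for all $f,\tilde g\in L^2(\rd)$: approximating $f,\tilde g$ in $L^2$ by Schwartz functions and using $|V_gf|\le\norm{f}_2\norm{g}_2$ exhibits $V_{\tilde g}f$ as a uniform limit of functions in $\cS(\rdd)\subseteq C_0(\rdd)$. Composing with the linear isomorphism $(c,d)$ stays inside $C_0(\rdd)$, and multiplying by the unimodular continuous chirp changes neither continuity nor the modulus; hence $W_\cA(f,g)\in C_0(\rdd)$, and in particular it is uniformly continuous. The decay is genuinely used here: a quadratic chirp times a bounded, uniformly continuous, non-decaying function need not be uniformly continuous.

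For the second assertion formula (\ref{espWAWD}) is no longer available, so I would unwind $\mu(\cA)$ by hand. From $\mu(\cA_{FT2})=\cF_2$ and $\mu(\cD_L)h=|\det L|^{1/2}h(L\cdot)$ (Example \ref{es22}) one obtains, up to a global phase and for $f\in L^2(\rd)$ bounded with compact support and $g\in\cS(\rd)$ — a class on which the integral below converges absolutely and on which the identity extends from $\cS\times\cS$ by a standard density/Fubini argument —
\[
	W_\cA(f,g)(x_1,x_2)=|\det L|^{1/2}\int_{\rd}f(L_{11}x_1+L_{12}y)\overline{g(L_{21}x_1+L_{22}y)}\,e^{-2\pi i x_2\cdot y}\,dy.
\]
Substituting $y=L_{22}^{-1}u$ (valid since $\det L_{22}\neq0$) and then translating in $u$ recasts the right-hand side, up to a nonzero constant and a unimodular continuous chirp $e^{2\pi i\langle R'x_1,x_2\rangle}$, as $\int_{\rd}f(Px_1+Nv)\overline{g(v)}\,e^{-2\pi i\langle L_{22}^{-T}x_2,v\rangle}\,dv$, where $N:=L_{12}L_{22}^{-1}$ is \emph{not} invertible (since $\det L_{12}=0$) while the Schur complement $P:=L_{11}-L_{12}L_{22}^{-1}L_{21}$ \emph{is} invertible, because $\det L=\det(L_{22})\det(P)\neq0$. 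Composing with the homeomorphism $(x_1,x_2)\mapsto(P^{-1}x_1,L_{22}^{T}x_2)$ of $\rdd$, it then suffices to exhibit $f,g$ for which
\[
	\Phi(x_1,x_2):=e^{2\pi i\langle R''x_1,x_2\rangle}\int_{\rd}f(x_1+Nv)\overline{g(v)}\,e^{-2\pi i\langle x_2,v\rangle}\,dv
\]
is \emph{not} almost everywhere equal to a continuous function.

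To produce such $f,g$, set $K:=\ker N$ (of dimension $k\ge1$) and $W:=\mathrm{ran}\,N$ (of dimension $d-k<d$), split $\rd=K\oplus K^\perp$ for the variable $v$ and $\rd=W\oplus W^\perp$ for the variable $x_1$, and choose $g=g_1\otimes g_2$ in the $K\oplus K^\perp$ splitting with $g_1,g_2$ strictly positive Gaussians and $f=f_1\otimes f_2$ in the $W\oplus W^\perp$ splitting with $f_1=\mathbf{1}_{B_W}$ and $f_2=\mathbf{1}_{B_{W^\perp}}$, where $B_W\subset W$ and $B_{W^\perp}\subset W^\perp$ are balls about the origin. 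Integrating out the $K$-component of $v$ and substituting $u=Nv$ on $K^\perp$ (an isomorphism onto $W$) factors $\Phi=\Theta\cdot\mathbf{1}_{B_{W^\perp}}(\pi_{W^\perp}x_1)$, where $\Theta$ is the product of the chirp, a strictly positive Gaussian in the $K$-component of $x_2$, and — up to another continuous chirp — $V_hf_1(\pi_W x_1,\eta(x_2))$, with $h$ a strictly positive Gaussian on $W$ and $\eta$ linear in the $K^\perp$-component of $x_2$. Thus $\Theta$ is continuous on $\rdd$; and since $V_hf_1(y,0)=\int_{B_W}h(t-y)\,dt>0$ and the Gaussian factor is positive, $\Theta$ is nowhere zero on a small bounded open set $\Omega$ chosen around a point $p_0$ whose $x_1$-component has $W^\perp$-part on $\partial B_{W^\perp}$. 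Any ball about $p_0$ meets both $\{|\pi_{W^\perp}x_1|<r_0\}$ and its complement in sets of positive measure ($r_0$ being the radius of $B_{W^\perp}$), and on these $\Phi$ equals $\Theta$ and $0$ almost everywhere respectively; so if $\Phi$ coincided a.e.\ near $p_0$ with a continuous function $h^*$, continuity of $h^*$ and $\Theta$ would force $h^*(p_0)=\Theta(p_0)$ and simultaneously $h^*(p_0)=0$, contradicting $\Theta(p_0)\neq0$. Hence $W_\cA(f,g)$ is not almost everywhere equal to a continuous function.

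The hard part will be the bookkeeping in this second half, and it is a matter of rigor rather than of ideas: because $W_\cA(f,g)$ is a priori only an $L^2$-class, ``not continuous'' must be interpreted as ``not almost everywhere equal to a continuous function'', which forces one (i) to keep the pointwise operations with $f$ meaningful — handled by taking $f$ bounded with compact support and tensored so that the discontinuous factor $\mathbf{1}_{B_{W^\perp}}$ passes untouched through the $v$-integration — and (ii) to check carefully that the surviving prefactor $\Theta$ is genuinely continuous and non-vanishing on an open set, which is exactly what the Gaussian choices together with the invertibility of the Schur complement $P$ are designed to secure.
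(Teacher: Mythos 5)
Your proposal is correct, and it is worth separating the two halves when comparing it to the paper. For the first assertion you follow the same route as the paper (read everything off formula (\ref{espWAWD})), but you add a point the paper's one-line appeal to ``uniform continuity of the STFT'' glosses over: the factor $e^{2\pi i A_{23}^{-1}\xi\cdot A_{33}^T x}$ is continuous but not uniformly continuous, so uniform continuity of $V_{\tilde g}f$ alone does not immediately transfer to the product; your observation that $V_{\tilde g}f\in C_0(\rdd)$ (by density of $\cS$ and $|V_gf|\le\norm{f}_2\norm{g}_2$), that $C_0$ is stable under multiplication by bounded continuous functions and under linear changes of variables, and that $C_0$ functions are automatically uniformly continuous, is exactly the right repair and makes the first half airtight. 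For the second assertion the paper proves nothing itself and simply cites Bayer's thesis for a constructive counterexample, whereas you supply a complete, self-contained construction: unwinding $\mu(\cA)=\cF_2\circ\mu(\cD_L)$, isolating the singular matrix $N=L_{12}L_{22}^{-1}$ and the invertible Schur complement $P=L_{11}-L_{12}L_{22}^{-1}L_{21}$, and tensoring $f$ along $\mathrm{ran}\,N\oplus(\mathrm{ran}\,N)^\perp$ so that the indicator factor in the $(\mathrm{ran}\,N)^\perp$ variable survives the $v$-integration untouched while the remaining factor $\Theta$ is continuous and nonvanishing near a boundary point; the a.e.\ argument ruling out a continuous representative is handled correctly. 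This buys the reader an actual proof rather than an external citation, at the cost of the bookkeeping you acknowledge. Only one tiny imprecision: when you choose the point $p_0$, you constrain only its $x_1$-component; to invoke $V_hf_1(y,0)>0$ you should also fix the $K^\perp$-part of its $x_2$-component to be $0$ (or otherwise ensure $\Theta(p_0)\neq0$), which is of course always possible since the two components are chosen independently.
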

	 \begin{proof}
	 	The first part follows by (\ref{espWAWD}) and the uniform continuity of the STFT. The second part is constructive and can be found in \cite[Theorem 1.2.9]{Bayer}.
	 \end{proof}
	
	The following theorem proves that the shift-invertibility property characterizes all modulation spaces at least in the case of Wigner-decomposable matrices.
	
	\begin{theorem}
		Let $\cA\in Sp(2d,\bR)$ be a Wigner-decomposable matrix with $\cA=V_C\cA_{FT2}\cD_L$, with $L$ right-regular. Then, for all $g\in\cS(\rd)\setminus\{0\}$ and $0<p,q\leq\infty$, 
		\[
			f\in M^{p,q}_{v_s}(\rd)\qquad \Longleftrightarrow\qquad W_\cA(f,g)\in L^{p,q}_{v_s}(\rdd).
		\]
		For $1\leq p,q\leq\infty$, the window $g$ can be chosen in the wider class $M^1_{v_s}(\rd)\setminus\{0\}$.
	\end{theorem}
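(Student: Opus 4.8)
The plan is to reduce everything to the characterization of $M^{p,q}_{v_s}$ via the STFT by exploiting the explicit formula \eqref{espWAWD} from Lemma \ref{lemmaG}. First I would note that since $L$ is right-regular, Lemma \ref{lemmaG} applies and gives
\[
	W_\cA(f,g)(x,\xi)=\sqrt{|\det(L)|}|\det(A_{23})|^{-1}\Phi_C(x,\xi)e^{2\pi iA_{23}^{-1}\xi\cdot A_{33}^Tx}V_{\tilde g}f(c(x),d(\xi)),
\]
with $\tilde g(t)=g(A_{24}^TA_{23}^{-T}t)$, $c(x)=(A_{33}-A_{34}A_{24}^{-1}A_{23})^Tx$ and $d(\xi)=A_{23}^{-1}\xi$. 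The key observation is that $|W_\cA(f,g)(x,\xi)|$ equals a nonzero constant times $|V_{\tilde g}f(c(x),d(\xi))|$, since the unimodular factors $\Phi_C$ and the exponential drop out when taking the modulus. Moreover $\tilde g\in\cS(\rd)\setminus\{0\}$ (respectively $\tilde g\in M^1_{v_s}(\rd)\setminus\{0\}$ when $g$ is taken there, using that $M^1_{v_s}$ is invariant under the invertible linear change of variables $t\mapsto A_{24}^TA_{23}^{-T}t$), so $V_{\tilde g}$ is an admissible window for defining the modulation space (quasi-)norm — here $A_{24}$ is invertible by right-regularity, so $\tilde g$ is genuinely a rescaling of $g$.

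The core step is then a change of variables in the mixed-norm integral. Writing $E:=\begin{pmatrix}c' & 0\\ 0 & d'\end{pmatrix}$ where $c'=(A_{33}-A_{34}A_{24}^{-1}A_{23})^T$ and $d'=A_{23}^{-1}$ are both invertible (invertibility of $A_{23}$ is shift-invertibility; invertibility of $A_{33}-A_{34}A_{24}^{-1}A_{23}$ follows since it is, up to transposition, a Schur-complement block appearing in $E_\cA$ or can be read off from the block relations in Remark \ref{remprop43}), the substitution $(u,\eta)=(c'x,d'\xi)$ turns $\norm{W_\cA(f,g)}_{L^{p,q}_{v_s}}$ into a constant times $\norm{V_{\tilde g}f}_{L^{p,q}_{w}}$ for a weight $w$ equivalent to $v_s$, because $v_s\circ E^{-1}\asymp v_s$ (any invertible linear map distorts $v_s$ only up to a multiplicative constant). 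Since the change of variables is a product of separate invertible maps in the $x$- and $\xi$-variables, the $L^{p,q}$ structure (integrating in $x$ first, then $\xi$) is preserved up to equivalence of quasi-norms. Thus $W_\cA(f,g)\in L^{p,q}_{v_s}(\rdd)$ if and only if $V_{\tilde g}f\in L^{p,q}_{v_s}(\rdd)$, which by the definition of modulation spaces in paragraph \ref{subsec:MS} (and window-independence of the norm) is equivalent to $f\in M^{p,q}_{v_s}(\rd)$.

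The main obstacle I anticipate is bookkeeping the weight and the mixed-norm structure under the change of variables in the quasi-Banach range $0<\min\{p,q\}<1$, where one must be careful that window-independence of the $M^{p,q}_{v_s}$ (quasi-)norm still holds and that $\tilde g$ is an admissible window — this is fine since $\tilde g\in\cS(\rd)\setminus\{0\}$, but the argument for replacing $g$ by the larger class $M^1_{v_s}(\rd)$ is only valid for $1\le p,q\le\infty$, which is why the statement restricts the window enlargement to that range; there one additionally needs that the $L^{p,q}_{v_s}$-boundedness and the convolution/pointwise estimates used implicitly in window-change arguments (as in the proof of Theorem \ref{thrm}) go through with $M^1_{v_s}$ windows, which is standard. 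A secondary point to verify carefully is that $c'$ is invertible: this should be extracted from the symplectic relations \eqref{carSyymp} applied to the block form of $\cA_{FT2}\cD_L$ in Remark \ref{remprop43}, since $c'$ agrees up to transposition with the $(1,1)$-block of the shift matrix $E_\cA$, and $A_{11}$ is invertible precisely when $W_\cA$ is shift-invertible.
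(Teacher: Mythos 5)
Your proposal is correct and follows exactly the route the paper takes: the paper's proof of this theorem is simply the observation that the statement is a direct consequence of formula (\ref{espWAWD}) (with details deferred to \cite[Corollary 4.14]{CGR2022}), and your argument fills in precisely those details — dropping the unimodular chirp factors, performing the block-diagonal linear change of variables with $v_s\circ E\asymp v_s$, and invoking window-independence of the $M^{p,q}_{v_s}$ (quasi-)norm for $\tilde g\in\cS(\rd)\setminus\{0\}$ (resp.\ $M^1_{v_s}$ in the Banach range). The only minor imprecision is the identification of $c'=(A_{33}-A_{34}A_{24}^{-1}A_{23})^T$ with a block of $E_\cA$: in fact $c'=A_{11}^{-1}$ by Remark \ref{remprop43}, and its invertibility also follows directly from $\det(L)=\det(A_{24})\det(A_{33}-A_{34}A_{24}^{-1}A_{23})\neq0$, so the conclusion stands.
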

	\begin{proof}
	It is a direct consequence of (\ref{espWAWD}). We refer to \cite[Corollary 4.14]{CGR2022} for a more detailed proof.
	\end{proof}
	
\section{Lieb's Uncertainty Principles for classical Wigner distributions} \label{sec:UP}
	Wigner distributions of the classic type $W_\cA(f,g)$, having $\cA=V_C\cA_{FT2}\cD_L$, $C\in\bR^{2d\times2d}$ symmetric and $L\in GL(2d,\bR)$ right-regular, are basically STFT, cf. (\ref{espWAWD}). For this reason, uncertainty principles can be easily carried from STFT to matrix Wigner distributions. For the sake of simplicity, we limit ourselves to report on the trivial cases that extend the already known uncertainty principles for STFT, but it is mandatory to mention that many authors are studying uncertainty principles in the context of time-frequency distributions in the metaplectic framework, cf. \cite{Z2, ZH}.
	
	We first state a generalization of the weak uncertainty principle for matrix Wigner distributions. For the rest of this section, $\cA$ is a Wigner-decomposable matrix, with blocks as in (\ref{blockDA}), such that $L$ (which is given by its expression in (\ref{DeffL})) is right-regular.
	
	\begin{theorem}\label{thmLieb1}
		If $\norm{f}_2=\norm{g}_2=1$, and $U\subseteq\rdd$ and $\varepsilon>0$ are such that
		\[
			\int_U|W_\cA(f,g)(x,\xi)|^2dxd\xi\geq1-\varepsilon,
		\]
		then $|U|\geq 1-\varepsilon$.
	\end{theorem}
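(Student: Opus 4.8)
The plan is to transport the elementary weak uncertainty principle for the STFT to $W_\cA$ by means of the factorization (\ref{espWAWD}). For the STFT the statement is immediate: if $\norm{f}_2=\norm{h}_2=1$ and $\int_V|V_hf(u,v)|^2\,du\,dv\geq1-\varepsilon$ for some measurable $V\subseteq\rdd$, then $|V|\geq1-\varepsilon$, because Moyal's identity forces $\norm{V_hf}_{L^2(\rdd)}=1$ while the pointwise bound $|V_hf(u,v)|\leq\norm{f}_2\norm{h}_2=1$ gives $1-\varepsilon\leq\int_V|V_hf|^2\leq|V|$. So it suffices to rewrite $\int_U|W_\cA(f,g)|^2$, up to a linear change of variables, as an integral of $|V_hf|^2$ over the image of $U$, with $h$ an $L^2$-normalized window, and to track the resulting Jacobian.

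First I would invoke Lemma \ref{lemmaG}: since $\cA=V_C\cA_{FT2}\cD_L$ is Wigner-decomposable with $L$ right-regular, (\ref{espWAWD}) applies, and since $|\Phi_C|\equiv1$ and the exponential factor there is unimodular, taking absolute values yields the pointwise identity $|W_\cA(f,g)(x,\xi)|=\kappa\,|V_{\tilde g}f(\Psi(x,\xi))|$, with $\kappa=\sqrt{|\det L|}\,|\det(A_{23})|^{-1}$, $\tilde g(t)=g(A_{24}^TA_{23}^{-T}t)$, and $\Psi(x,\xi)=\big((A_{33}-A_{34}A_{24}^{-1}A_{23})^Tx,\ A_{23}^{-1}\xi\big)$ the block-diagonal linear isomorphism of $\rdd$ appearing in (\ref{espWAWD}); its invertibility is precisely the right-regularity of $L$ (cf. the proof of Lemma \ref{lemmaG}, via (\ref{Lm1})). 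Next I would normalize, putting $h:=\tilde g/\norm{\tilde g}_2$ so that $\norm{h}_2=1$ and $V_{\tilde g}f=\norm{\tilde g}_2\,V_hf$; squaring, integrating over $U$, and changing variables $(u,v)=\Psi(x,\xi)$ gives
\[
\int_U|W_\cA(f,g)(x,\xi)|^2\,dx\,d\xi=\kappa^2\norm{\tilde g}_2^2\,|\det\Psi|^{-1}\int_{\Psi(U)}|V_hf(u,v)|^2\,du\,dv.
\]
Evaluating this at $U=\rdd$ and using Moyal's identity (the Proposition containing (\ref{moyal})) on the left, $\norm{W_\cA(f,g)}_{L^2}^2=\norm{f}_2^2\norm{g}_2^2=1$, together with the STFT Moyal identity on the right, $\norm{V_hf}_{L^2}^2=1$, pins down the prefactor, $\kappa^2\norm{\tilde g}_2^2|\det\Psi|^{-1}=1$, and hence $\int_U|W_\cA(f,g)|^2=\int_{\Psi(U)}|V_hf|^2$. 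By hypothesis the right-hand side is $\geq1-\varepsilon$, so the STFT weak uncertainty principle recalled above gives $|\Psi(U)|\geq1-\varepsilon$; since $\Psi$ is linear, $|\Psi(U)|=|\det\Psi|\,|U|$, so it remains to check that $|\det\Psi|=1$, which (like the value of the prefactor above) belongs to the constant bookkeeping discussed next, and then $|U|\geq1-\varepsilon$ follows.

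Essentially every input is already available — formula (\ref{espWAWD}) from Lemma \ref{lemmaG}, Moyal's identity for $W_\cA$, the textbook pointwise bound $|V_hf|\leq\norm{f}_2\norm{h}_2$, and the invertibility of $\Psi$ from right-regularity — so the argument itself is short. The single genuinely delicate point, and the only place where a sign or a determinant slip could change the constant, is the scalar bookkeeping: one must verify both that $\kappa^2\norm{\tilde g}_2^2|\det\Psi|^{-1}=1$ and that $|\det\Psi|=1$, i.e. that the dilation occurring in the argument of the STFT in (\ref{espWAWD}) is volume-preserving. These are governed entirely by the symplectic relations (\ref{carSyymp}) for $\cA$ and the block structure of $\cA_{FT2}\cD_L$ in Remark \ref{remprop43}, in particular the block-determinant identity $\det L=\det(A_{24})\det(A_{33}-A_{34}A_{24}^{-1}A_{23})$; I expect the prefactor to cancel cleanly and the normalization of $\det\Psi$ to be the crux, after which the rest is routine.
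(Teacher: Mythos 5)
Your route is the same as the paper's: use Lemma \ref{lemmaG} to write $|W_\cA(f,g)|$ as a constant multiple of $|V_{\tilde g}f\circ\Psi|$ with $\Psi(x,\xi)=(c(x),d(\xi))$, normalize the window, and pull the elementary STFT weak uncertainty principle back through the linear change of variables. Everything you actually carry out is correct; in particular the normalization $\kappa^2\norm{\tilde g}_2^2|\det\Psi|^{-1}=1$ does hold (either by your Moyal argument or from $\det(L)=\det(A_{24})\det(A_{33}-A_{34}A_{24}^{-1}A_{23})$). The genuine gap is the one step you postponed: $|\det\Psi|=1$ is not implied by symplecticity or right-regularity and is false in general. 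Here $\det\Psi=\det(A_{33}-A_{34}A_{24}^{-1}A_{23})/\det(A_{23})$, which equals $1$ for the STFT matrix $A_{ST}$, but for the $\tau$-Wigner matrices $A_\tau$ one computes $|\det\Psi|=(\tau(1-\tau))^{-d}$, i.e. $4^{d}$ for the classical Wigner distribution, which satisfies all the hypotheses (it is of the classic type with right-regular $L$). So your argument, completed, yields $|U|\geq(1-\varepsilon)\,|\det(A_{23})|/|\det(A_{33}-A_{34}A_{24}^{-1}A_{23})|$ and cannot be pushed to the constant $1-\varepsilon$ claimed in the statement.

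It is worth saying that this is not sloppiness on your part but a point where your bookkeeping is sharper than the paper's. In the paper's proof the substitution $(y,\eta)=(c(x),d(\xi))$ is performed while the domain of integration is left equal to $U$, which tacitly identifies $|U|$ with $|\Psi(U)|=|\det\Psi|\,|U|$; that is exactly where the factor you could not remove disappears, and the step is legitimate only when $|\det\Psi|=1$. Two internal checks confirm your constant rather than the stated one: the subsequent refinement obtained from Theorem \ref{Lieb2} gives, in the limit $p\to\infty$, precisely the bound $|U|\gtrsim(1-\varepsilon)\,|\det(A_{23})|/|\det(A_{33}-A_{34}A_{24}^{-1}A_{23})|$; and for the classical Wigner distribution with $f=g$ a normalized Gaussian and $U$ a small ball centred at the origin (in $d=1$, a ball of area $1/4$ already captures $1-e^{-1}$ of the energy) the bound $|U|\geq1-\varepsilon$ fails, while the determinant-corrected bound holds. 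So the correct completion of your proof is to keep your constant, or to add the hypothesis $|\det(A_{33}-A_{34}A_{24}^{-1}A_{23})|=|\det(A_{23})|$ (satisfied by the STFT case), rather than to verify $|\det\Psi|=1$ in general.
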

	\begin{proof}
		Let $f,g\in L^2(\rd)$, $\varepsilon$ and $U$ as in the statement. Recall that if $L$ is right regular, then $\det(L)=\det(A_{24})\det(A_{33}-A_{34}A_{24}^{-1}A_{23})$. In particular, the matrix associated to the linear transformation $(x,\xi)\in\rdd\mapsto(c(x),d(\xi))$ defined as in Lemma \ref{lemmaG} is invertible, its determinant being $\frac{\det(A_{33}-A_{34}A_{24}^{-1}A_{23})}{\det(A_{23})}$. Then, after a change of variable, 
		\begin{align*}
			1-\varepsilon&\leq \int_U|W_\cA(f,g)(x,\xi)|^2dxd\xi=\int_U\frac{|\det(L)|}{|\det(A_{23})|^{2}}|V_{\tilde g}f(c(x),d(\xi))|^2dxd\xi\\
			&=\left|\frac{\det(A_{24})}{\det(A_{23})}\right|\int_U|V_{\tilde g}f(y,\eta)|^2dyd\eta\leq\left|\frac{\det(A_{24})}{\det(A_{23})}\right| \norm{V_{\tilde g}f}_{\infty}^2|U|.
		\end{align*}
		Since $\norm{\tilde g}_2=\left|\frac{\det(A_{23})}{\det(A_{24})}\right|^{1/2}\norm{g}_2$, $|V_{\tilde g}f(y,\eta)|\leq \norm{\tilde g}_2\norm{f}_2=\left|\frac{\det(A_{23})}{\det(A_{24})}\right|^{1/2}$ and we get the assertion.
	\end{proof}
	
	
	\begin{theorem}\label{Lieb2}
		If $f,g\in L^2(\rd)$ and $2\leq p<\infty$, then
		\[
			\norm{W_\cA(f,g)}_p\leq \frac{|\det(A_{33}-A_{34}A_{24}^{-1}A_{23})|^{\frac{1}{2}-\frac{1}{p}}}{|\det(A_{23})|^{1/2}}\left(\frac{2}{p}\right)^{d/p}\norm{f}_2\norm{g}_2.
		\]
	\end{theorem}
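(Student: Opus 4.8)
The plan is to reduce the bound to Lieb's inequality for the short-time Fourier transform, $\norm{V_\gamma h}_p\le(2/p)^{d/p}\norm{h}_2\norm{\gamma}_2$ for $2\le p<\infty$ (which holds with this sharp constant in the normalization \eqref{STFT}; see \cite{GrochenigBook}), and to carry the constant through the linear change of variables encoded in \eqref{espWAWD}, exactly as was done for the threshold case in the proof of Theorem \ref{thmLieb1}.

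First I take moduli in \eqref{espWAWD}: since $\Phi_C(x,\xi)$ and $e^{2\pi iA_{23}^{-1}\xi\cdot A_{33}^{T}x}$ are unimodular,
\[
	|W_\cA(f,g)(x,\xi)|=\sqrt{|\det L|}\,|\det A_{23}|^{-1}\,|V_{\tilde g}f(c(x),d(\xi))|,
\]
with $\tilde g$, $c$ and $d$ as in Lemma \ref{lemmaG}. Raising to the power $p$, integrating over $\rdd$ and performing the linear substitution $(y,\eta)=(c(x),d(\xi))=\big((A_{33}-A_{34}A_{24}^{-1}A_{23})^{T}x,\,A_{23}^{-1}\xi\big)$ — which is legitimate because right-regularity of $L$ makes $A_{23}$, $A_{24}$, and hence also $A_{33}-A_{34}A_{24}^{-1}A_{23}$, invertible, the Jacobian being $|\det(A_{33}-A_{34}A_{24}^{-1}A_{23})|^{-1}|\det A_{23}|$ — I obtain
\[
	\norm{W_\cA(f,g)}_p^p=\frac{|\det L|^{p/2}}{|\det A_{23}|^{p-1}\,|\det(A_{33}-A_{34}A_{24}^{-1}A_{23})|}\,\norm{V_{\tilde g}f}_p^p.
\]

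Now I apply Lieb's inequality to $\norm{V_{\tilde g}f}_p$ and insert the scaling identity $\norm{\tilde g}_2=|\det A_{23}/\det A_{24}|^{1/2}\norm{g}_2$ already used in the proof of Theorem \ref{thmLieb1}. It remains only to simplify the powers of $\det L$, $\det A_{23}$ and $\det A_{24}$: using $\det L=\det(A_{24})\det(A_{33}-A_{34}A_{24}^{-1}A_{23})$ the $\det A_{24}$ contributions cancel, and extracting the $p$-th root yields
\[
	\norm{W_\cA(f,g)}_p\le\Big(\frac{|\det(A_{33}-A_{34}A_{24}^{-1}A_{23})|}{|\det A_{23}|}\Big)^{\frac12-\frac1p}\Big(\frac{2}{p}\Big)^{d/p}\norm{f}_2\norm{g}_2,
\]
which is the claimed estimate; as a consistency check, the constant equals $1$ at $p=2$, in agreement with Moyal's identity \eqref{moyal}.

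I do not expect a genuine obstacle here: the argument is a change of variables together with one quoted inequality. The two points that actually need care are to invoke Lieb's sharp constant $(2/p)^{d/p}$ in precisely the normalization \eqref{STFT} (a dilation of the frequency variable would alter its value) and to keep the bookkeeping of the determinant exponents straight when passing from $\norm{\cdot}_p^p$ to $\norm{\cdot}_p$.
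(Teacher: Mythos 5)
Your method is exactly the paper's: take moduli in (\ref{espWAWD}), change variables using the right-regularity of $L$, apply Lieb's inequality for the STFT from \cite[Theorem 3.3.2]{GrochenigBook}, and rescale the window via $\norm{\tilde g}_2=\left|\det(A_{23})/\det(A_{24})\right|^{1/2}\norm{g}_2$; there is no difference in route. The point worth flagging is that the constant you arrive at, $\left(|\det(A_{33}-A_{34}A_{24}^{-1}A_{23})|/|\det(A_{23})|\right)^{\frac12-\frac1p}\left(\frac 2p\right)^{d/p}$, is not the constant printed in the statement, which carries $|\det(A_{23})|^{-1/2}$ instead of $|\det(A_{23})|^{-(\frac12-\frac1p)}$. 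Your bookkeeping appears to be the correct one: the substitution $(y,\eta)=(c(x),d(\xi))$ produces $|\det(A_{23})|^{-(p-1)}$, in agreement with the factor $\left|\det(A_{24})/\det(A_{23})\right|$ obtained at $p=2$ in the proof of Theorem \ref{thmLieb1}, whereas the intermediate display in the paper's proof has $|\det(A_{23})|^{-p}$; moreover your constant equals $1$ at $p=2$, as Moyal's identity (\ref{moyal}) forces (and as the paper itself asserts when comparing Theorem \ref{Lieb2} with Theorem \ref{thmF2}), while the printed constant does so only if $|\det(A_{23})|=1$. So your argument is sound and complete, but be aware that it establishes the corrected form of the estimate rather than the inequality exactly as printed.
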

	\begin{proof}
		The same steps of Theorem \ref{thmLieb1} and \cite[Theorem 3.3.2]{GrochenigBook} lead to the following computation:
		\begin{align*}
			\norm{W_\cA(f,g)}_p^p&=\frac{|\det(A_{24})|^{p/2}|\det(A_{33}-A_{34}A_{24}^{-1}A_{23})|^{\frac{p}{2}-1}}{|\det(A_{23})|^p}\norm{V_{\tilde g}f}_{p}^p\\
			&\leq\frac{|\det(A_{24})|^{p/2}|\det(A_{33}-A_{34}A_{24}^{-1}A_{23})|^{\frac{p}{2}-1}}{|\det(A_{23})|^p}\left(\frac{2}{p}\right)^{d}\norm{f}_2^p\norm{\tilde g}_2^p.
		\end{align*}
		Using again $\norm{\tilde g}_2=\left|\frac{\det(A_{23})}{\det(A_{24})}\right|^{1/2}	\norm{g}_2$,
		\[
			\norm{W_\cA(f,g)}_p^p\leq\frac{|\det(A_{33}-A_{34}A_{24}^{-1}A_{23})|^{\frac{p}{2}-1}}{|\det(A_{23})|^{p/2}}\left(\frac{2}{p}\right)^{d}\norm{f}_2^p\norm{ g}_2^p,
		\]
		and we are done.
	\end{proof}
	
	Analogously, the opposite inequality for $1\leq p\leq2$ can be proved.
		\begin{theorem}\label{thmF2}
		If $f,g\in L^2(\rd)$ and $1\leq p\leq 2$, then
		\[
			\norm{W_\cA(f,g)}_p\geq \frac{|\det(A_{33}-A_{34}A_{24}^{-1}A_{23})|^{\frac{1}{2}-\frac{1}{p}}}{|\det(A_{23})|^{1/2}}\left(\frac{2}{p}\right)^{d/p}\norm{f}_2\norm{g}_2.
		\]
	\end{theorem}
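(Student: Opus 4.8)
The plan is to run the proof of Theorem \ref{Lieb2} in reverse, using that a Wigner distribution of the classic type is, up to unimodular factors and an invertible change of variables, an STFT. Concretely, I would start from the representation (\ref{espWAWD}) of Lemma \ref{lemmaG}: since $\Phi_C$ and the chirp $e^{2\pi iA_{23}^{-1}\xi\cdot A_{33}^Tx}$ have modulus one,
\[
|W_\cA(f,g)(x,\xi)|=\frac{\sqrt{|\det(L)|}}{|\det(A_{23})|}\,|V_{\tilde g}f(c(x),d(\xi))|,
\]
with $\tilde g$, $c$, $d$ as in Lemma \ref{lemmaG}. Raising this to the power $p$, integrating over $\rdd$, and changing variables $(y,\eta)=(c(x),d(\xi))$ — whose linear part has determinant $\det(A_{33}-A_{34}A_{24}^{-1}A_{23})/\det(A_{23})$ because $L$ is right-regular — converts $\norm{W_\cA(f,g)}_p^p$ into an explicit product of powers of $|\det(A_{23})|$, $|\det(A_{24})|$ and $|\det(A_{33}-A_{34}A_{24}^{-1}A_{23})|$ times $\norm{V_{\tilde g}f}_p^p$, i.e. exactly the identity that opens the proof of Theorem \ref{Lieb2}.

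The only change is then to replace Lieb's inequality for $p\geq 2$ by its sharp complement for $1\leq p\leq 2$, namely $\norm{V_{\tilde g}f}_p\geq (2/p)^{d/p}\norm{f}_2\norm{\tilde g}_2$, cf. \cite[Theorem 3.3.2]{GrochenigBook}; this reverses every inequality in the chain used in the case $p\geq 2$. Substituting $\norm{\tilde g}_2=|\det(A_{23})/\det(A_{24})|^{1/2}\norm{g}_2$ and $\det(L)=\det(A_{24})\det(A_{33}-A_{34}A_{24}^{-1}A_{23})$ (recalled in the proof of Theorem \ref{thmLieb1}), the factors containing $\det(A_{24})$ cancel, leaving
\[
\norm{W_\cA(f,g)}_p^p\geq |\det(A_{33}-A_{34}A_{24}^{-1}A_{23})|^{\frac{p}{2}-1}|\det(A_{23})|^{-\frac{p}{2}}\Big(\tfrac{2}{p}\Big)^{d}\norm{f}_2^p\norm{g}_2^p,
\]
and taking the $p$-th root yields the stated estimate.

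Since the algebraic ingredients (the change of variables, the value of $\det(L)$ for right-regular $L$, and $\norm{\tilde g}_2$) are already in place in Lemma \ref{lemmaG}, Remark \ref{remprop43}, and the proof of Theorem \ref{thmLieb1}, there is no genuine obstacle: the whole content lies on the STFT side, in the classical reverse Lieb inequality. The one thing to keep straight is the direction of that inequality — correspondingly, that $2/p\geq 1$ when $p\leq 2$, so the exponent $\tfrac12-\tfrac1p$ is nonpositive — but this is bookkeeping rather than a difficulty.
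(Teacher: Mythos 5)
Your proposal is correct and is exactly the argument the paper intends: the paper offers no separate proof of Theorem \ref{thmF2}, stating only that it follows ``analogously'' to Theorem \ref{Lieb2}, and you carry out precisely that analogue --- the modulus identity from (\ref{espWAWD}), the change of variables $(y,\eta)=(c(x),d(\xi))$, and the substitution of $\norm{\tilde g}_2$, with Lieb's upper bound replaced by its reverse for $1\leq p\leq 2$ (note that \cite[Theorem 3.3.2]{GrochenigBook} states only the case $p\geq 2$; the lower bound is in Lieb's paper \cite{Lieb}). The one caveat, inherited from the paper's own intermediate identity in the proof of Theorem \ref{Lieb2} rather than introduced by you, is the count of powers of $|\det(A_{23})|$: the change of variables actually produces $|\det(A_{23})|^{p-1}$ (not $|\det(A_{23})|^{p}$) in the denominator, which after cancellation yields the exponent $\frac{1}{p}-\frac{1}{2}$ on $|\det(A_{23})|$ instead of $-\frac{1}{2}$ --- consistent with Moyal's identity at $p=2$ --- so your method is the right one even though the constant you (and the paper) state carries this slip.
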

	
	Observe that Theorem \ref{Lieb2} and Theorem \ref{thmF2} for $p=2$ give (\ref{moyal}). As in the case of STFT, Theorem \ref{Lieb2} can be used to improve Theorem \ref{thmLieb1}.
	
	\begin{theorem}
		Under the same assumptions of Theorem \ref{thmLieb1}, for all $p>2$
		\[
			|U|\geq(1-\varepsilon)^{\frac{p}{p-2}}\frac{|\det(A_{23})|^{\frac{p}{p-2}}}{|\det(A_{33}-A_{34}A_{24}^{-1}A_{23})|}\left(\frac{p}{2}\right)^{\frac{2d}{p-2}}.
		\]
	\end{theorem}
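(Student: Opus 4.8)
The plan is to follow the classical scheme by which a quantitative support bound is extracted from an $L^p$-estimate, exactly as one sharpens the weak uncertainty principle for the STFT via Lieb's inequality (cf. \cite[Section 3.3]{GrochenigBook}). The only two ingredients are Hölder's inequality applied on the set $U$ and the $L^p$-bound already established in Theorem \ref{Lieb2}; no new computation with the operators $\mu(V_C)$, $\cF_2$, $\mu(\cD_L)$ is needed, since all of that has been absorbed into Theorem \ref{Lieb2}.

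First I would apply Hölder's inequality with conjugate exponents $p/2$ and $p/(p-2)$ to $\int_U|W_\cA(f,g)(x,\xi)|^2\,dx\,d\xi$, writing $|W_\cA(f,g)|^2=|W_\cA(f,g)|^2\cdot\mathbf{1}_U$ and bounding it by $\||W_\cA(f,g)|^2\|_{p/2}\,\|\mathbf 1_U\|_{p/(p-2)}=\|W_\cA(f,g)\|_p^2\,|U|^{(p-2)/p}$. Combined with the hypothesis $\int_U|W_\cA(f,g)|^2\geq1-\varepsilon$, this gives
\[
1-\varepsilon\leq\|W_\cA(f,g)\|_p^2\,|U|^{\frac{p-2}{p}}.
\]
Next I would insert the bound of Theorem \ref{Lieb2}; since $\norm{f}_2=\norm{g}_2=1$, its right-hand side squared equals $|\det(A_{33}-A_{34}A_{24}^{-1}A_{23})|^{1-2/p}|\det(A_{23})|^{-1}(2/p)^{2d/p}$, whence
\[
1-\varepsilon\leq\frac{|\det(A_{33}-A_{34}A_{24}^{-1}A_{23})|^{1-\frac2p}}{|\det(A_{23})|}\Big(\tfrac2p\Big)^{\frac{2d}{p}}|U|^{\frac{p-2}{p}}.
\]

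Finally I would solve for $|U|^{(p-2)/p}$ and raise both sides to the power $p/(p-2)>0$ (valid precisely because $p>2$). The exponents then collapse: $\big(1-\tfrac2p\big)\cdot\tfrac{p}{p-2}=1$ and $\tfrac{2d}{p}\cdot\tfrac{p}{p-2}=\tfrac{2d}{p-2}$, and the factor $1/\big(\tfrac2p\big)^{2d/(p-2)}=\big(\tfrac p2\big)^{2d/(p-2)}$ appears, producing exactly the stated inequality. There is no genuine obstacle: the result is a formal consequence of Theorems \ref{thmLieb1} and \ref{Lieb2}, and the only point requiring care is the bookkeeping of the Hölder exponents and of the powers of the two determinants when the final inequality is inverted. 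One may also remark that the exponent $p/(p-2)\to\infty$ as $p\to2^+$ makes the estimate degenerate in that limit, consistently with the fact that at $p=2$ one recovers nothing more than $|U|\geq1-\varepsilon$ from Theorem \ref{thmLieb1}.
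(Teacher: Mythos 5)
Your proposal is correct and follows exactly the paper's argument: Hölder's inequality with exponents $p/2$ and $p/(p-2)$ applied to $\int_U|W_\cA(f,g)|^2$, insertion of the $L^p$-bound of Theorem \ref{Lieb2} with $\norm{f}_2=\norm{g}_2=1$, and then raising to the power $p/(p-2)$ to solve for $|U|$, with the same exponent bookkeeping (the paper likewise models this on Gr\"ochenig's Theorem 3.3.3). Nothing further is needed.
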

	\begin{proof}
		We follow the same steps as in \cite[Theorem 3.3.3]{GrochenigBook}. Using H\"older's inequality with exponent $q=p/2$, we have:
		\begin{align*}
			1-\varepsilon&\leq \int_U|W_\cA(f,g)(x,\xi)|^2dxd\xi\\
			&\leq \left(\int_{\rdd}|W_\cA(f,g)(x,\xi)|^pdxd\xi\right)^{2/p}\left(\int_{\rdd}\chi_U(x,\xi)dxd\xi\right)^{\frac{p-2}{p}}\\
			&\leq \frac{|\det(A_{33}-A_{34}A_{24}^{-1}A_{23})|^{1-\frac{2}{p}}}{|\det(A_{23})|}\left(\frac{2}{p}\right)^{2d/p}|U|^{\frac{p-2}{p}},
		\end{align*}
		that is the assertion.
	\end{proof}
	
	We conclude this section with a Donoho-Stark-like uncertainty principle that was observed in \cite[Theorem 1.4.3]{Bayer}. 
	\begin{theorem}
		Let $\cA$ be a Wigner-decomposable matrix with $L$ right-regular. Let $f,g\in L^2(\rd)$. If the (Lebesgue) measure of $\supp(W_\cA(f,g))$ is finite, then either $f=0$ or $g=0$.
	\end{theorem}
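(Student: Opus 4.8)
The plan is to reduce the assertion to the analogous rigidity property of the short-time Fourier transform by means of the factorisation (\ref{espWAWD}) of Lemma \ref{lemmaG}, and then to exploit the analyticity hidden in the STFT. Since $L$ is right-regular, Lemma \ref{lemmaG} gives, for all $f,g\in L^2(\rd)$,
\[
	W_\cA(f,g)(x,\xi)=\sqrt{|\det(L)|}\,|\det(A_{23})|^{-1}\,\Phi_C(x,\xi)\,e^{2\pi iA_{23}^{-1}\xi\cdot A_{33}^Tx}\,V_{\tilde g}f(c(x),d(\xi)),
\]
with $\tilde g(t)=g(A_{24}^TA_{23}^{-T}t)$, $c(x)=(A_{33}-A_{34}A_{24}^{-1}A_{23})^Tx$ and $d(\xi)=A_{23}^{-1}\xi$. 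The chirp $\Phi_C$ and the factor $e^{2\pi iA_{23}^{-1}\xi\cdot A_{33}^Tx}$ never vanish, and $(x,\xi)\mapsto(c(x),d(\xi))$ is a linear automorphism of $\rdd$ whose determinant $\det(A_{33}-A_{34}A_{24}^{-1}A_{23})/\det(A_{23})$ is nonzero, as recalled in the proof of Theorem \ref{thmLieb1}. Both $W_\cA(f,g)$ and $V_{\tilde g}f$ are continuous, so passing to closures of the non-vanishing sets yields
\[
	|\supp(W_\cA(f,g))|=\frac{|\det(A_{23})|}{|\det(A_{33}-A_{34}A_{24}^{-1}A_{23})|}\,|\supp(V_{\tilde g}f)|.
\]
Since $t\mapsto A_{24}^TA_{23}^{-T}t$ is invertible, $\tilde g=0$ if and only if $g=0$. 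Hence it suffices to prove: if $h,f\in L^2(\rd)$ and $\supp(V_hf)$ has finite Lebesgue measure, then $f=0$ or $h=0$.

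I would first settle the case of the Gaussian window $\varphi(t)=2^{d/4}e^{-\pi|t|^2}$. Up to a universal unimodular constant,
\[
	V_\varphi f(x,\xi)=e^{i\pi x\cdot\xi}\,e^{-\frac{\pi}{2}(|x|^2+|\xi|^2)}\,Bf(x-i\xi),
\]
where $Bf$ is the Bargmann transform of $f$, entire on $\bC^d$, with $Bf\equiv0$ if and only if $f=0$. Both the Gaussian and the chirp above are everywhere nonzero and $(x,\xi)\mapsto x-i\xi$ is a real-linear isomorphism $\rdd\to\bC^d$, so $\supp(V_\varphi f)$ has finite measure if and only if the zero set of $Bf$ has full measure in $\bC^d$. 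But the zero set of a nonzero entire function on $\bC^d$ is a proper analytic subvariety, hence has Lebesgue measure zero; consequently $f\neq0$ forces $|\supp(V_\varphi f)|=\infty$, which is the claim for $h=\varphi$.

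The passage to an arbitrary window $h\in L^2(\rd)\setminus\{0\}$ is the only genuinely nontrivial point: the statement for a general window is equivalent to Janssen's theorem on the supports of (cross-)Wigner distributions, and in the present metaplectic formulation it is proved in \cite[Theorem 1.4.3]{Bayer}. The natural route is to normalise $\norm{h}_2=1$, use that $V_h$ is an isometry together with the covariance of the STFT to express $V_\varphi f$ as a twisted correlation of $V_hf$ with $V_h\varphi$ (so that the hypothesis $\supp(V_hf)\subseteq U$, $|U|<\infty$, confines this correlation integral to $U$), and combine this with a Gabor expansion of $f$ in Gaussian atoms, for which $f\neq 0$ produces, via the first step, a function $V_\varphi f$ that is nonzero almost everywhere; one then extracts a contradiction with $|U|<\infty$. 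The delicate part is exactly this last step: transporting the analytic rigidity available for the Gaussian window through the reproducing structure of the STFT associated with a merely square-integrable window $h$.
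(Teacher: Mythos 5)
Your reduction step is fine: using (\ref{espWAWD}) from Lemma \ref{lemmaG}, the nonvanishing chirp factors and the invertible change of variables $(x,\xi)\mapsto(c(x),d(\xi))$ do show that $\supp(W_\cA(f,g))$ has finite measure if and only if $\supp(V_{\tilde g}f)$ does, with $\tilde g=0$ iff $g=0$; this is exactly the normalization the paper itself uses in Section \ref{sec:UP} for the Lieb-type results, and the paper offers no further proof of this theorem beyond quoting \cite[Theorem 1.4.3]{Bayer}. Your Gaussian-window step is also correct: with $\varphi$ the Gaussian, $V_\varphi f$ equals a nonvanishing factor times the Bargmann transform evaluated along a real-linear isomorphism of $\rdd$ with $\bC^d$, and the zero set of a nonzero entire function on $\bC^d$ has Lebesgue measure zero, so $f\neq0$ forces $|\supp(V_\varphi f)|=\infty$.

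The gap is the general-window case, which is the actual content of the theorem: after your reduction you need ``$f,h\in L^2(\rd)$, $|\supp(V_hf)|<\infty$ $\Rightarrow$ $f=0$ or $h=0$'' for an \emph{arbitrary} $L^2$ window $h=\tilde g$, and this you do not prove. Citing \cite[Theorem 1.4.3]{Bayer} at this point is circular, since that is precisely the statement under discussion (the cross-STFT and the matrix Wigner distribution differ only by the chirp and linear change of variables you have already factored out). The sketched route you offer instead (isometry of $V_h$, covariance, a twisted correlation of $V_hf$ with $V_h\varphi$, and a Gabor expansion of $f$ in Gaussian atoms) does not close as described: the reproducing relation only expresses $V_\varphi f$ as an integral of $V_hf$ over the finite-measure set $U$ against shifts of $V_h\varphi$, and nothing in that formula prevents a function supported on a set of finite measure from producing a nowhere-vanishing output, so no contradiction with $|U|<\infty$ is obtained this way. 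Passing from the Gaussian window to a general $L^2$ window is exactly where Janssen's argument (or Wilczok's) requires a genuinely different idea --- a periodization/Zak-transform or eigenvalue-counting argument rather than analytic continuation --- and without carrying out (or at least correctly citing, e.g. Janssen's theorem on supports of cross-Wigner distributions as an external result independent of Bayer's thesis) that step, the proof is incomplete.
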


\section{Metaplectic Quantizations for Pseudodifferential Operators}\label{sec:MPO}
Once a time-frequency representation is defined, one can introduce a new quantization law for pseudodifferential operators. Recall that a quantization law is a mapping $\sigma\in\cS'(\rdd)\mapsto T_\sigma$, where $T_\sigma:\cS(\rd)\to\cS'(\rd)$ depends on the choice of the time-frequency representation. In the following definition, which was firstly given in \cite{CR2022}, the classical Weyl quantization is generalized up to include metaplectic Wigner distributions.

\begin{definition}
	Let $\cA\in Sp(2d,\bR)$. The \textbf{metaplectic Wigner distribution with symbol} $a\in\cS'(\rdd)$ is the operator $Op_\cA(a):\cS(\rd)\to\cS'(\rd)$ defined for all $f\in\cS(\rd)$ by duality as
	\[
		\langle Op_\cA(a)f,g\rangle:=\langle a,W_\cA(g,f)\rangle,\qquad g\in \cS(\rd).
	\]
\end{definition}
If $\cA=A_{1/2}$, we recover the well-known Weyl quantization:
\[
Op_w(a)f(x)=\int_{\rdd}e^{2\pi i(x-y)\xi}a\Big(\frac{x+y}{2},\xi\Big)f(y)dyd\xi.
\]

In this section, we report on the main properties of metaplectic pseudodifferential operators. First, we show a change of quantization formula, which leads to a kernel formula, useful to study $L^p$ boundedness of metaplectic pseudodifferential operators.

\begin{proposition}\label{propCambioQuant}
	Let $\cA,\cB\in Sp(2d,\bR)$ and $a,b\in\cS'(\rdd)$. Then,
	\begin{equation}\label{condKer}
		Op_\cA(a)=Op_\cB(b) \qquad \Longleftrightarrow \qquad b=\mu(\cB)\mu(\cA)^{-1}(a).
	\end{equation}
	As a consequence, for all $f\in \cS(\rd)$
	\[
		\langle Op_\cA(a)f,g\rangle = \langle k_\cA(a),g\otimes\bar f\rangle \qquad g\in\cS(\rd)
	\]
	with $k_\cA(a)=\mu(\cA)^{-1}(a)$.
\end{proposition}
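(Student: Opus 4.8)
The plan is to start from the defining duality relation $\langle Op_\cA(a)f,g\rangle=\langle a,W_\cA(g,f)\rangle$ and to unwind the definition $W_\cA(g,f)=\mu(\cA)(g\otimes\bar f)$. The key observation is that $\mu(\cA)$ is unitary on $L^2(\rdd)$ and, by Proposition \ref{Folland427}, an isomorphism of $\cS(\rdd)$ and of $\cS'(\rdd)$, so one may legitimately move it to the other side of the pairing. Concretely, I would write
\[
\langle Op_\cA(a)f,g\rangle=\langle a,\mu(\cA)(g\otimes\bar f)\rangle=\langle \mu(\cA)^{-1}(a),g\otimes\bar f\rangle=\langle \mu(\cA)^{*}(a),g\otimes\bar f\rangle,
\]
where the middle equality is the statement that $\mu(\cA)^{*}=\mu(\cA)^{-1}$ extended by duality to $\cS'$. (One must be a little careful about whether this is $\mu(\cA)^{-1}$ or its adjoint, and about the implicit unitary phase factor; since $|c|=1$ the phase is harmless for the operator identity, and on $L^2$ unitarity gives $\mu(\cA)^{*}=\mu(\cA)^{-1}$, which then propagates to $\cS'$ by density and continuity.) This already yields the kernel formula with $k_\cA(a)=\mu(\cA)^{-1}(a)$, once we identify the pairing $\langle k_\cA(a),g\otimes\bar f\rangle$ as the action of a Schwartz kernel on the tensor product $g\otimes\bar f$ — this is exactly the Schwartz kernel theorem, and $k_\cA(a)\in\cS'(\rdd)$ because $\mu(\cA)^{-1}$ preserves $\cS'$.

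For the change-of-quantization equivalence, the direction $(\Leftarrow)$ is immediate: if $b=\mu(\cB)\mu(\cA)^{-1}(a)$ then for all $f,g\in\cS(\rd)$,
\[
\langle Op_\cB(b)f,g\rangle=\langle b,\mu(\cB)(g\otimes\bar f)\rangle=\langle \mu(\cB)\mu(\cA)^{-1}(a),\mu(\cB)(g\otimes\bar f)\rangle=\langle \mu(\cA)^{-1}(a),g\otimes\bar f\rangle=\langle Op_\cA(a)f,g\rangle,
\]
using unitarity of $\mu(\cB)$ (on $L^2$, extended to the $\cS'$–$\cS$ pairing). For $(\Rightarrow)$, suppose $Op_\cA(a)=Op_\cB(b)$ as operators $\cS(\rd)\to\cS'(\rd)$; then $\langle a,W_\cA(g,f)\rangle=\langle b,W_\cB(g,f)\rangle$ for all $f,g\in\cS(\rd)$, equivalently $\langle \mu(\cA)^{-1}(a)-\mu(\cB)^{-1}(b),g\otimes\bar f\rangle=0$ for all such $f,g$. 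Since $\{g\otimes\bar f: f,g\in\cS(\rd)\}$ has dense linear span $L(\cS)$ in $\cS(\rdd)$ (stated in the Preliminaries), the distribution $\mu(\cA)^{-1}(a)-\mu(\cB)^{-1}(b)$ vanishes, i.e. $\mu(\cB)^{-1}(b)=\mu(\cA)^{-1}(a)$, which rearranges to $b=\mu(\cB)\mu(\cA)^{-1}(a)$ (again up to the irrelevant unit-modulus phase, which one absorbs into the choice of metaplectic representatives).

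The main obstacle is purely bookkeeping rather than conceptual: one must be scrupulous about (i) the distinction between $\mu(\cA)^{-1}$ and $\mu(\cA)^{*}$ when the pairing is the sesquilinear one used in this paper — the antilinearity in the second slot is what makes $\bar f$ appear and is why $W_\cA(g,f)$, not $W_\cA(f,g)$, shows up in the definition — and (ii) the ubiquitous unitary phase factor from composing metaplectic operators, which the paper handles by the blanket "up to a unitary factor" convention and which does not affect the operator identity $Op_\cA(a)=Op_\cB(b)$ nor the distributional identity for kernels. Once these are pinned down, the proof is a three-line manipulation plus an appeal to the density of $L(\cS)$ in $\cS(\rdd)$ and to Proposition \ref{Folland427} for the mapping properties of $\mu(\cA)^{\pm1}$ on $\cS$ and $\cS'$.
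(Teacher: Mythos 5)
Your proof is correct and follows essentially the same route as the paper: unwind the defining duality to get $\langle Op_\cA(a)f,g\rangle=\langle \mu(\cA)^{-1}a,g\otimes\bar f\rangle$, compare with the analogous identity for $Op_\cB(b)$, and use the density of $L(\cS)$ in $\cS(\rdd)$ to conclude $\mu(\cA)^{-1}a=\mu(\cB)^{-1}b$; the kernel formula is then the special case $\cB=I$ (you obtain it directly from the first identity, which amounts to the same thing). Your extra remarks on $\mu(\cA)^{*}$ versus $\mu(\cA)^{-1}$ and the phase factor are fine — note only that no phase ambiguity actually arises here, since $\mu(\cA)^{-1}$ and $\mu(\cB)\mu(\cA)^{-1}$ are compositions of fixed chosen operators rather than invocations of $\mu$ at a product of symplectic matrices.
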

\begin{proof}
	For $f,g\in\cS(\rd)$,
	\[
		\langle Op_\cA(a)f,g \rangle=\langle a,\mu(\cA)(g\otimes\bar f)\rangle=\langle\mu(\cA)^{-1}a,g\otimes\bar f\rangle
	\]
	and on the other hand,
	\[
		\langle Op_\cB(b)f,g \rangle=\langle\mu(\cB)^{-1}b,g\otimes\bar f\rangle.
	\]
	This identity extends trivially to $L(\cS)$, which is dense in $\cS(\rdd)$. Hence,
	\[
		\langle\mu(\cA)^{-1}a,F\rangle=\langle\mu(\cB)^{-1}b,F\rangle
	\]
	for all $F\in\cS(\rdd)$. Hence, $Op_\cA(a)=Op_\cB(b)$ if and only if $\mu(\cA)^{-1}a=\mu(\cB)^{-1}b$. For the kernel formula, choose $\cB=I$ in (\ref{condKer}).
\end{proof}

The classical Wigner distribution satisfies the intertwining formula
\[
	W(Op_w(a)f,g)=Op_w(b)W(f,g) \qquad (a\in S^0_{0,0}(\rdd))
\]
and $b(x,\xi,u,v):=a(x-v/2,\xi+u/2)$. The following result shows that this relation can be extended to metaplectic Wigner distributions in a very general fashion, cf. \cite[Corollary 3.4]{CGR2022}. All the identities in Proposition \ref{corollarioallacomm} have to be intended up to a unitary phase factor.

\begin{proposition}\label{corollarioallacomm}
		Consider $\cA\in Sp(4d,\mathbb{R})$, $\mathcal{B}\in Sp(2d,\mathbb{R})$ and $a\in\mathcal{S}'(\mathbb{R}^{2d})$. Then, for all $\mathcal{B}_0\in Sp(4d,\mathbb{R})$, $f,g\in\mathcal{S}(\rd)$,
		\begin{equation}\label{commOpW}
			W_\cA(Op_{\mathcal{B}}(a)f,{g})=Op_{\mathcal{B}_0}(\mu(\mathcal{B}_0{A}_{1/2}^{-1})((\mu({A}_{1/2}\mathcal{B}^{-1})a)\otimes 1)\circ\cA^{-1})W_\cA(f,g).
		\end{equation}
		In particular, \\
		(i) if $\mathcal{B}_0=A_{1/2}$, then
		\begin{equation}\label{commOpWParticolare}
				W_\cA(Op_\mathcal{B}(a)f,g)=Op_{w}(((\mu({A}_{1/2}\mathcal{B}^{-1})a)\otimes 1)\circ\cA^{-1})W_\cA(f,g);
		\end{equation}
		\noindent
		(ii) if $\mathcal{B}_0=A_{1/2}$ and $\mathcal{B}=A_{1/2}$, then
		\begin{equation}\label{commOpWParticolare2}
			W_\cA(Op_{w}(a)f,g)=Op_{w}((a\otimes 1)\circ\cA^{-1})W_\cA(f,g).
		\end{equation}
	\end{proposition}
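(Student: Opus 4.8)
The plan is to derive the general intertwining formula \eqref{commOpW} directly from the definitions, and then obtain (i) and (ii) by specialization. First I would unwind the left-hand side using the definition of the metaplectic Wigner distribution and of $Op_{\cB}(a)$: by duality, for a test function $\Psi\in\cS(\rdd)$ one has
\[
	\langle W_\cA(Op_{\cB}(a)f,g),\Psi\rangle = \langle \mu(\cA)(Op_{\cB}(a)f\otimes\bar g),\Psi\rangle = \langle Op_{\cB}(a)f\otimes\bar g,\mu(\cA)^{-1}\Psi\rangle,
\]
and then expand $Op_{\cB}(a)f$ via the kernel formula of Proposition \ref{propCambioQuant}, writing $k_{\cB}(a)=\mu(\cB)^{-1}(a)$, or equivalently reduce everything to the Weyl case using $Op_{\cB}(a)=Op_w(\mu(A_{1/2}\cB^{-1})a)$ which is exactly \eqref{condKer} with $\cA\rightsquigarrow\cB$, $\cB\rightsquigarrow A_{1/2}$. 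This immediately shows that it suffices to prove \eqref{commOpW} in the special case $\cB=A_{1/2}$, i.e. to prove the Weyl intertwining identity $W_\cA(Op_w(c)f,g)=Op_{\cB_0}(\ldots)W_\cA(f,g)$ with the symbol as displayed, and then replace $c$ by $\mu(A_{1/2}\cB^{-1})a$ at the end.

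The core of the argument is therefore the Weyl case. Here I would use the known classical intertwining relation for the Weyl calculus, namely $W(Op_w(c)f,g)=Op_w(c^\sharp)W(f,g)$ where $c^\sharp(x,\xi,u,v)=c(x-v/2,\xi+u/2)=((c\otimes 1)\circ A_{1/2}^{-1}\cdot\text{(reshuffling)})$; more precisely one checks that the symbol $c^\sharp$ equals $(c\otimes 1)$ composed with the appropriate symplectic coordinate change, which is exactly $A_{1/2}^{-1}$ acting in the enlarged phase space (this is the content of \eqref{commOpWParticolare2} once $\cA=A_{1/2}$). Then, to pass from the classical Wigner distribution $W=W_{A_{1/2}}$ to a general $W_\cA$, I would conjugate by $\mu(\cA A_{1/2}^{-1})$: since $W_\cA(f,g)=\mu(\cA)(f\otimes\bar g)=\mu(\cA A_{1/2}^{-1})\mu(A_{1/2})(f\otimes\bar g)=\mu(\cA A_{1/2}^{-1})W(f,g)$, applying $\mu(\cA A_{1/2}^{-1})$ to both sides of the classical identity and using the conjugation rule $\mu(\cM)Op_w(b)\mu(\cM)^{-1}=Op_w(b\circ\cM^{-1})$ (valid up to a phase, and itself a consequence of \eqref{condKer}) yields $W_\cA(Op_w(c)f,g)=Op_w((c^\sharp)\circ(\cA A_{1/2}^{-1})^{-1})W_\cA(f,g)$. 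Re-expressing $c^\sharp$ as $(c\otimes 1)\circ A_{1/2}^{-1}$ and composing the coordinate changes gives the symbol $(c\otimes 1)\circ A_{1/2}^{-1}\circ A_{1/2}\cA^{-1}=(c\otimes 1)\circ\cA^{-1}$, which is precisely \eqref{commOpWParticolare2}; inserting $c=\mu(A_{1/2}\cB^{-1})a$ gives \eqref{commOpWParticolare}, and finally replacing the output quantization $Op_w=Op_{A_{1/2}}$ by a general $Op_{\cB_0}$ via \eqref{condKer} (which introduces the extra factor $\mu(\cB_0 A_{1/2}^{-1})$ in front of the symbol) produces the full formula \eqref{commOpW}.

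The step I expect to be the main obstacle is the bookkeeping of the symplectic coordinate changes in the $4d$-dimensional phase space: one must verify carefully that the classical relation $c^\sharp=(c\otimes 1)\circ A_{1/2}^{-1}$ holds with the correct block structure, that the enlargement $c\mapsto c\otimes 1$ interacts correctly with composition by $\cA^{-1}\in Sp(4d,\bR)$ (note $a\in\cS'(\rdd)$ so $a\otimes 1\in\cS'(\bR^{4d})$ and $\cA$ acts on $\bR^{4d}$), and that all the phase factors from composing metaplectic operators can indeed be absorbed into the "up to a unitary phase factor" convention. A secondary technical point is justifying the density/duality manipulations: the identities are first established for $f,g\in\cS(\rd)$ and symbols in $\cS'$, using that $L(\cS)$ is dense in $\cS(\rdd)$ (as recalled in Section \ref{sec:2}) and that both sides depend continuously on the symbol in the weak-$\ast$ topology, so it is enough to check them on tensor products and then extend. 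Once the coordinate-change identity $c^\sharp=(c\otimes 1)\circ A_{1/2}^{-1}$ is pinned down, the rest is a formal, if notation-heavy, chain of conjugations.
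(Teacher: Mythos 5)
Your proposal is correct, but it is organized differently from the paper's proof. The paper reduces $Op_{\cB}(a)$ to Weyl form exactly as you do, and then invokes the tensorization identity $(Op_w(b)f)\otimes\bar g=Op_w(b\otimes 1)(f\otimes\bar g)$ of \cite[Lemma 4.1]{CR2021}, applies $\mu(\cA)$ to both sides and uses the symplectic covariance $\mu(\cA)Op_w(\sigma)\mu(\cA)^{-1}=Op_w(\sigma\circ\cA^{-1})$ in a single step, before switching the output quantization via Proposition \ref{propCambioQuant}. You instead take the classical Wigner--Weyl intertwining relation $W(Op_w(c)f,g)=Op_w(c^\sharp)W(f,g)$ as the base case, identify $c^\sharp=(c\otimes1)\circ A_{1/2}^{-1}$, and conjugate by $\mu(\cA A_{1/2}^{-1})$. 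The two routes are equivalent in substance: your base case is precisely the paper's lemma conjugated by $\mu(A_{1/2})$, i.e.\ the special case $\cA=A_{1/2}$ of \eqref{commOpWParticolare2}. The bookkeeping you worry about does work out: since $A_{1/2}^{-1}(x,\xi,u,v)=(x-\tfrac v2,\ x+\tfrac v2,\ \xi+\tfrac u2,\ -\xi+\tfrac u2)$, with the convention that $c\otimes 1$ pairs the first position block with the first frequency block one gets $(c\otimes1)\circ A_{1/2}^{-1}(x,\xi,u,v)=c(x-\tfrac v2,\xi+\tfrac u2)=c^\sharp(x,\xi,u,v)$, and then $c^\sharp\circ(A_{1/2}\cA^{-1})=(c\otimes1)\circ\cA^{-1}$ as you compute. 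What your route buys is that it leans only on the classical intertwining formula rather than on the $4d$-dimensional tensorization lemma of \cite{CR2021}; the price is the block verification just described and the need to extend the classical relation, stated before the proposition only for $a\in S^0_{0,0}(\rdd)$, to symbols in $\cS'(\rdd)$, which, as you note, is a routine duality/density matter. The final passage from $Op_w$ to $Op_{\cB_0}$ via \eqref{condKer}, producing the factor $\mu(\cB_0A_{1/2}^{-1})$, coincides with the paper's last step.

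One small correction: the covariance rule $\mu(\cM)Op_w(b)\mu(\cM)^{-1}=Op_w(b\circ\cM^{-1})$ is not a consequence of \eqref{condKer}, which only compares different quantizations of one and the same operator and says nothing about conjugation by metaplectic operators; it is an independent classical property of the Weyl calculus (see e.g.\ \cite{FollandHAPS}), which the paper likewise invokes without proof. This misattribution does not affect the validity of your argument, but the covariance formula should be quoted as an external fact rather than derived from \eqref{condKer}.
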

	\begin{proof}
		Recall that, in general, $\mu(\cA)\mu(\cB)=\mu(\cA\cB)$ up to a unitary constant. By \cite[Lemma 4.1]{CR2021}, for all $a\in\cS'(\rdd)$ and all $f,g\in\cS(\rd)$,
		\[
			(Op_w(a)f)\otimes \bar g=Op_w(a\otimes 1)(f\otimes \bar g).
		\]
		Using $\mu(\cA)Op_w(\sigma)\mu(\cA)^{-1}=Op_w(\sigma\circ\cA^{-1})$, we get
		\begin{align*}
			W_\cA(Op_\cB(a)f,g)&=\mu(\cA)(Op_\cB(a)f\otimes\bar g)=\mu(\cA)(Op_w(\mu(A_{1/2}\cB^{-1})a)f\otimes\bar g)\\
			&=\mu(\cA)((Op_w(\mu(A_{1/2}\cB^{-1})a)\otimes 1)(f\otimes\bar g))\\
			&=Op_w((((\mu(A_{1/2}\cB^{-1}))a)\otimes 1)\circ\cA^{-1})\mu(\cA)(f\otimes\bar g)\\
			&=Op_w((((\mu(A_{1/2}\cB^{-1}))a)\otimes 1)\circ\cA^{-1})W_\cA(f,g).
		\end{align*}
		The assertion follows by Proposition \ref{propCambioQuant}.
	\end{proof}
	
	A boundedness result on $M^p_{v_s}$ spaces follows directly by Theorem \ref{thm37}.
	
	\begin{theorem}\label{thmF}
		Let $1\leq p<\infty$, $s\geq0$ $a\in M^{p'}_{v_{-s}}(\rdd)$ and $\cA\in Sp(2d,\bR)$. Then, $Op_\cA(a):\cS(\rd)\to\cS'(\rd)$ extends to a bounded operator from $M^p_{v_s}(\rd)$ to $M^{p'}_{v_{-s}}(\rd)$. In particular, if $a\in M^{p'}(\rdd)$, $Op_\cA(a)$ extends to a bounded operator from $M^p(\rd)$ to $M^{p'}(\rd)$.
	\end{theorem}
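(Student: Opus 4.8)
The plan is to reduce the statement to a duality pairing estimate and then invoke Theorem \ref{thm37}. First I would unwind the definition of $Op_\cA(a)$: for $f\in M^p_{v_s}(\rd)$ and $g\in\cS(\rd)$ (a dense subspace of $M^p_{v_s}(\rd)$ when $p<\infty$), we have
\[
	|\langle Op_\cA(a)f,g\rangle| = |\langle a,W_\cA(g,f)\rangle| \leq \norm{a}_{M^{p'}_{v_{-s}}(\rdd)}\,\norm{W_\cA(g,f)}_{M^p_{v_s}(\rdd)},
\]
where the inequality is the duality $(M^p_{v_s}(\rdd))' = M^{p'}_{v_{-s}}(\rdd)$, valid for $1\leq p<\infty$ (recall $v_{-s} = 1/v_s$, so the dual weight is exactly $v_{-s}$). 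The key structural input is then Theorem \ref{thm37}, which bounds $\norm{W_\cA(g,f)}_{M^p_{v_s}}$ by $\norm{g}_{M^p}\norm{f}_{M^p_{v_s}} + \norm{f}_{M^p}\norm{g}_{M^p_{v_s}}$; combined with the trivial inclusion $M^p_{v_s}(\rd)\hookrightarrow M^p(\rd)$ for $s\geq 0$, this gives $\norm{W_\cA(g,f)}_{M^p_{v_s}} \lesssim \norm{g}_{M^p_{v_s}}\norm{f}_{M^p_{v_s}}$.

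Putting these together yields $|\langle Op_\cA(a)f,g\rangle| \lesssim \norm{a}_{M^{p'}_{v_{-s}}}\norm{f}_{M^p_{v_s}}\norm{g}_{M^p_{v_s}}$ for all $g\in\cS(\rd)$. Since $\cS(\rd)$ is dense in $M^p_{v_s}(\rd)$ for $p<\infty$, this shows that the linear functional $g\mapsto \langle Op_\cA(a)f,g\rangle$ extends to a bounded functional on $M^p_{v_s}(\rd)$, i.e.\ it is represented by an element of $(M^p_{v_s}(\rd))' = M^{p'}_{v_{-s}}(\rd)$, with norm controlled by $\norm{a}_{M^{p'}_{v_{-s}}}\norm{f}_{M^p_{v_s}}$. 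This identifies $Op_\cA(a)f$ as an element of $M^{p'}_{v_{-s}}(\rd)$ and gives the desired operator bound $\norm{Op_\cA(a)f}_{M^{p'}_{v_{-s}}} \lesssim \norm{a}_{M^{p'}_{v_{-s}}}\norm{f}_{M^p_{v_s}}$. The unweighted statement is the special case $s=0$, where $v_0\equiv 1$.

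The main point requiring care is the density/duality argument: one must check that the functional $g\mapsto\langle Op_\cA(a)f,g\rangle$, a priori defined only by duality against Schwartz functions, genuinely extends to the full modulation space rather than just to its Schwartz closure $\cM^p_{v_s}$ — but since $p<\infty$ forces $\cM^p_{v_s} = M^p_{v_s}$ (as recalled in paragraph \ref{subsec:MS}), there is no gap. One should also double-check that the weight appearing in the dual pairing is exactly $v_{-s}$; this is immediate since $v_s\cdot v_{-s}=v_0\equiv 1$ makes $v_{-s}$ a legitimate ($v_s$-moderate, indeed $v_{|s|}$-moderate) weight and the general duality $(M^{p,q}_m)' = M^{p',q'}_{1/m}$ applies. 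No additional hypotheses on $\cA$ beyond $\cA\in Sp(2d,\bR)$ are needed, precisely because Theorem \ref{thm37} is uniform over all symplectic $\cA$.
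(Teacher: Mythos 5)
Your proposal is correct and follows essentially the same route as the paper's own proof: the duality estimate $|\langle Op_\cA(a)f,g\rangle|=|\langle a,W_\cA(g,f)\rangle|\lesssim \norm{a}_{M^{p'}_{v_{-s}}}\norm{W_\cA(g,f)}_{M^p_{v_s}}$ combined with Theorem \ref{thm37} and the inclusion $M^p_{v_s}(\rd)\hookrightarrow M^p(\rd)$ for $s\geq0$, with the unweighted case obtained by setting $s=0$. Your additional care about density of $\cS(\rd)$ in $M^p_{v_s}(\rd)$ for $p<\infty$ and the identification of the dual weight just makes explicit steps the paper leaves implicit.
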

	\begin{proof}
		By Theorem \ref{thm37} and the continuous inclusion $M^p_{v_s}(\rd)\hookrightarrow M^p(\rd)$ holding for $s\geq0$, we get $\norm{W_\cA(f,g)}_{M^p_{v_s}}\lesssim\norm{f}_{M^p_{v_s}}\norm{g}_{M^p_{v_s}}$. By duality,
		\begin{align*}
			|\langle Op_\cA(a)f,g\rangle|&=|\langle a,W_\cA(g,f)\rangle|\lesssim \norm{a}_{M^{p'}_{v_{-s}}}\norm{W_\cA(g,f)}_{M^p_{v_s}}\\
			&\lesssim\norm{a}_{M^{p'}_{v_{-s}}}\norm{f}_{M^p_{v_{s}}}\norm{g}_{M^p_{v_s}},
		\end{align*}
		so that
		\[
			\norm{Op_\cA(a)f}_{M^{p'}_{v_{-s}}}\lesssim\norm{a}_{M^{p'}_{v_{-s}}}\norm{f}_{M^p_{v_s}}.
		\]
		The continuity on the unweighted modulation spaces follows by choosing $s=0$. 
	\end{proof}
	
	As a consequence, we obtain the following version of Calderon-Vaillancourt Theorem for metaplectic pseudodifferential operators:
	
	\begin{corollary}
	Let $a\in M^{p'}(\rdd)$ with $p\in[2,+\infty)$, then $Op_\cA(a)$ extends to a bounded operator on $L^2(\rd)$.
	\end{corollary}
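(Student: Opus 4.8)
The plan is to derive this as a direct corollary of Theorem \ref{thmF} by tracking how the hypotheses specialize. First I would observe that the statement concerns the symbol class $M^{p'}(\rdd)$ with $p \in [2,+\infty)$, so that the dual exponent satisfies $p' \in (1,2]$; in particular Theorem \ref{thmF} applies with the weight $s=0$, giving that $Op_\cA(a) : \cS(\rd) \to \cS'(\rd)$ extends to a bounded operator from $M^p(\rd)$ to $M^{p'}(\rd)$.

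Next I would use the inclusion relations for modulation spaces recalled in paragraph \ref{subsec:MS}: since $p \geq 2 \geq p'$, the nesting $M^{p_1,q_1}_m(\rd) \hookrightarrow M^{p_2,q_2}_m(\rd)$ for $p_1 \leq p_2$, $q_1 \leq q_2$ yields the chain
\[
	M^2(\rd) \hookrightarrow M^p(\rd) \qquad \text{and} \qquad M^{p'}(\rd) \hookrightarrow M^2(\rd).
\]
Combining these two continuous inclusions with the boundedness $M^p(\rd) \to M^{p'}(\rd)$ of $Op_\cA(a)$ from the previous step, and recalling the identification $M^2(\rd) = L^2(\rd)$ also stated in paragraph \ref{subsec:MS}, we obtain a continuous factorization
\[
	L^2(\rd) = M^2(\rd) \hookrightarrow M^p(\rd) \xrightarrow{\ Op_\cA(a)\ } M^{p'}(\rd) \hookrightarrow M^2(\rd) = L^2(\rd),
\]
which is exactly the asserted $L^2$-boundedness. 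Since $\cS(\rd)$ is dense in $L^2(\rd)$ and the composed operator is bounded on the dense subspace $\cS(\rd)$, it extends uniquely to all of $L^2(\rd)$.

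I do not expect any serious obstacle here: the whole argument is the routine observation that $L^2$ sits between $M^{p'}$ and $M^p$ when $p' \le 2 \le p$, so a bounded map $M^p \to M^{p'}$ automatically restricts and corestricts to a bounded map on $L^2$. The only point requiring a moment's care is bookkeeping the direction of the modulation-space inclusions against the direction of the exponents, and noting that $p \in [2,\infty)$ forces $p' \in (1,2]$ so that Theorem \ref{thmF} is indeed applicable (its hypothesis being $1 \le p < \infty$, with the symbol in $M^{p'}$).
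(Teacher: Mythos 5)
Your argument is correct, and it differs from the paper's proof only in where the modulation-space inclusions are applied. The paper embeds on the \emph{symbol} side: since $p\geq 2$ forces $p'\leq 2$, one has $M^{p'}(\rdd)\hookrightarrow M^{2}(\rdd)$, and then Theorem \ref{thmF} is invoked once, at the exponent $2$ (where $2'=2$), yielding boundedness from $M^2(\rd)=L^2(\rd)$ to itself. You instead keep the symbol in $M^{p'}(\rdd)$, apply Theorem \ref{thmF} at the original exponent $p$ to get boundedness $M^p(\rd)\to M^{p'}(\rd)$, and then sandwich $L^2$ on the \emph{function} side via $M^{p'}(\rd)\hookrightarrow M^2(\rd)\hookrightarrow M^p(\rd)$. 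Both routes rest on exactly the same two ingredients (Theorem \ref{thmF} and the nesting of modulation spaces), so neither is deeper than the other; the paper's version is a one-line embedding, while yours records the slightly stronger intermediate fact that $Op_\cA(a)$ is bounded from $M^p(\rd)$ to $M^{p'}(\rd)$ at the given exponent, with the $L^2$ statement obtained by restriction and corestriction. Your bookkeeping of the inclusion directions ($M^2\hookrightarrow M^p$ for $p\geq2$, $M^{p'}\hookrightarrow M^2$ for $p'\leq2$) and of the admissibility of Theorem \ref{thmF} for $p\in[2,\infty)$ is accurate, and the final density remark is harmless though not needed, since the extension is already provided by the theorem.
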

	\begin{proof}
	If $p\geq2$ and $s\geq0$, then $M^{p'}(\rdd)\hookrightarrow M^2(\rdd)$ and it follows by Theorem \ref{thmF} that $Op_\cA(a)$ extends to a bounded operator from $M^2(\rd)=L^2(\rd)$ to itself.
	\end{proof}
	
	We conclude this Section with the boundedness properties of metaplectic pseudodifferential operators on modulation spaces in the case in which $W_\cA$ is in the form $\cF_2\mathfrak{T}_L$ for an appropriate choice of $L$. Recall that for $\cA=A_{1/2}$, the following proposition holds, cf. \cite{GrochenigBook, Toft1, Toft2}:
	\begin{proposition}\label{propWBC}
		If $0<p,q,r\leq\infty$ with $r=\min\{1,p,q\}$, $s\in\bR$ and $\sigma\in M^{\infty,r}_{1\otimes v_{|s|}}(\rdd)$. Then, $Op_w:\cS(\rd)\to\cS'(\rd)$ extends to a bounded operator on $\cM^{p,q}_{v_s}(\rd)$.
	\end{proposition}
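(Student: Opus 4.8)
The plan is to reduce the statement to a boundedness estimate, on a weighted mixed‑norm sequence space, for the Gabor matrix of $Op_w(\sigma)$. Fix $\varphi\in\cS(\rd)\setminus\{0\}$ and a lattice $\Lambda=\alpha\bZ^d\times\beta\bZ^d$ with $\alpha\beta$ small enough that $\cG(\varphi,\Lambda)$ is a Gabor frame for $L^2(\rd)$ with a dual window $\gamma\in\cS(\rd)$, chosen so that $\cG(\varphi,\Lambda)$ provides simultaneous atomic decompositions of all $M^{p,q}_{v_s}(\rd)$, $0<p,q\le\infty$: the analysis map $f\mapsto(V_\gamma f(\lambda))_{\lambda\in\Lambda}$ and the synthesis map $(c_\lambda)\mapsto\sum_\lambda c_\lambda\pi(\lambda)\varphi$ are bounded between $M^{p,q}_{v_s}(\rd)$ and $\ell^{p,q}_{v_s}(\Lambda)$, and $f=\sum_\lambda V_\gamma f(\lambda)\,\pi(\lambda)\varphi$ for $f\in\cM^{p,q}_{v_s}(\rd)$ (standard Gabor/atomic‑decomposition theory, cf. \cite{GrochenigBook}, valid also in the quasi‑Banach range $\min\{p,q\}<1$; note $\gamma\in\cS(\rd)$ because $\varphi\in\cS(\rd)$). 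Expanding $f\in\cS(\rd)$ by this frame and using linearity of $Op_w(\sigma)$, the coefficient sequence of $Op_w(\sigma)f$ is $(Mc)_\lambda$ with $c_\mu=V_\gamma f(\mu)$ and
\[
	M_{\lambda,\mu}:=\langle Op_w(\sigma)\pi(\mu)\varphi,\pi(\lambda)\gamma\rangle,\qquad\lambda,\mu\in\Lambda ;
\]
hence it suffices to show $\|Mc\|_{\ell^{p,q}_{v_s}(\Lambda)}\lesssim\|\sigma\|_{M^{\infty,r}_{1\otimes v_{|s|}}}\|c\|_{\ell^{p,q}_{v_s}(\Lambda)}$ and then to conclude by density of $\cS(\rd)$ in $\cM^{p,q}_{v_s}(\rd)$.

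The core step is an almost‑diagonalization of $M$. By the definition of $Op_w$, $M_{\lambda,\mu}=\langle\sigma,W(\pi(\lambda)\gamma,\pi(\mu)\varphi)\rangle$, where $W=W_{A_{1/2}}$ is the classical cross‑Wigner distribution. By its shift‑covariance, for $u,v\in\rdd$ the distribution $W(\pi(u)\psi,\pi(v)\phi)$ equals $W(\psi,\phi)$ translated in phase space by $\tfrac{u+v}{2}$ and modulated by $J(u-v)$ (with $J=J_d$ as in (\ref{defJ})), up to a unimodular constant; taking $\psi=\gamma$, $\phi=\varphi$ and writing $\Phi:=W(\gamma,\varphi)\in\cS(\rdd)$ gives
\[
	|M_{\lambda,\mu}|=\Big|V_\Phi\sigma\Big(\tfrac{\lambda+\mu}{2},\,J(\lambda-\mu)\Big)\Big|,\qquad\lambda,\mu\in\Lambda .
\]
Taking the supremum over the first (space) slot of $V_\Phi\sigma$ removes the dependence on $\lambda+\mu$, so $|M_{\lambda,\mu}|\le H(\lambda-\mu)$ with $H(\nu):=\sup_{w\in J\nu+Q}\ \sup_{Z\in\rdd}|V_\Phi\sigma(Z,w)|$ for a fixed compact box $Q\subseteq\rdd$ absorbing both the lattice and the (invertible) action of $J$. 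Since $\sigma\in M^{\infty,r}_{1\otimes v_{|s|}}(\rdd)$ and $\Phi\in\cS(\rdd)$, the amalgam‑space (discrete) form of the modulation‑space (quasi‑)norm yields $H\in\ell^r_{v_{|s|}}(\bZ^{2d})$ with $\|H\|_{\ell^r_{v_{|s|}}}\lesssim\|\sigma\|_{M^{\infty,r}_{1\otimes v_{|s|}}}$: the $L^\infty$‑direction of $M^{\infty,r}$ matches the suppressed $Z$‑variable, the $L^r$‑direction with weight $v_{|s|}$ matches the $w$‑variable, and $v_{|s|}\circ J\asymp v_{|s|}$ because $J$ is orthogonal.

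Finally one runs a Young--Schur estimate on sequences. As $v_s$ is $v_{|s|}$‑moderate, $v_s(\lambda)\lesssim v_{|s|}(\lambda-\mu)\,v_s(\mu)$, whence
\[
	\big|(Mc)_\lambda\big|\,v_s(\lambda)\lesssim\sum_{\mu\in\Lambda}\big(Hv_{|s|}\big)(\lambda-\mu)\,\big(|c|v_s\big)(\mu)=\big((Hv_{|s|})*(|c|v_s)\big)(\lambda),
\]
with $Hv_{|s|}\in\ell^r(\Lambda)$ and $|c|v_s\in\ell^{p,q}(\Lambda)$. For $r=\min\{1,p,q\}$ one has the convolution embedding $\ell^r(\bZ^{2d})*\ell^{p,q}(\bZ^{2d})\hookrightarrow\ell^{p,q}(\bZ^{2d})$: when $p,q\ge1$ this reduces, via $\ell^r\hookrightarrow\ell^1$, to the ordinary mixed‑norm Young inequality, and when $\min\{p,q\}<1$ it follows from the fact that $\ell^a$ is a quasi‑Banach convolution algebra for $0<a\le1$ together with $\ell^r\hookrightarrow\ell^{\min\{p,q\}}$. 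Therefore $\|Mc\|_{\ell^{p,q}_{v_s}}\lesssim\|H\|_{\ell^r_{v_{|s|}}}\|c\|_{\ell^{p,q}_{v_s}}\lesssim\|\sigma\|_{M^{\infty,r}_{1\otimes v_{|s|}}}\|c\|_{\ell^{p,q}_{v_s}}$, and transporting this bound back through the Gabor synthesis and analysis maps shows that $Op_w(\sigma)$ extends to a bounded operator on $\cM^{p,q}_{v_s}(\rd)$.

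The step I expect to be the main obstacle is the almost‑diagonalization: one must carefully extract, from the covariance of the cross‑Wigner distribution, the phase factor and the linear substitution $(\lambda,\mu)\mapsto\big(\tfrac{\lambda+\mu}{2},J(\lambda-\mu)\big)$; and, more substantially, the passage from the continuous bound $V_\Phi\sigma\in L^{\infty,r}_{1\otimes v_{|s|}}(\rdd\times\rdd)$ to the \emph{summable sampled} kernel $H\in\ell^r_{v_{|s|}}(\Lambda)$ must use the Wiener‑amalgam description of the $M^{\infty,r}_{1\otimes v_{|s|}}$ (quasi‑)norm (a local‑maximal‑function argument), which is precisely where the hypothesis $\sigma\in M^{\infty,r}$ — rather than merely $V_\Phi\sigma\in L^{\infty,r}$ — is used, and one must check that the tensor weight $1\otimes v_{|s|}$ behaves correctly under the substitution.
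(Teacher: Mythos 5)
The paper does not prove Proposition \ref{propWBC} at all: it is quoted as a known result with the citation \cite{GrochenigBook, Toft1, Toft2}, so there is no internal proof to compare against. Your argument is essentially the standard proof from that literature (almost-diagonalization of the Gabor matrix $|\langle Op_w(\sigma)\pi(\mu)\varphi,\pi(\lambda)\gamma\rangle|=|V_\Phi\sigma(\tfrac{\lambda+\mu}{2},J(\lambda-\mu))|$ with $\Phi=W(\gamma,\varphi)$, followed by a weighted convolution estimate on $\ell^{p,q}_{v_s}$), and its skeleton is sound, including in the quasi-Banach range. The points you flag are indeed the ones that need external input rather than new ideas: the Schwartz-class dual window for a Schwartz Gabor frame window, the amalgam/sampling step upgrading $V_\Phi\sigma\in L^{\infty,r}_{1\otimes v_{|s|}}$ to the sampled envelope $H\in\ell^r_{v_{|s|}}$, the mixed-norm inequality $\ell^r\ast\ell^{p,q}\hookrightarrow\ell^{p,q}$ for $r=\min\{1,p,q\}$, and the frame characterization of $M^{p,q}_{v_s}$ by lattice samples of the STFT (needed both to read off $\norm{Op_w(\sigma)f}_{M^{p,q}_{v_s}}$ from the coefficient sequence and to justify the series manipulations before extending by density to $\cM^{p,q}_{v_s}(\rd)$); all of these are standard and available in the cited references, so your proposal amounts to a correct reconstruction of the proof the paper delegates to them.
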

	
	\begin{theorem}
		Assume that $\cB_\cA$ defined as in (\ref{defBA}) is invertible. For $0<p,q\leq\infty$, set $r=\min\{1,p,q\}$. If $a\in M^{\infty,r}_{1\otimes v_s}(\rdd)$, $s\geq0$, then $Op_\cA(a):\cS(\rd)\to\cS'(\rd)$ extends to a bounded operator on $\cM^{p,q}_{v_s}(\rd)$.
	\end{theorem}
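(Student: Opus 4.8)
The plan is to reduce the boundedness of $Op_\cA(a)$ on $\cM^{p,q}_{v_s}(\rd)$ to the already-known boundedness of the Weyl quantization $Op_w$ on $\cM^{p,q}_{v_s}(\rd)$ stated in Proposition \ref{propWBC}, by exploiting the change-of-quantization formula of Proposition \ref{propCambioQuant}. Concretely, by Proposition \ref{propCambioQuant} we have $Op_\cA(a)=Op_w(b)$ with $b=\mu(A_{1/2})\mu(\cA)^{-1}(a)$, so it suffices to show that whenever $a\in M^{\infty,r}_{1\otimes v_s}(\rdd)$, the transformed symbol $b$ lies in $M^{\infty,r}_{1\otimes v_{|s|}}(\rdd)=M^{\infty,r}_{1\otimes v_s}(\rdd)$ (the two coincide since $s\geq0$), after which Proposition \ref{propWBC} finishes the argument.

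The heart of the matter is therefore the mapping property $\mu(A_{1/2})\mu(\cA)^{-1}:M^{\infty,r}_{1\otimes v_s}(\rdd)\to M^{\infty,r}_{1\otimes v_s}(\rdd)$, i.e.\ that the metaplectic operator $\mu(\cM)$ with $\cM:=A_{1/2}\cA^{-1}\in Sp(2d,\bR)$ preserves this particular mixed-norm modulation space with a weight acting only on the frequency variables. This is exactly where the hypothesis that $\cB_\cA$ from (\ref{defBA}) is invertible enters. Recall from (\ref{Cov1dec})–(\ref{defBA}) (and the surrounding discussion in Section \ref{sec:CC}) that the block structure governing covariance is $\cA=V_{B_\cA}^TA_{1/2}$ when $W_\cA$ is covariant; more generally the invertibility of $\cB_\cA$ is precisely the condition that makes $\cM=A_{1/2}\cA^{-1}$ a \emph{free} symplectic matrix, so that $\mu(\cM)$ is, up to chirps and a dilation, a partial Fourier transform $\cF_2$ in the sense of Example \ref{es22}(4)–(5). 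Since $\cF_2$ interchanges the two coordinate blocks of $\rdd$, the weight $1\otimes v_s$ is carried to $v_s\otimes 1$ (or back), and then one invokes the known invariance of $M^{\infty,r}_m$-type spaces under metaplectic operators of the elementary types: dilations $\cD_L$ preserve all $M^{p,q}_m$ up to equivalent weights (and $r=\min\{1,p,q\}$ is preserved), multiplication by a chirp $\Phi_C$ is a metaplectic operator that acts continuously on modulation spaces by Theorem \ref{thm213} / \cite[Theorem 2.13]{CR2021} and does not affect the relevant weight behaviour, and the partial Fourier transform swaps $M^{\infty,r}_{1\otimes v_s}$ with $M^{r,\infty}_{v_s\otimes 1}$-type spaces. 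Assembling these pieces shows $b\in M^{\infty,r}_{1\otimes v_s}(\rdd)$.

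The main obstacle I anticipate is bookkeeping rather than conceptual: one must track carefully how the \emph{mixed} exponents $(\infty,r)$ and the \emph{one-sided} weight $1\otimes v_s$ transform under the factorization of $\cM=A_{1/2}\cA^{-1}$ into $V_C$, $\cA_{FT2}$, and $\cD_L$ pieces, making sure that at each stage the hypotheses of the cited invariance results are met (in particular that $r=\min\{1,p,q\}$ is exactly the index for which $\cF_2$-type operators remain bounded between the relevant modulation spaces, which is why Proposition \ref{propWBC} is stated with this $r$). A secondary point to handle is that $Op_\cA(a)$ is a priori only defined on $\cS(\rd)$; one checks that the composition $Op_w(b)$ agrees with it there (this is immediate from Proposition \ref{propCambioQuant}) and then the continuous extension to $\cM^{p,q}_{v_s}(\rd)$ is inherited from Proposition \ref{propWBC}, using the density of $\cS(\rd)$ in $\cM^{p,q}_{v_s}(\rd)$. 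Once the symbol class is shown to be preserved, the proof is complete; one may record as a remark that when $p,q\geq1$ this recovers, via $\cA=A_{1/2}$, the classical Weyl result, and that for $\cA$ covariant the invertibility of $\cB_\cA$ is the natural non-degeneracy hypothesis consistent with Section \ref{sec:CC}.
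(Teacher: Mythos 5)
Your opening reduction is, in substance, the paper's: in the covariant setting where $\cA=V_{B_\cA}^TA_{1/2}$ (formula (\ref{Cov1dec})), the change of quantization of Proposition \ref{propCambioQuant} gives $Op_\cA(a)=Op_w(b)$ with $b=\mu(A_{1/2}\cA^{-1})a=\mu(V_{-B_\cA}^T)a=a\ast\cF^{-1}\Phi_{B_\cA}$ up to constants (Example \ref{es22}(3)), and the conclusion is then to be drawn from Proposition \ref{propWBC}. The genuine gap is in the step you yourself identify as the heart of the matter, namely that $b\in M^{\infty,r}_{1\otimes v_s}(\rdd)$. Your plan is to factor $\mu(A_{1/2}\cA^{-1})$ into chirps, a dilation and a (partial) Fourier transform and to check invariance of the symbol class factor by factor, citing Theorem \ref{thm213} for the chirp factors. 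This cannot work as stated: Theorem \ref{thm213} concerns only the spaces $M^p_{v_s}$ with equal exponents and says nothing about $M^{\infty,r}_{1\otimes v_s}$, and metaplectic operators do \emph{not} in general preserve mixed-norm, one-sidedly weighted modulation spaces. Concretely, multiplication by a non-degenerate chirp $\Phi_C$ already fails: $1\in M^{\infty,r}_{1\otimes v_s}(\rdd)$ since $\sup_x|V_g1(x,\xi)|=|\hat g(\xi)|$, whereas $|V_g\Phi_C(x,\xi)|$ depends only on $\xi-Cx$, so $\sup_x|V_g\Phi_C(x,\xi)|$ is a positive constant in $\xi$ and $\Phi_C\notin M^{\infty,1}(\rdd)\supseteq M^{\infty,r}_{1\otimes v_s}(\rdd)$. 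Since the free-matrix factorization of Example \ref{es22}(4) for $V_{-B_\cA}^T$ consists precisely of such chirp multiplications (composed with a full, not partial, Fourier transform and a dilation), the individual factors leave the symbol class even though the composite does not; the bookkeeping you propose cannot be completed.

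The repair is to estimate the composite operator directly, which is what the paper does: the invertibility of $B_\cA$ is used not to make $A_{1/2}\cA^{-1}$ free per se, but to guarantee that $\cF^{-1}\Phi_{B_\cA}$ is again a chirp times a constant and hence belongs to $M^{1,\infty}(\rdd)$ (\cite[Lemma 5.2]{CGR2022}); then the convolution relation $M^{\infty,r}_{1\otimes v_s}\ast M^{1,\infty}\hookrightarrow M^{\infty,r}_{1\otimes v_s}$ (\cite[Proposition 3.1]{BCN20}) yields $b=a\ast\cF^{-1}\Phi_{B_\cA}\in M^{\infty,r}_{1\otimes v_s}(\rdd)$, and Proposition \ref{propWBC} finishes exactly as you intend. (Equivalently, the paper writes $\langle Op_\cA(a)f,g\rangle=\langle a, W(g,f)\ast\cF^{-1}\Phi_{-B_\cA}\rangle=\langle Op_w(a\ast\cF^{-1}\Phi_{B_\cA})f,g\rangle$ via the Cohen-class identity (\ref{covCohen}).) A secondary point: your assertion that invertibility of $B_\cA$ makes $A_{1/2}\cA^{-1}$ free ``more generally'' is unsupported outside the covariant case; you should state explicitly, as the paper implicitly does, that the argument takes place for covariant $\cA$, where $A_{1/2}\cA^{-1}=V_{-B_\cA}^T$ and its upper-right block is $-B_\cA$.
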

	\begin{proof}
		Observe that $\cF^{-1}\Phi_{B_\cA}(z,\zeta)\in M^{1,\infty}(\rdd)$ by \cite[Lemma 5.2]{CGR2022}. 
		Also, $W_\cA$ is covariant by Theorem \ref{charcovidddi} with $W_\cA(f,g)=W(f,g)\ast\cF^{-1}(\Phi_{-B_{\cA}})$. Therefore,
		\begin{align*}
			\langle Op_\cA(a)f,g\rangle&=\langle a,W_\cA(g,f)\rangle=\langle a,W(g,f)\ast\cF^{-1}\Phi_{-B_\cA}\rangle\\
			&=\langle \hat a,\cF(W(g,f))\Phi_{-B_\cA}\rangle=\langle \hat a\Phi_{B_\cA},\cF(W(g,f))\rangle\\
			&=\langle a\ast \cF^{-1}\Phi_{B_\cA},W(g,f)\rangle=\langle Op_w(a\ast \cF^{-1}\Phi_{B_\cA})f,g\rangle.
		\end{align*}
		By the convolution inequalities for modulation spaces, cf. \cite[Proposition 3.1]{BCN20}, we have
		\[
			\norm{a\ast\cF^{-1}\Phi_{B_\cA}}_{M^{\infty,r}_{1\otimes v_s}}\lesssim\norm{a}_{M^{\infty,r}_{1\otimes v_s}}\norm{\cF^{-1}\Phi_{B_\cA}}_{M^{1,\infty}},
		\]
		and the conclusion follows by Proposition \ref{propWBC}.
	\end{proof}

\section*{Conclusions and Open Problems}
We illustrated how Metaplectic Wigner distributions are defined as natural generalizations of the classical Wigner distribution in the metaplectic framework. As aforementioned, they seem to be the key subject to fully understand the nature of time-frequency analysis, but many of the topics exposed in this survey are still poorly understood. Which spaces can be written as $M^{p,q}_{\cA,v_s}(\rd)$, cf. Remark \ref{rmark}? For instance, it is clear that Wiener spaces can be represented as this kind of spaces too. The characterization of modulation spaces in terms of shift-invertibility (namely, Theorem \ref{thrm} $(iic)$) left another open question: how can the definition of Gabor frames be restated for (shift-invariant) metaplectic Wigner distributions instead of the STFT? We have seen that Lieb's uncertainty principle admits different bounds, in the setting of Wigner-decomposability. Since decomposability is just a first generalization of the Wigner distribution, one may wonder how far it is possible to go to minimize the effect of uncertainty principles with the employment of more sophisticated metaplectic time-frequency representations. 

\section*{Acknowledgements}
The author is partially founded by HES-SO School of Engineering and he is member of the \textit{Gruppo Nazionale per l’AnalisiMatematica, la Probabilità e le loro Applicazioni} (GNAMPA) of the \textit{Istituto Nazionale di Alta Matematica} (INdAM) and \textit{The Sense Institute of Innovation}, (Lausane/Sion). The author would like to thank Prof. E. Cordero and Prof. L. Rodino for Their suggestions and contributions to this work.

\end{document}